 \newcommand{\indep}{\perp \!\!\! \perp} 
 \newcommand{\defeq}{\coloneqq}
\begin{document}
\title{Causality for VARMA processes with instantaneous effects: The global Markov property, faithfulness and instrumental variables}
\titlerunning{Causality for VARMA processes with instantaneous effects}
%
\author{Ignacio Gonz\'alez-P\'erez }
\authorrunning{I. Gonz\'alez-P\'erez }
%
\institute{Department of mathematics, \'Ecole Polytechnique F\'ed\'erale de Lausanne \\ 
\email{ignacio.gonzalezperez@epfl.ch}}
\maketitle              
\begin{abstract}
Causal  reasoning has gained great attention over the last half century as it allows (or at least intends) to answer questions which go above those within the capabilities of classical inferential statistics using just observational data. So far, causal research has been focused mostly on the i.i.d.  setting. However, many are the situations where  there exists a non-trivial dependence structure between sequential observations. Motivated by this fact, the main purpose of this work is to study causal properties of time series under the structural assumption of a VARMA model with instantaneous effects. First, the global Markov property is studied, building on existing work for VAR processes without instantaneous effects. Infinite graphs which represent the dependencies of the process are defined so that separation statements translate to conditional independencies in the stationary distribution of the process. Second, faithfulness is examined as a counterpart of this Markov property. Conditions are given so that the stationary distribution of the process is almost surely faithful to said infinite graphs.  In addition, an instrumental variable regression framework is developed for VARMA models with instantaneous effects. This allows to identify and consistently estimate total causal effects. 
\keywords{VARMA processes \and Global Markov property \and Faithfulness \and Instrumental variables \and Causality.}
\end{abstract}
\textbf{Note: } Most of this work was done while the author was writing his MSc thesis at ETH Zurich.
\newpage
\section{Introduction}\label{sec:Intro}
A wide range of statistical methods rely on the assumption of independent observations. The factorization of the joint distribution opens a wide range of possibilities for statistical modelling and inferential methods. However, plenty are the situations where the observed data cannot be considered independent: A non-trivial correlation structure must be assumed. When the data can be understood as observed under a discrete time structure, we enter the realm of time series. A comprehensive introduction to time series modelling can be found in \cite{brockwell1991time}. One of the simplest yet versatile parametric models for time series are VARMA models. Aiming to model the expectation of the process conditional on the past, they assume an additive structure of a finite number of past observations and innovations.  Conditions over the coefficients to achieve properties of these processes, such as stationarity and invertibility, can be found in \cite{hamilton2020time}, together with methods for fitting such parametric models to data.

However, classical statistical inference methods might not be able to answer some scientific questions of interest. For example, when we want to model how a response variable $Y$ reacts to an intervention on $X$, observational statements are not enough: Causal knowledge is required. The field of  causality is a relatively new one, with seminal work usually being attributed to \cite{pearl2000causality} and \cite{spirtes2001causation}. Since then, the field has branched in multiple directions. Causal discovery methods rely on different assumptions to try and recover the true causal structure (or at least its Markov equivalence class). Classical regression methods are in some contexts used to estimate quantities which have  well-defined causal interpretations. These are usually referred to as causal effects.    An introduction to the field of causality can be found in \cite{peters2017elements}. Applicability of causal thinking has been found in multiple fields outside of statistics, such as medicine \cite{rizzi1992causality}, ecology \cite{kimmel2021causal}, psychology \cite{bolger2019causal}, etc.

Most of the existing causal methods rely on the assumption of i.i.d. data. For time series, the most wide-spread framework of causal reasoning is Granger causality, an overview of which can be found in \cite{shojaie2022granger} or in \cite[Sec. 10.3.3]{peters2017elements}. However,  causality for time series outside  the Granger formalism is quite primitive at the time of writing. 
 In \cite{thams2022identifying}  the global Markov property is proven for VAR processes without instantaneous effects, and develop an instrumental variable regression formalism to identify causal effects.  A causal discovery algorithm is developed  
 in \cite{hyvarinen2010estimation}  for VAR processes with instantaneous effects based on the non-Gaussianity assumptions of \cite{shimizu2006linear}. A method to estimate causal effects in non-linear time series and differentiate whether they are instantaneous or lagged was presented in \cite{runge2020discovering}. Contemporaneous to the writing of this work, \cite{hochsprungglobal}  proposed a proof of the global Markov property for a wide class of time series under the assumptions of strict stationarity and mixing (among others). The main purpose of this work is to study causal properties of time series, mostly under the linearity and additive noise assumption of a VARMA model with instantaneous effects. Not only will we study Markov properties, but also dive into causal inference and causal discovery, proposing applicable algorithms whenever possible. 

Therefore, this work is organized as follows: Section \ref{sec:Preliminaries} is dedicated to introduce definitions, graph terminology, auxiliary results, and most notably the global Markov property for VAR processes without instantaneous effects. In Section \ref{sec:GMPVar}  we  study the global Markov property for VAR processes with instantaneous effects, as an extension to \cite[Thm. 1]{thams2022identifying}. The family of processes considered is expanded to VARMA models in Section \ref{sec:GMPVarma}, which is again dedicated to the global Markov property. As  a counterpart of the Markov property, we study in Section \ref{chap:faithfulness} under which conditions the stationary distribution of a VARMA model is faithful to the full-time graph of the process. With these tools at hand, causal inference techniques  are developed in Section \ref{chap:IV}. An instrumental variable regression framework is designed to identify and consistently estimate total causal effects for VARMA processes with instantaneous effects. Finally, a discussion of the achievements, shortcomings and possible extensions of this work is presented in Section \ref{sec:Conclusion}. 
\section{Preliminaries}\label{sec:Preliminaries}
This section begins by introducing graph terminology and separation characterizations for directed acyclic graphs and acyclic directed mixed graphs. Most of the proofs of  the results presented in this chapter can be found in the given references. However, we have stated some results which we formulated to cater  the specific needs of the theory developed in this work. The proofs of these results can be found in Appendix \ref{sec:AppendixProofs}. Therefore, a reader with some previous experience in the field of causality might consider skipping Sections \ref{sec:PrelimDAG} and \ref{sec:PrelimADMG}. Following, in Section \ref{sec:GmpVARWithout} we introduce the global Markov property for VAR processes without instantaneous effects \cite[Thm. 1]{thams2022identifying}, which will then be extended in Section \ref{sec:GMP}.
\subsection{Directed acyclic graphs}\label{sec:PrelimDAG}
The first type of graphs we will consider are directed acyclic graphs. These graphs have only directed edges, and have no cycles.
\begin{definition}[Directed acyclic graph \texorpdfstring{\cite[Sec. 2.1.1]{lauritzen1996graphical}})] 
    A directed graph is a pair $\mathcal{G}=(V,E)$, with $V$ a set of nodes and $E\subset V\times V$. We represent this graph by drawing directed edges $v_i\to v_j$ if $(v_i,v_j)\in E$. A path in $\mathcal{G}$ is an alternating sequence $\mathfrak{p} =(v_1,e_1,v_2,\ldots,e_{n-1},v_n)$ of distinct vertices $v_i$ and edges $e_i$ such that $v_i$ and $v_{i+1}$ are connected by $e_i$. We say that $\mathfrak{p}$ is a directed path from $v_1$ to $v_n$ if from every $i$, $e_i$ points from $v_i$ to $v_{i+1}$. We say that a directed graph is acyclic (and we call it a directed acyclic graph (DAG)) if there is no directed path between two nodes $v_1$ to $v_n$ such that also $(v_n,v_1)\in E$. 
\end{definition}
\begin{definition}[Descendants, children, ancestors and parents of a node in a DAG \texorpdfstring{\cite[Sec. 2.1.1]{lauritzen1996graphical}})]\label{def:DescendantsDAG}
    Let $\mathcal{G}$ be a DAG over nodes $V$. We say that $v_j$ is a descendant of $v_i$ if there is a directed path in $\mathcal{G}$ from $v_i$ to $v_j$. We denote the set of all descendants of $v_i$ as $DE_{\mathcal{G}}(v_i)\subset V$. In this context we also say $v_i$ is an ancestor of $v_j$. We denote the set of ancestors of  $v\in V$ by $AN_{\mathcal{G}}(v)$. By convention, we assume that a node is an ancestor and a descendant of itself. Furthermore, if $(v_i,v_j)\in E$, we say that $v_i$ is a parent of $v_j$ and that $v_j$ is a child of $v_i$. We denote the sent of parents of $v\in V$ as $PA_{\mathcal{G}}(v)$.
\end{definition}
\begin{definition}[Collider in a path of a DAG \texorpdfstring{\cite[Sec. 2.1.1]{lauritzen1996graphical}})]
    Let $\mathcal{G}$ be a DAG over nodes $V$, and $\mathfrak{p} =(v_1,e_1,v_2,\ldots,e_{n-1},v_n)$ a path in $\mathcal{G}$. We say that $v_i$ is a collider in $\mathfrak{p}$ if $(e_{i-1},v_i,e_i)$ is of the form $\to v_i \leftarrow $. 
\end{definition}
One of the  properties in a directed acyclic graph that we will be interested in is $d$-separation, which speaks about connection in a directed graph.
\begin{definition}[$d$-separation in a DAG \texorpdfstring{\cite[Sec. 2.1.1]{lauritzen1996graphical}})]
    Let $\mathcal{G}$ be a DAG over nodes $V$. Given two distinct nodes $v,w\in V$ and $B\subset V\backslash\{v,w\}$, we say that a path $\mathfrak{p}$ between $v$ and $w$ is blocked by $B$ if one of the following holds:
    \begin{enumerate}
        \item There is a non-collider in $\mathfrak{p}$ which is  in $B$,
        \item there is a collider $v$ in the path such that $DE_{\mathcal{G}}(v)\cap B=\emptyset$.
    \end{enumerate}
    Given $A,B,C\subset V$ finite pairwise disjoint sets of nodes, we say that $A$ and $C$ are $d$-separated by $B$ in $\mathcal{G}$  (and we denote it by $A\perp_{\mathcal{G}} C|B$) if every path from a node in $A$ to a node in $C$ is blocked by $B$.
\end{definition}
A characterization of the $d$-separation statements on a DAG was given by \cite{lauritzen1990independence} based on a translation of the DAG to an undirected graph called the moralized graph of a DAG.
\begin{definition}[Moralized graph of a DAG \texorpdfstring{\cite[Sec. 2.1.1]{lauritzen1996graphical}})]\label{def:MoralizedDAG}
    Given a DAG $\mathcal{G}$, we construct its moralized graph, and denote it by $\mathcal{G}^m$, by drawing an edge between every pair of nodes which have a common child, and then removing the direction of all edges.  
\end{definition}
\begin{proposition}[\cite{lauritzen1990independence}]\label{prop:SeparationMoralized}
    Let $A,B,C$ be pairwise disjoint sets of nodes of a DAG $\mathcal{G}$. Then $A$ and $C$ are $d$-separated by $B$ in $\mathcal{G}$ if and only if $A$ and $C$ are separated by $B$ in $(\mathcal{G}_{AN_{\mathcal{G}}(A\cup B \cup C)})^m$. By $\mathcal{G}_W$ we denote the proper sub-DAG of $\mathcal{G}$ over nodes $W$. 
\end{proposition}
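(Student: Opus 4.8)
The plan is to prove the two implications separately, and to pass to the contrapositive of each so that throughout I reason about \emph{connecting} objects: I must show that $A$ and $C$ are joined by a path in the moral graph $(\mathcal{G}_{AN_{\mathcal{G}}(A\cup B\cup C)})^m$ avoiding $B$ if and only if $A$ and $C$ are joined by a $d$-connecting path (a path not blocked by $B$) in $\mathcal{G}$. To lighten notation I abbreviate the ancestral set by $W=AN_{\mathcal{G}}(A\cup B\cup C)$ and write $\mathcal{G}'=\mathcal{G}_W$.

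First I would prove the auxiliary lemma that every node lying on a $d$-connecting path between $A$ and $C$ belongs to $W$. Fix such a path $\mathfrak{p}=(v_1,\dots,v_n)$ with $v_1\in A$ and $v_n\in C$. Every collider $v_i$ on $\mathfrak{p}$ is non-blocking, so by the definition of blocking $DE_{\mathcal{G}}(v_i)\cap B\neq\emptyset$, giving $v_i\in AN_{\mathcal{G}}(B)\subset W$. For any other node $v_i$, at least one of the two incident path-edges points away from $v_i$; following that edge as far as possible produces a directed sub-path of $\mathfrak{p}$ that terminates either at an endpoint $v_1,v_n$ or at a collider, so $v_i$ is an ancestor of a node already known to lie in $W$, whence $v_i\in W$. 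This lemma confines all subsequent path surgery to $\mathcal{G}'$.

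For the direction ``$d$-connection implies moral connection'' I would take a $d$-connecting path $\mathfrak{p}$, now known to live in $\mathcal{G}'$, and convert it into an undirected walk in $(\mathcal{G}')^m$: each non-collider edge is retained as an undirected edge, while for each collider $v_{i-1}\to v_i\leftarrow v_{i+1}$ the two neighbours $v_{i-1},v_{i+1}$ are parents of the common child $v_i$ in $\mathcal{G}'$ and are therefore joined by a moralizing edge, which I use to bypass $v_i$. The resulting walk visits only non-colliders of $\mathfrak{p}$ (endpoints cannot be colliders), and by condition (1) of the blocking definition none of these lie in $B$, so $A$ and $C$ are connected in $(\mathcal{G}')^m$ while avoiding $B$.

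The reverse direction, ``moral connection implies $d$-connection,'' is the main obstacle. Given a connecting path in $(\mathcal{G}')^m$ avoiding $B$, I would replace each original edge by its directed version in $\mathcal{G}$ and expand each moralizing edge $u-w$ into $u\to z\leftarrow w$ through a common child $z\in W$, creating a collider at $z$. The difficulty is that this collider is active only when $DE_{\mathcal{G}}(z)\cap B\neq\emptyset$, that is when $z\in AN_{\mathcal{G}}(B)$, whereas membership in $W$ only guarantees $z\in AN_{\mathcal{G}}(A\cup B\cup C)$. I would resolve this by following from $z$ a directed path until it first meets $A\cup B\cup C$ at a node $t$: if $t\in B$ the collider at $z$ is active and the expansion is legitimate, whereas if $t\in A\cup C$ this directed path avoids $B$ entirely and lets me re-route the walk so that it terminates at $t$, using $z$ and the nodes on the way to $t$ as non-colliders, again yielding a $d$-connecting path between $A$ and $C$. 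Arranging these expansions and re-routings so that the two global endpoints still lie in $A$ and $C$ is the delicate bookkeeping, which I expect to handle by induction on the length of the moral path; this is where essentially all of the care in the argument is concentrated.
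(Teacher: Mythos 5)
Your strategy is the same one the paper uses: the paper itself cites this DAG statement from \cite{lauritzen1990independence} and instead proves its ADMG generalization (Proposition~\ref{prop:SeparationADMG}) in Appendix~\ref{sec:AppendixProofs}, and that proof has exactly your two-step shape. Your auxiliary lemma (every node on a $d$-connecting path lies in $AN_{\mathcal{G}}(A\cup B\cup C)$: colliders are ancestors of $B$, other nodes are ancestors of an endpoint or of a collider) and your forward direction (bypass each collider through the moralizing edge between its two parents, leaving a $B$-avoiding path of non-colliders) are complete and match the paper's argument.

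The converse direction, however, contains a genuine gap, and it sits precisely where you say ``this is where essentially all of the care in the argument is concentrated'': that care is never carried out. Two things are missing. The smaller one: colliders in the expanded path need not only occur at the common children $z$ introduced by expanding moralizing edges; a node $v$ of the moral path itself becomes a collider whenever both of its incident edges are original edges pointing into it ($u\to v\leftarrow w$). The paper dispenses with these by a preliminary reduction: such a $v$ forces $u$ and $w$ to be married, so the path can be shortcut past $v$, and one may assume all nodes of the moral path are non-colliders in the expanded path. The larger one is the endpoint bookkeeping itself. Your proposed tool, induction on the length of the moral path, does not obviously get off the ground: after one re-routing, the object in hand is a directed descendant path concatenated with a suffix of the expanded path, which is neither a moral path nor shorter in any controlled sense (the descendant detour from $z$ to $t$ can be arbitrarily long), so the induction hypothesis does not apply to it. The paper avoids induction altogether and closes this step with a direct case analysis: every collider without a descendant in $B$ has a $B$-avoiding directed path to $A$ or to $C$; if all such colliders lead to $C$, traverse from $A$ and divert to $C$ at the first one; if some lead to $A$, let $\delta$ be the \emph{last} such collider encountered when traversing from $A$ to $C$, enter the path at $\delta$ along its descendant path from $A$, and continue toward $C$, diverting at the next bad collider, which then necessarily leads to $C$. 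Either this case analysis, or an equivalent terminating scheme (e.g., a greedy iteration that always keeps the $C$-side suffix, decreasing the number of bad colliders at each step), has to be written out; as it stands, your proof asserts the conclusion of its hardest step rather than proving it.
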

\begin{restatable}{proposition}{propExtension}\label{prop:Extension}
    Consider a DAG $\mathcal{G}$ over nodes $V$. Let $A,B,C\subset V$ pairwise disjoint sets of nodes such that $V=AN_{\mathcal{G}}(A\cup B\cup C)$ and $A\perp_{\mathcal{G}}C|B$. Then $A$ and $C$ can be extended to disjoint sets of nodes which fill $V$ and remain $d$-separated given $B$. More precisely, there exists $A^+,C^+\subset V$ such that $A\subseteq A^+$, $C\subseteq C^+$, $A^+\cap C^+=\emptyset$, $A^+\perp_{\mathcal{G}}C^+|B$ and $V=A^+\cup B \cup C^+$. 
\end{restatable}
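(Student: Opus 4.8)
The plan is to collapse the whole statement to a combinatorial fact about connected components in a single undirected graph, using the moralization characterization of $d$-separation from Proposition \ref{prop:SeparationMoralized}. Since $V=AN_{\mathcal{G}}(A\cup B\cup C)$, the relevant ancestral sub-DAG is $\mathcal{G}_V=\mathcal{G}$ itself, so the hypothesis $A\perp_{\mathcal{G}}C\mid B$ is, by Proposition \ref{prop:SeparationMoralized}, equivalent to $A$ and $C$ being separated by $B$ in the fixed moralized graph $\mathcal{G}^m$. The advantage of passing to $\mathcal{G}^m$ is that separation in an undirected graph has a transparent component structure, whereas arguing directly with $d$-separation would force one to track colliders along extended paths.

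First I would form the induced undirected subgraph $H\defeq(\mathcal{G}^m)_{V\setminus B}$ obtained by deleting $B$, and list its connected components. Because $A,B,C$ are pairwise disjoint, both $A$ and $C$ lie entirely in $V\setminus B$, so each node of $A\cup C$ sits in some component of $H$. The key observation is that no single component can contain both a node of $A$ and a node of $C$: such a component would supply a path in $H$, hence a $B$-avoiding path in $\mathcal{G}^m$, joining $A$ to $C$, contradicting their separation by $B$. Thus the components split cleanly into those meeting $A$, those meeting $C$, and neutral ones meeting neither.

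Next I would define $A^+$ to be the union of all components that meet $A$, and set $C^+\defeq(V\setminus B)\setminus A^+$, which is exactly the union of the remaining components (those meeting $C$ together with the neutral ones). By construction $A\subseteq A^+$, and since no component meets both $A$ and $C$ we get $C\subseteq C^+$; moreover $A^+\cap C^+=\emptyset$ and $A^+\cup C^+=V\setminus B$, so $A^+\cup B\cup C^+=V$ with all three sets pairwise disjoint. The neutral components are the free parameter that lets the two sets fill $V$, and assigning them to $C^+$ is an arbitrary but harmless choice.

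It then remains to verify $A^+\perp_{\mathcal{G}}C^+\mid B$. Since $A^+$ and $C^+$ are unions of distinct components of $H$, there is no edge of $H$ between them, so every path in $\mathcal{G}^m$ from $A^+$ to $C^+$ must pass through $B$; that is, $A^+$ and $C^+$ are separated by $B$ in $\mathcal{G}^m$. Finally, because $A^+\cup B\cup C^+=V$ we have $AN_{\mathcal{G}}(A^+\cup B\cup C^+)=V$, so the ancestral sub-DAG is again all of $\mathcal{G}$ and Proposition \ref{prop:SeparationMoralized} applies in the reverse direction to yield $A^+\perp_{\mathcal{G}}C^+\mid B$. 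The only point requiring care is this re-application of the moralization criterion: one must confirm that the new sets still satisfy the ancestral-closure hypothesis, which here is immediate since their union with $B$ is all of $V$.
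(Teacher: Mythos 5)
Your proof is correct and follows essentially the same route as the paper's: both pass to the moralized graph $\mathcal{G}^m$ via Proposition \ref{prop:SeparationMoralized}, decompose $(\mathcal{G}^m)_{V\setminus B}$ into connected components, and extend $A$ and $C$ by allocating the remaining components. The only cosmetic difference is that you assign all neutral components to $C^+$ and verify the final $d$-separation with a single clean re-application of the moralization criterion (checking the ancestral-closure hypothesis explicitly), whereas the paper assigns them arbitrarily and argues separation component by component; both are valid.
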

\begin{restatable}{corollary}{coroExtension}\label{coro:Extension}
    In the context of Proposition \ref{prop:Extension}, if $(\mathcal{G}^m)_{V\backslash B}$ has only two connected components, then the extension given in said proposition is unique, and  given by those two connected components. 
\end{restatable}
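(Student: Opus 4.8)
The plan is to move the entire argument into the moralized graph and reduce the notion of a valid extension to a purely graph-theoretic partition statement. Since $V=AN_{\mathcal{G}}(A\cup B\cup C)$, and since any candidate extension satisfies $A^+\cup B\cup C^+=V$ so that $AN_{\mathcal{G}}(A^+\cup B\cup C^+)=V$ as well, Proposition \ref{prop:SeparationMoralized} lets me replace every $d$-separation statement appearing here by ordinary separation in $\mathcal{G}^m$. In particular, writing $H\defeq(\mathcal{G}^m)_{V\backslash B}$, the hypothesis $A\perp_{\mathcal{G}}C|B$ becomes: no connected component of $H$ contains both a node of $A$ and a node of $C$. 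Likewise, for disjoint $A^+,C^+$ with $A^+\cup C^+=V\backslash B$, the requirement $A^+\perp_{\mathcal{G}}C^+|B$ is equivalent to the absence of any edge of $H$ between $A^+$ and $C^+$.

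First I would record the key structural fact: a bipartition $V\backslash B=A^+\sqcup C^+$ has no crossing edge in $H$ if and only if each of $A^+$ and $C^+$ is a union of connected components of $H$. The nontrivial direction uses the standard observation that splitting a single connected component across the two sides necessarily produces a crossing edge, contradicting the separation of $A^+$ and $C^+$. Thus every valid extension corresponds exactly to a way of distributing the connected components of $H$ between the two sides, subject to $A\subseteq A^+$ and $C\subseteq C^+$.

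Now I would specialize to the case where $H$ has exactly two connected components $K_1,K_2$. Assuming $A$ and $C$ are nonempty, the separation of $A$ and $C$ forces $A$ to meet only one component and $C$ only the other: if $A$ met both $K_1$ and $K_2$, then the nonempty set $C$ would have to share a component with $A$, contradicting the hypothesis. Relabelling so that $A\subseteq K_1$ and $C\subseteq K_2$, any valid extension must satisfy $K_1\subseteq A^+$ (because $A^+$ contains $A$ and is a union of components, so it must contain the whole of $K_1$) and, symmetrically, $K_2\subseteq C^+$. Since $A^+$ and $C^+$ are disjoint and cover $V\backslash B=K_1\cup K_2$, this leaves only $A^+=K_1$ and $C^+=K_2$, which is precisely the extension produced by Proposition \ref{prop:Extension}. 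The main point to argue carefully is the component-versus-separation equivalence of the second step; once that is in place uniqueness is immediate, and the only hypothesis worth flagging is the nonemptiness of $A$ and $C$, without which components meeting neither set could be assigned arbitrarily.
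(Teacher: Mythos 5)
Your proof is correct and takes essentially the same route as the paper's: translate the $d$-separation of any candidate extension $A^+,C^+$ into separation in $\mathcal{G}^m$ via Proposition \ref{prop:SeparationMoralized}, and then force $A^+$ and $C^+$ to coincide with the two connected components of $(\mathcal{G}^m)_{V\backslash B}$. Your write-up is simply more explicit than the paper's two-line argument — in particular the crossing-edge/union-of-components equivalence, and the nonemptiness of $A$ and $C$ needed to pin each set to a single component, are both left implicit there.
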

\subsection{Acyclic directed mixed graphs}  \label{sec:PrelimADMG}
DAGs will not be the only kind of graphs which we will be interested in. Directed edges are used to represent dependencies or effects with a clear orientation. However, in some scenarios we want a graph to reflect a non-directional dependence. For this we will use bi-directed edges, what gives rise to directed mixed graphs.
\begin{definition}[Acyclic directed mixed graph \texorpdfstring{\cite[Sec. 1]{richardson2003markov}})]
    A  directed mixed  graph $\mathcal{G}$ over nodes $V$ is a graph over these nodes where both directed $\to$ and bi-directed $\leftrightarrow$ edges are allowed. A path in $\mathcal{G}$ is an alternating sequence $\mathfrak{p} =(v_1,e_1,v_2,\ldots,e_{n-1},v_n)$ of distinct vertices $v_i$ and edges $e_i$ such that $v_i$ and $v_{i+1}$ are connected by $e_i$. We say that $\mathfrak{p}$ is a directed path from $v_1$ to $v_n$ if for every $i$, $e_i$ is a directed edge  pointing from $v_i$ to $v_{i+1}$. We say that a  directed mixed graph is acyclic (and we call it an acyclic directed mixed  graph (ADMG)) if there is no directed path between two nodes $v_1$ to $v_n$ such that the edge $v_n\to v_1$ also exists. 
\end{definition}
\begin{remark}
    In an ADMG there might be two edges between two nodes, but at least one of them must the bi-directed, as otherwise we would have a cycle. Furthermore, note that a bi-directed edge $v\leftrightarrow w$ does {not} aim to replace $v \to w$ and $w\to v$.
\end{remark}
\begin{definition}[Descendants, children, ancestors, parents and spouses of a node in an ADMG \texorpdfstring{\cite[Sec. 2]{richardson2003markov}})]\label{def:DescendantsADMG}
    Let $\mathcal{G}$ be an ADMG over nodes $V$. Given a node $v\in V$, we define and denote the set of its parents, ancestors, descendants and children as in Definition \ref{def:DescendantsDAG}, meaning with respect only to directed edges. We define the set of spouses of $v$ in $\mathcal{G}$, and denote it as $SP_{\mathcal{G}}(v)$, as the set $\{w\in V\colon v\leftrightarrow w \text{ exists in } \mathcal{G}\}$. By convention we assume a node to be a spouse of itself.
\end{definition}
\begin{definition}[Collider in a path of an ADMG \texorpdfstring{\cite[Sec. 2.1]{richardson2003markov}})]
    Let $\mathcal{G}$ be an ADMG over nodes $V$. Consider a path $\mathfrak{p} =(v_1,e_1,v_2,\ldots,e_{n-1},v_n)$ in $\mathcal{G}$. We say that $v_i$ is a collider in the path if the edges preceding and succeeding $v_i$ in the path have an arrowhead at $v_i$, i.e. we have one of the structures in the path: $\to v_i \leftarrow$, $\to v_i \leftrightarrow$, $\leftrightarrow v_i \leftarrow$, $\leftrightarrow v_i \leftrightarrow$. Otherwise $v_i$ is called a non-collider in the path.
\end{definition}
\begin{definition}[$m$-separation in an ADMG  \texorpdfstring{\cite[Sec. 2.1]{richardson2003markov}})]\label{def:mSep}
    Let $\mathcal{G}$ be an ADMG over nodes $V$. Given two distinct nodes $v,w\in V$ and $B\subset V\backslash\{v,w\}$, we say that a path $\mathfrak{p}$ between $v$ and $w$ is blocked by $B$ if one of the following holds:
    \begin{enumerate}
        \item There is a non-collider in $\mathfrak{p}$ which is  in $B$,
        \item there is a collider $v$ in the path such that $DE_{\mathcal{G}}(v)\cap B=\emptyset$.
    \end{enumerate}
    Given $A,B,C\subset V$ finite pairwise pairwise disjoint sets of nodes, we say that $A$ and $C$ are $m$-separated by $B$ in $\mathcal{G}$  (and we denote it by $A\perp_{\mathcal{G}} C|B$) if every path from a node in $A$ to a node in $C$ is blocked by $B$.
\end{definition}
\begin{remark}
    The concept of $m$-separation in an ADMG is a generalization of  $d$-separation in a DAG. The difference is that a node can be a collider in a path in an ADMG in more ways than in a DAG, due to the presence of bi-directed edges. Furthermore, adjacent colliders in a path in an ADMG can exist.
\end{remark}
As we did with DAGs, we would be interested in translating the $m$-separation statement in an ADMG to an undirected graph. For DAGs we had the moralized graph (Definition \ref{def:MoralizedDAG}). For ADMGs we will have an augmented graph.
\begin{definition}[Augmented graph of an ADMG \texorpdfstring{\cite[Sec. 2.2]{richardson2003markov}})]
    Let $\mathcal{G}$ be an ADMG over nodes $V$. Two nodes $v,w\in V$ are collider connected in $\mathcal{G}$ if there is a path between them such that all intermediate nodes are colliders in the path. By convention  two adjacent nodes are collider connected. The augmented graph of $\mathcal{G}$, denoted by $\mathcal{G}^{aug}$, is an undirected graph over nodes $V$, such that $v-w$ exists in $\mathcal{G}^{aug}$ if and only if $v$ and $w$ are collider connected in $\mathcal{G}$.
\end{definition}
\begin{remark}
    The augmentation of an ADMG is a generalization of the moralization process of a DAG. Indeed,  two nodes of a DAG have a common child if and only if then they are collider connected. 
\end{remark}
\begin{restatable}{proposition}{propSeparationADMG}\label{prop:SeparationADMG}
    Let $A,B,C$ be pairwise  disjoint sets of nodes of an ADMG $\mathcal{G}$. Then $A$ and $C$ are $m$-separated by $B$ in $\mathcal{G}$ if and only if $B$ separates $A$ and $C$ in $(\mathcal{G}_{AN_{\mathcal{G}}(A\cup B\cup C)})^{aug}$
\end{restatable}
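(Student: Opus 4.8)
The plan is to follow the same two-stage strategy as the moralization criterion of Proposition~\ref{prop:SeparationMoralized}: first reduce the statement to the ancestral subgraph, and then prove the augmentation equivalence there. Write $W := AN_{\mathcal{G}}(A\cup B\cup C)$ and $\mathcal{H} := \mathcal{G}_W$. The first step is a confinement lemma: every $m$-connecting path between $A$ and $C$ given $B$ in $\mathcal{G}$ may be taken to lie entirely in $W$, so that $A\perp_{\mathcal{G}}C|B$ if and only if $A\perp_{\mathcal{H}}C|B$. To see this, fix a node $v$ on such a path. If $v$ is a non-collider then one of its two path-edges has a tail at $v$, and following arrowheads forward along the path produces a directed chain $v\to\cdots$ that can only terminate at an endpoint of the path (a node of $A\cup C$) or at a collider; since colliders on an $m$-connecting path are open, such a collider lies in $AN_{\mathcal{G}}(B)$. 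Hence $v\in AN_{\mathcal{G}}(A\cup C\cup B)=W$, and colliders and endpoints lie in $W$ for the same reasons. Restricting to $\mathcal{H}$ preserves both collider status and the witnessing descendant chains into $B$ (these chains consist of ancestors of $B$, hence stay in $W$), while the reverse implication is immediate since $DE_{\mathcal{H}}(\cdot)\subseteq DE_{\mathcal{G}}(\cdot)$. It therefore suffices to prove, for the ancestral graph $\mathcal{H}$, that $A\perp_{\mathcal{H}}C|B$ if and only if $B$ separates $A$ and $C$ in $\mathcal{H}^{aug}$.

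For the easy direction I would argue the contrapositive. Given an $m$-connecting path $\mathfrak{p}$ in $\mathcal{H}$, list its non-colliders together with its two endpoints in the order in which they appear, say $a=u_0,u_1,\ldots,u_k=c$. Between two consecutive $u_j,u_{j+1}$ every intermediate vertex of $\mathfrak{p}$ is a collider, so by definition $u_j$ and $u_{j+1}$ are collider connected and hence $u_j-u_{j+1}$ is an edge of $\mathcal{H}^{aug}$. This yields a walk $a-u_1-\cdots-c$ in $\mathcal{H}^{aug}$; its interior vertices are non-colliders of $\mathfrak{p}$ and therefore not in $B$, while $a,c\notin B$ by disjointness. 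Thus $B$ fails to separate $A$ and $C$ in $\mathcal{H}^{aug}$.

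The hard direction is the converse, and it is where the ancestral reduction does its real work. Starting from a path $a=w_0-w_1-\cdots-w_m=c$ in $\mathcal{H}^{aug}$ avoiding $B$, chosen of minimal length over all endpoints in $A$ and $C$, I would expand each augmented edge $w_i-w_{i+1}$ into a collider-connecting path in $\mathcal{H}$ and concatenate these into a walk $\mathfrak{q}$ from $a$ to $c$. In $\mathfrak{q}$ every non-collider must be checked to avoid $B$ (the junctions $w_i$ do, by construction) and every collider must be shown to be open. The crucial point is that each collider $t$ of $\mathfrak{q}$ lies in $W=AN_{\mathcal{G}}(A\cup B\cup C)$: if $t\in AN_{\mathcal{G}}(B)$ it is open and we continue, whereas if $t\in AN_{\mathcal{G}}(A\cup C)\setminus AN_{\mathcal{G}}(B)$ we truncate $\mathfrak{q}$ at $t$ and append a directed path from $t$ down to the nearest vertex of $A\cup C$; since $t\notin B$ and the interior of this directed chain avoids $B$, the truncation turns $t$ into a non-collider outside $B$ and produces a genuine $m$-connecting path between $A$ and $C$.

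I expect the main obstacle to be exactly this reconstruction step: one must verify that the spliced walk can always be pruned to a \emph{path} whose colliders are all open and whose endpoints still lie one in $A$ and one in $C$, which requires careful bookkeeping of the orientations at the junction vertices and an appeal to the minimality of the chosen augmented path in order to rule out degenerate truncations (in particular truncations that would reconnect $a$ to another node of $A$). An alternative that sidesteps some of this case analysis is to replace every bi-directed edge $v\leftrightarrow w$ by a fresh common cause $v\leftarrow\ell\to w$, obtaining a DAG $\mathcal{D}$ on an enlarged node set for which $m$-separation in $\mathcal{G}$ coincides with $d$-separation in $\mathcal{D}$ among subsets of the original nodes, and then to deduce the claim directly from Proposition~\ref{prop:SeparationMoralized} after checking that moralizing the ancestral subgraph of $\mathcal{D}$ restricts to augmenting $\mathcal{H}$.
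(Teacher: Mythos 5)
Your ancestral-confinement lemma and the ``easy'' direction are correct and coincide in substance with the first half of the paper's proof (the paper likewise shows that any $m$-connecting path lies in $AN_{\mathcal{G}}(A\cup B\cup C)$ and that its $B$-nodes are colliders that get circumvented by augmented edges). The genuine gap is in the converse, exactly at the step you flag: your repair of closed colliders fails as stated, and minimality of the augmented path cannot rescue it. Concretely, take the ADMG with edges $a_1\to t$, $c\to t$, $t\to a_2$, and $A=\{a_1,a_2\}$, $C=\{c\}$, $B=\emptyset$. The augmented path $a_1 - c$ is already of minimal length (a single augmented edge, so there are no junction vertices whose orientations need bookkeeping), it expands to the collider path $a_1\to t\leftarrow c$, and the collider $t$ has no descendant in $B$ while its only descendant in $A\cup C$ is $a_2\in A$. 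Truncating on the $a$-side and appending $t\to a_2$ yields $a_1\to t\to a_2$, a path from $A$ to $A$ --- precisely the degenerate outcome you worry about --- and no appeal to minimality excludes it. (The proposition itself is not threatened: $a_2\leftarrow t\leftarrow c$ is an open path; it is your construction that fails to find it.) The repair that works, and is the crux of the paper's proof, is a re-rooting rather than a truncation: among the colliders that are ancestors of $A\cup C$ but not of $B$, pick the \emph{last} one (in the $a$-to-$c$ traversal) having a descendant in $A$, walk \emph{up} from that descendant in $A$ to this collider --- which converts it into a non-collider --- then continue along the spliced path toward $c$, diverting down a descendant path into $C$ at the first subsequent closed collider, which by the choice of ``last'' must have its descendants in $C$. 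Without this case analysis (the paper's cases (b.1)/(b.2)), the proof is incomplete; the adaptive choice of which side to keep cannot be replaced by any fixed rule.

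Your alternative route --- replacing each $v\leftrightarrow w$ by $v\leftarrow\ell\to w$ and combining Proposition \ref{prop:SeparationEquivLatentProjection} (note $\mathcal{G}$ is indeed the latent projection of this canonical DAG $\mathcal{D}$) with Proposition \ref{prop:SeparationMoralized} --- is a genuinely different strategy and can be made to work, reducing the claim to: $B$ separates $A$ and $C$ in $(\mathcal{D}_{AN_{\mathcal{D}}(A\cup B\cup C)})^{m}$ if and only if it does so in $(\mathcal{G}_{AN_{\mathcal{G}}(A\cup B\cup C)})^{aug}$. But this last step is not a ``restriction'' of one graph to another: the moral graph lives on a strictly larger vertex set (it contains the latents, including dangling ones with a single surviving child), and an augmented edge coming from a long collider path through nodes of $B$, e.g.\ $v\to b_1\leftrightarrow b_2\leftarrow w$ with $b_1,b_2\in B$, corresponds in the moral graph not to an edge $v-w$ but to the path $v-\ell-w$ through a latent. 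So what must be proved is a separation equivalence between two different undirected graphs, and that argument involves essentially the same path surgery you were hoping to avoid. As it stands, both of your routes have a correct skeleton with the central combinatorial step left open.
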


\begin{restatable}{proposition}{propExtensionADMG}\label{prop:ExtensionADMG}
    Consider an ADMG $\mathcal{G}$ over nodes $V$. Let $A,B,C\subset V$ pairwise disjoint sets of nodes such that $V=AN_{\mathcal{G}}(A\cup B\cup C)$ and $A\perp_{\mathcal{G}}C|B$. Then $A$ and $C$ can be extended to disjoint sets of nodes which fill $V$ and remain $m$-separated given $B$. More precisely, there exist $A^+,C^+\subset V$ such that $A\subseteq A^+$, $C\subseteq C^+$, $A^+\cap C^+=\emptyset$, $A^+\perp_{\mathcal{G}}C^+|B$ and $V=A^+\cup B \cup C^+$.
\end{restatable}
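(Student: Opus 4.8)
The plan is to mirror the proof of Proposition~\ref{prop:Extension}, replacing $d$-separation and moralization by $m$-separation and augmentation via the characterization in Proposition~\ref{prop:SeparationADMG}. The crucial simplification comes from the hypothesis $V=AN_{\mathcal{G}}(A\cup B\cup C)$: it forces $\mathcal{G}_{AN_{\mathcal{G}}(A\cup B\cup C)}=\mathcal{G}$, so by Proposition~\ref{prop:SeparationADMG} the assumption $A\perp_{\mathcal{G}}C|B$ is equivalent to the purely undirected statement that $B$ separates $A$ and $C$ in the augmented graph $\mathcal{G}^{aug}$.

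Working in the undirected graph $H\defeq(\mathcal{G}^{aug})_{V\backslash B}$ obtained by deleting $B$, separation of $A$ and $C$ by $B$ means precisely that no connected component of $H$ contains both a node of $A$ and a node of $C$. I would then define $A^+$ to be the union of all connected components of $H$ that meet $A$, and set $C^+\defeq(V\backslash B)\backslash A^+$, i.e. the union of the remaining components. The bookkeeping is then immediate: $A\subseteq A^+$ since each node of $A$ lies in a component meeting $A$; $C\subseteq C^+$ since a node of $C$ lying in $A^+$ would be connected in $H$ to a node of $A$, contradicting the separation; and $A^+\cap C^+=\emptyset$ together with $A^+\cup B\cup C^+=V$ hold by construction.

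It remains to verify $A^+\perp_{\mathcal{G}}C^+|B$. Applying Proposition~\ref{prop:SeparationADMG} in the reverse direction, this is equivalent to $B$ separating $A^+$ and $C^+$ in $(\mathcal{G}_{AN_{\mathcal{G}}(A^+\cup B\cup C^+)})^{aug}$. Since $A^+\cup B\cup C^+=V$ and every node is its own ancestor, $AN_{\mathcal{G}}(A^+\cup B\cup C^+)=V$, so once more the relevant graph is exactly $\mathcal{G}^{aug}$ and it suffices to check that $B$ separates $A^+$ and $C^+$ there. But $A^+$ and $C^+$ are unions of disjoint collections of connected components of $H$, so no path in $H$ joins a node of $A^+$ to a node of $C^+$; equivalently, every $A^+$--$C^+$ path in $\mathcal{G}^{aug}$ passes through $B$, which is the desired separation.

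The only step that requires genuine care is this reapplication of Proposition~\ref{prop:SeparationADMG} to the enlarged sets: the characterization reads separation off the ancestral subgraph $(\mathcal{G}_{AN_{\mathcal{G}}(A^+\cup B\cup C^+)})^{aug}$, so one must confirm that enlarging $A$ to $A^+$ and $C$ to $C^+$ does not change which augmented graph is used. This is exactly where the hypothesis $V=AN_{\mathcal{G}}(A\cup B\cup C)$ is indispensable: because $A^+\cup B\cup C^+=V$ and $V$ is its own ancestral closure, the graph $\mathcal{G}^{aug}$ governs both the hypothesis and the conclusion. Everything else is the verbatim connected-component argument of the DAG case, and I expect no additional difficulty beyond tracking that the augmentation (rather than moralization) behaves identically under deletion of $B$.
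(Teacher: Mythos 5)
Your proof is correct and takes essentially the same approach as the paper: the paper's own proof of Proposition~\ref{prop:ExtensionADMG} simply states that it is analogous to Proposition~\ref{prop:Extension} with Proposition~\ref{prop:SeparationADMG} replacing the moralization characterization, which is exactly the connected-component argument in $(\mathcal{G}^{aug})_{V\backslash B}$ that you spell out. The only cosmetic difference is that you place every component not meeting $A$ into $C^+$, whereas the paper's argument distributes the leftover components arbitrarily between $A^+$ and $C^+$.
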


In Section \ref{sec:GMPVarma}  we will need to restrict a DAG to a certain subset of variables, while still representing the whole set of dependencies represented in the original DAG. To do so, we will use latent projections of DAGs into ADMGs.

\begin{definition}[Latent projection of a DAG \cite{richardson2023nested}]\label{def:LatentProjection}
    Let $\mathcal{G}$ be a DAG over a finite set of nodes $V\cup L$ disjoint union. The latent projection of $\mathcal{G}$ over $V$, denoted a $\mathcal{G}_V$, is an ADMG over $V$ such that for all $v,w\in V$ there is:
    \begin{itemize}
        \item A directed edge $v\to w$ if and only if $v\to w$ exists in $\mathcal{G}$ or  there exist $m\in\mathbb{N}$, $v_1,\ldots,v_m\not\in V$, and a directed path $v\to v_1\to\cdots\to v_m\to w$ in $\mathcal{G}$, 
        \item a bi-directed edge $v\leftrightarrow w$ if and only if there exist $m,n\in\mathbb{N}$, $v_1,\ldots,v_n,$ $w_1,\ldots,w_m,U\not\in V$ such that there exist directed paths $U\to v_1\to\cdots\to v_n\to v$ and $U\to w_1\to\cdots\to w_m\to w$ in $\mathcal{G}$.
    \end{itemize}
\end{definition}
\begin{remark}
    All directed edges between nodes in $V$ are preserved by the latent projection operation. Therefore, the sets of parents, ancestors, children and  descendants  of nodes of $V$ in $V$ do not change in the latent projection, as these concepts involve only directed edges. 
\end{remark}
As mentioned before, the purpose of  latent projections is to find a  graph over $V$ which conveys the same separation information as the original DAG. The following result shows that this is indeed achieved.
\begin{proposition}[Separation equivalence under latent projection \cite{richardson2023nested}]\label{prop:SeparationEquivLatentProjection}
    Let $\mathcal{G}$ be a DAG over nodes $V\cup L$ disjoint union and $\mathcal{G}_V$ its latent projection. Let $A,B,C\subset V$ pairwise disjoint sets of nodes. Then $A$ and $C$ are $d$-separated by $B$ in $\mathcal{G}$ if and only if they are $m$-separated by $B$ in $\mathcal{G}_V$.
\end{proposition}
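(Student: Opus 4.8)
The plan is to prove the equivalence by setting up a correspondence between connecting paths (or walks) in the two graphs, exploiting the fact that the defining structures of the latent projection preserve the arrowhead marks at the endpoints of each edge. Throughout I use that, for $v\in V$, one has $DE_{\mathcal{G}_V}(v)=DE_{\mathcal{G}}(v)\cap V$ (a directed path through latent nodes in $\mathcal{G}$ contracts to a directed edge in $\mathcal{G}_V$, and conversely, by the remark following Definition~\ref{def:LatentProjection}), together with the standing assumption $B\subseteq V$; hence the activation condition for a collider located at a node of $V$ is identical in both graphs. I will also use the standard fact that two nodes are joined by a $d$-connecting (resp.\ $m$-connecting) path given $B$ if and only if they are joined by such a walk, which lets me manipulate walks freely and extract a path at the end.

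For the direction ``$m$-connected in $\mathcal{G}_V$ $\Rightarrow$ $d$-connected in $\mathcal{G}$'' (the contrapositive of the ``only if''), I would take an $m$-connecting path $\pi$ in $\mathcal{G}_V$ and replace every edge by a witnessing structure in $\mathcal{G}$: a directed edge $v\to w$ by a directed path $v\to v_1\to\cdots\to v_m\to w$ through latents, and a bi-directed edge $v\leftrightarrow w$ by a fork $v\leftarrow\cdots\leftarrow U\to\cdots\to w$. Every latent node so introduced is a \emph{non-collider} on the resulting walk and lies outside $B$ (since $B\subseteq V$), so it cannot block. Because the substitution keeps the marks at each $V$-node $v$ of $\pi$ intact (a directed edge contributes a tail at its source and an arrowhead at its target, a bi-directed edge an arrowhead at both ends), the collider/non-collider status at $v$ is unchanged, and by the descendant identity above its activation by $B$ is unchanged as well. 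Hence the lifted walk is $d$-connecting.

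For the converse ``$d$-connected in $\mathcal{G}$ $\Rightarrow$ $m$-connected in $\mathcal{G}_V$'', I would start from a $d$-connecting path $\mathfrak{p}$ and project it by contracting each maximal subpath whose interior nodes are all latent. If such a latent segment between consecutive $V$-nodes $u_i,u_{i+1}$ has no interior collider, then it has a single source and takes the form $u_i\leftarrow\cdots\leftarrow \ell^{*}\to\cdots\to u_{i+1}$ (allowing $\ell^{*}\in\{u_i,u_{i+1}\}$); in each case the definition of the latent projection yields exactly one edge between $u_i$ and $u_{i+1}$ whose endpoint marks match those of the segment (a bi-directed edge when $\ell^{*}$ is an interior latent fork, a directed edge when the segment is monotone). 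Contracting all such segments reproduces a walk in $\mathcal{G}_V$ on which the marks, and hence the collider status, at every surviving $V$-node agree with $\mathfrak{p}$, so activation is preserved.

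The main obstacle is precisely the remaining case: a latent segment containing an \emph{interior collider} $\ell$, which cannot be contracted to a single edge. Such an $\ell$ is active only because $DE_{\mathcal{G}}(\ell)\cap B\neq\emptyset$; I would resolve it by choosing $b\in DE_{\mathcal{G}}(\ell)\cap B\subseteq V$ and a directed path $\ell\to\cdots\to b$, and then rerouting both incoming arms of the collider down to $b$. In $\mathcal{G}_V$ this turns $b$ into an \emph{observed} collider lying in $B$, which is active, so the connection is maintained; the price is that one produces a walk rather than a path, and one must argue that $b$ is genuinely a collider on the projected walk and that the construction terminates. Checking that this rerouting composes correctly across several interior colliders, and that the final object reduces to a bona fide $m$-connecting path, is the delicate bookkeeping step. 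As an alternative that avoids all path surgery, one could instead pass through the undirected characterizations: set $W=AN_{\mathcal{G}}(A\cup B\cup C)$, use Proposition~\ref{prop:SeparationMoralized} and Proposition~\ref{prop:SeparationADMG} to replace the two separation statements by separations in $(\mathcal{G}_W)^m$ and in $((\mathcal{G}_V)_{W\cap V})^{aug}$, and then show that the latter undirected graph is obtained from the former by marginalizing out the latent nodes (deleting each and completing its neighborhood into a clique). Since such marginalization preserves separation given any set disjoint from the marginalized nodes and $B\subseteq V$, the equivalence follows, with the graph identity $((\mathcal{G}_V)_{W\cap V})^{aug}=\mathrm{marg}_{W\cap L}\big((\mathcal{G}_W)^m\big)$ now playing the role of the crux.
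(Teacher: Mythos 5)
First, a point of calibration: the paper itself does \emph{not} prove Proposition~\ref{prop:SeparationEquivLatentProjection}. It is imported as a known result from \cite{richardson2023nested} and used as a black box; no proof of it appears in Appendix~\ref{sec:AppendixProofs}. So your proposal cannot be compared against an in-paper argument, only judged on its own merits.

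On those merits, your main route is sound and is essentially the standard walk-surgery proof of this equivalence. The two facts you isolate, namely $DE_{\mathcal{G}_V}(v)=DE_{\mathcal{G}}(v)\cap V$ for $v\in V$ and the preservation of endpoint marks by the witnessing structures of the latent projection, are correct and do carry the lifting direction: every latent node introduced by substituting a directed path or a fork is a non-collider outside $B$, collider status and activation at $V$-nodes are unchanged, and the walk/path equivalence (which you should state and prove as a separate lemma, for both $d$- and $m$-separation; it is standard but nowhere available in the paper) converts the lifted walk into a $d$-connecting path. In the projection direction, your rerouting of an active latent collider $\ell$ needs one additional clause to be airtight: the directed path from $\ell$ into $B$ must be chosen so that it meets $B$ \emph{only at its terminal node} (truncate at the first element of $B$ encountered). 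Otherwise an intermediate node of the rerouting path could lie in $B$; since all such intermediate nodes occur as non-colliders on the rerouted walk, the walk would be blocked and the construction would fail. With that fix the bookkeeping you flag does close: each latent collider is treated once, the only new colliders created are nodes of $B$ (hence active), so after rerouting all latent nodes on the walk are non-colliders, every maximal latent segment then has a unique source and contracts to a single directed or bi-directed edge with the same endpoint marks, and the projected walk is $m$-connecting. By contrast, I would drop your alternative route: the identity $((\mathcal{G}_V)_{W\cap V})^{aug}=\mathrm{marg}_{W\cap L}\bigl((\mathcal{G}_W)^m\bigr)$ that it rests on is not a known-off-the-shelf fact you can simply invoke; it encodes essentially the entire content of the proposition, so asserting it without proof makes that route circular in effect, whereas your first argument genuinely proves the statement once the truncation detail and the walk/path lemma are supplied.
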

\subsection{The global Markov property for VAR processes without instantaneous effects  }  \label{sec:GmpVARWithout}
As mentioned in Section \ref{sec:Intro}, causality for time series has been mostly focused around the idea of Granger causality. Outside Granger's framework, the study of causality for time series is still a growing field. \cite{thams2022identifying} adopts a linearity and additive noise assumption, which results on a family of time series called vector auto-regressive processes.
\begin{definition}[VAR process without instantaneous effects \texorpdfstring{\cite[Ch. 10]{hamilton2020time}})]\label{def:VARWithout}
    We say that a  time series $S$ follows a VAR($p$) process without instantaneous effects if there exists $p\in\mathbb{N}$ and coefficient matrices $A_1,\ldots,A_p\in\mathbb{R}^{d\times d}$ such that for all $t\in\mathbb{Z}$: $$S_t=A_1S_{t-1}+\ldots +A_pS_{t-p}+\varepsilon_t,$$ where $A_1,\ldots,A_p$ are such that $\det (I_d\lambda^p-A_1\lambda^{p-1}-\ldots -A_p)=0$ implies $|\lambda|<1$, and  $\varepsilon_t$ is an i.i.d. process with finite second moments and jointly independent components. We define the full-time graph of this process as an infinite directed graph over nodes $\{S_t^i\colon i\in\{1,\ldots,d\},\ t\in\mathbb{Z}\}$ such that for $t\in\mathbb{Z}$ and $k\in\{1,\ldots,p\}$ there is an edge from $S_{t-k}^j$ to $S_t^i$ if $(A_k)_{i,j}\neq 0$. Note how this graph is always acyclic.
\end{definition}
The first considerable  contribution presented in \cite{thams2022identifying}, namely their Theorem 1, is the global Markov property for VAR processes without instantaneous effects, as stated in the following theorem.
\begin{restatable}[Global Markov property for VAR processes without instantaneous effects \texorpdfstring{\cite[Thm. 1]{thams2022identifying}})]{theorem}{thmGMPWithout} \label{thm:GMPWithout}
    Consider $p\in\mathbb{N}$, and S a VAR($p$) process as defined in \ref{def:VARWithout}. Consider finite pairwise disjoint sets $A,B,C$ of nodes of the full-time graph $\mathcal{G}_{full}$.  If $A$ and $C$ are d-separated by $B$ in $\mathcal{G}_{full}$, then  $A\indep C|B$.
\end{restatable}
\section{The global Markov property  }\label{sec:GMP}

Theorem \ref{thm:GMPWithout}  shows that the global Markov property holds for VAR processes without instantaneous effects with respect to the full-time DAG. First we will study the global Markov property for a slightly larger family of VAR processes: Those with instantaneous effects.
\subsection{The global Markov property for VAR processes with instantaneous effects}\label{sec:GMPVar}
Considering time series models with instantaneous effects is of interest as it can make the Markov equivalence class of a certain full-time DAG non-trivial. Indeed, if two full-time graphs of a VAR process without instantaneous effects are Markov equivalent, then they are equal \cite[Thm. 10.1]{peters2017elements}. This is essentially due to the fact that all edges point forward in time. However, instantaneous effects may create V-structures which might broaden the Markov equivalence class. Therefore, studying VAR processes with instantaneous effects is a problem of interest.
\begin{definition}[VAR process with instantaneous effects]\label{def:VARWith}
    We say that a  time series $S$ follows a VAR($p$) process  with instantaneous effects if there exists $p\in\mathbb{N}$ and coefficient matrices $A_0,A_1,\ldots,A_p\in\mathbb{R}^{d\times d}$ such that ${diag}(A_0)=0$ and for all $t\in\mathbb{Z}$: $$S_t=A_0S_t+A_1S_{t-1}+\ldots +A_pS_{t-p}+\varepsilon_t,$$ where $A_0,A_1,\ldots,A_p$ are such that $\det ((I_d-A_0)\lambda^p-A_1\lambda^{p-1}-\ldots -A_p)=0$ implies $|\lambda|<1$, and  $\varepsilon_t$ is an i.i.d. process with finite second moments and jointly independent components. We define the full-time graph of this process as an infinite directed graph over endogenous nodes $\{S_t^i\colon i\in\{1,\ldots,d\},\ t\in\mathbb{Z}\}$ such that for $t\in\mathbb{Z}$ and $k\in\{0,\ldots,p\}$ there is an edge from $S_{t-k}^j$ to $S_t^i$ if $(A_k)_{i,j}\neq 0$. We assume this graph to be acyclic, for which it is sufficient that the graph of instantaneous edges is acyclic. 
\end{definition}
\begin{remark}
    A VAR($p$) process with instantaneous effects as per Definition \ref{def:VARWith} is distributionally equivalent to a VAR($p$) process without instantaneous effects and correlated contemporaneous innovations. Indeed, as we do not allow for instantaneous cycles, we can reorder the components of $S$ according to the topological ordering of the instantaneous DAG, so that all instantaneous edges ``point downwards'', meaning $A_0$ can be assumed to be triangular inferior with zeroes along the diagonal. Thus,  $I_d-A_0$ is invertible, what allows us to write: 
    \begin{equation}\label{eq:VAREquiv}
        S_t=(I_d-A_0)^{-1}A_1S_{t-1}+\ldots +(I_d-A_0)^{-1}A_p+\delta_t,
    \end{equation}
    with $\delta_t=(I_d-A_0)^{-1}\varepsilon_t$ an i.i.d. process  with covariance matrix $(I_d-A_0)^{-1}\Gamma(I_d-A_0)^{-\top}$ (being $\Gamma=\mathbb{V}\text{ar}(\varepsilon_t)$)\footnote{ $A^{-\top}$ denotes the transpose of the inverse of an invertible matrix $A$.}, meaning $\delta_t$ are potentially correlated innovations. We define the full-time DAG associated with this representation as in Definition \ref{def:VARWithout}. Furthermore, this process without instantaneous effects admits an MA($\infty$) representation:
    \begin{equation*}
        \begin{split}
            0&=\det(I_d\lambda^p-(I_d-A_0)^{-1}A_1\lambda^{p-1}+\ldots +(I_d-A_0)^{-1}A_p)\\&=\det((I_d-A_0)^{-1})\det ((I_d-A_0)\lambda^p-A_1\lambda^{p-1}-\ldots -A_p)\\&=\det ((I_d-A_0)\lambda^p-A_1\lambda^{p-1}-\ldots -A_p)\\&\Rightarrow|\lambda|<1,
        \end{split}
    \end{equation*}
    as per the conditions given in Definition \ref{def:VARWith}. 
\end{remark}
Our first goal   is to generalize Theorem \ref{thm:GMPWithout} to the context of VAR processes with instantaneous effects. To do so, we need to study how the full-time graph changes when rewriting the process as in Equation \eqref{eq:VAREquiv}, and the correlation structure of this process's innovations. 
\begin{restatable}{proposition}{propExtendedDAG}\label{prop:ExtendedDAG}
    Consider a VAR($p$) process with instantaneous effects as per Definition \ref{def:VARWith}, and let $\mathcal{G}_I$ be its associated full-time DAG. Consider the distributionally equivalent VAR($p$) process without instantaneous effects as defined in Equation \eqref{eq:VAREquiv}, and let $\mathcal{G}_{II}$ be its associated full-time DAG. Then $\mathcal{G}_{II}$ contains the same nodes and no more edges than $\mathcal{G}^*$, which is the infinite DAG constructed from $\mathcal{G}_{I}$ by:
    \begin{enumerate}
        \item For each $i\in\{1,\ldots,d\}$ and  $t\in\mathbb{Z}$ draw  an edge from every node in the node set  $PA_{\mathcal{G}_I}(S_t^i)_{[t-p,t-1]}$ to every node in $DE_{\mathcal{G}_I}(S_t^i)_{[t]}$ (instantaneous descendants) if they did not already exist in $\mathcal{G}_I$. This means that we draw into every node edges coming from the non-contemporaneous parents of all its instantaneous ancestors (if they did not exist already).
        \item Remove all instantaneous edges from $\mathcal{G}_I$.
    \end{enumerate}
\end{restatable}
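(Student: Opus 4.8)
The plan is to translate the algebraic condition defining the edges of $\mathcal{G}_{II}$ into a combinatorial statement about instantaneous paths in $\mathcal{G}_I$, and to match this with the construction of $\mathcal{G}^*$. By Definition \ref{def:VARWithout} applied to Equation \eqref{eq:VAREquiv}, the DAG $\mathcal{G}_{II}$ has an edge $S_{t-k}^j\to S_t^i$ precisely when $((I_d-A_0)^{-1}A_k)_{i,j}\neq 0$, for $k\in\{1,\ldots,p\}$; in particular it has no instantaneous edges, just like $\mathcal{G}^*$, and both graphs share the same node set. Since we only claim that $\mathcal{G}_{II}$ contains \emph{no more} edges than $\mathcal{G}^*$ (cancellations in the matrix product may well kill edges that the purely combinatorial construction of $\mathcal{G}^*$ retains), it suffices to show that every nonzero entry $((I_d-A_0)^{-1}A_k)_{i,j}$ forces the edge $S_{t-k}^j\to S_t^i$ to be present in $\mathcal{G}^*$.

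First I would expand the inverse as a Neumann series. Because the instantaneous graph is acyclic, any walk in it of length $\geq d$ repeats a node and hence induces an instantaneous cycle; this is impossible, so $A_0^m=0$ for $m\geq d$ and $A_0$ is nilpotent. Consequently $(I_d-A_0)^{-1}=\sum_{m=0}^{d-1}A_0^m$, giving
\begin{equation*}
    \big((I_d-A_0)^{-1}A_k\big)_{i,j}=\sum_{m=0}^{d-1}\sum_{l}(A_0^m)_{i,l}(A_k)_{l,j}.
\end{equation*}
If the left-hand side is nonzero, then some summand $(A_0^m)_{i,l}(A_k)_{l,j}$ is nonzero, so there is an index $l$ with $(A_0^m)_{i,l}\neq 0$ and $(A_k)_{l,j}\neq 0$.

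Next I would read off these two conditions graphically, keeping in mind the index flip caused by the convention $(A_k)_{a,b}\neq 0\Leftrightarrow$ edge $S_{t-k}^b\to S_t^a$. The factor $(A_0^m)_{i,l}\neq 0$ expands into a product $(A_0)_{i,a_1}(A_0)_{a_1,a_2}\cdots(A_0)_{a_{m-1},l}$ of nonzero entries, which is exactly a directed instantaneous path $S_t^l\to S_t^{a_{m-1}}\to\cdots\to S_t^{a_1}\to S_t^i$; hence $S_t^i\in DE_{\mathcal{G}_I}(S_t^l)_{[t]}$ (the case $m=0$ gives $l=i$, handled by the convention that a node is its own descendant). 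The factor $(A_k)_{l,j}\neq 0$ with $k\geq 1$ says $S_{t-k}^j\in PA_{\mathcal{G}_I}(S_t^l)_{[t-p,t-1]}$. Thus $S_{t-k}^j$ is a non-contemporaneous parent of $S_t^l$ and $S_t^i$ is an instantaneous descendant of $S_t^l$, which is precisely the configuration for which step 1 of the $\mathcal{G}^*$ construction (applied to the node $S_t^l$) draws the edge $S_{t-k}^j\to S_t^i$. This yields the desired inclusion of edge sets.

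I expect the main obstacle to be one of careful bookkeeping rather than any deep difficulty: one must keep the transpose-like flip between matrix indices and edge orientation consistent throughout, correctly handle the degenerate $m=0$ term (which recovers the original lagged edges of $\mathcal{G}_I$, all of which survive in $\mathcal{G}^*$ since a node is an instantaneous descendant of itself), and confirm that the finiteness of the Neumann series legitimizes the finite path decomposition. A minor check is that $\mathcal{G}^*$ is genuinely a DAG: after removing instantaneous edges, every remaining edge runs strictly forward in time, so no directed cycle can arise.
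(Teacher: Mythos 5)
Your proposal is correct: every step checks out, including the nilpotency argument (a directed walk of length $\geq d$ in a $d$-node acyclic instantaneous graph would repeat a node and create a cycle, so $A_0^m=0$ for $m\geq d$), the term-by-term reading of a nonzero entry of $\sum_{m=0}^{d-1}A_0^mA_k$, and the handling of the $m=0$ term via the convention that a node is its own instantaneous descendant. The overall skeleton is the same as the paper's: both proofs reduce the claim to interpreting nonzero entries of $(I_d-A_0)^{-1}A_k$ as the configuration ``$S_{t-k}^j$ is a lagged parent of an instantaneous ancestor of $S_t^i$,'' which is exactly what step 1 of the $\mathcal{G}^*$ construction covers, and both note that cancellation only ever removes edges, never adds them. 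Where you genuinely diverge is in the key technical step: the paper assumes the components are topologically ordered so that $A_0$ is strictly lower triangular, writes the coefficient as in Equation \eqref{eq:EdgeCoeffs}, and then proves the exact identity \eqref{eq:InverseMatrix} stating that $((I_d-A_0)^{-1})_{i,l}$ equals the total instantaneous causal effect $ICE^{(d)}_{l\to i}$, by induction on the dimension $d$ using an explicit formula for the inverse of a lower triangular matrix from an external reference. You instead expand $(I_d-A_0)^{-1}$ as the finite Neumann series $\sum_{m=0}^{d-1}A_0^m$ and extract a single nonzero path product. Your route is more elementary and self-contained: it needs no topological reordering, no induction, no external inverse formula, and it proves only the implication actually required (nonzero entry $\Rightarrow$ path exists) rather than the full identity. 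What the paper's heavier route buys is the explicit closed-form interpretation of the entries of $(I_d-A_0)^{-1}$ as total causal effects, which is reused later (e.g.\ in the proof of Theorem \ref{thm:FaithfulnessVARWith}, where it is recalled that these entries are polynomials in $\mathrm{vech}(A_0)$); your expansion yields that polynomial structure too, so nothing downstream would break if your proof were substituted.
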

\begin{restatable}{corollary}{coroAncestors}\label{coro:Ancestors}
    In the context of Proposition \ref{prop:ExtendedDAG}, $AN_{\mathcal{G}_{II}}(S_t^i)\subseteq AN_{\mathcal{G}_{I}}(S_t^i)$ for all $t\in\mathbb{Z}$ and $i\in\{1,\ldots,d\}$.
\end{restatable}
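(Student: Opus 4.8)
The plan is to factor the desired inclusion through the intermediate graph $\mathcal{G}^*$ of Proposition \ref{prop:ExtendedDAG}. That proposition guarantees that $\mathcal{G}_{II}$ has the same node set as $\mathcal{G}^*$ and no more edges, so $\mathcal{G}_{II}$ is a spanning subgraph of $\mathcal{G}^*$; hence every directed path in $\mathcal{G}_{II}$ is also a directed path in $\mathcal{G}^*$, and therefore $AN_{\mathcal{G}_{II}}(S_t^i)\subseteq AN_{\mathcal{G}^*}(S_t^i)$. It thus suffices to prove the stronger statement $AN_{\mathcal{G}^*}(S_t^i)\subseteq AN_{\mathcal{G}_I}(S_t^i)$, i.e. that the construction producing $\mathcal{G}^*$ introduces no new ancestral relations.

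The key step is the observation that every edge of $\mathcal{G}^*$ is \emph{witnessed} by a directed path in $\mathcal{G}_I$. The edges of $\mathcal{G}^*$ come in two types. First, the original non-instantaneous edges of $\mathcal{G}_I$ that survive the deletion in step 2; for such an edge $u\to w$ there is trivially a directed path from $u$ to $w$ in $\mathcal{G}_I$. Second, the edges added in step 1: each such edge runs from some $u\in PA_{\mathcal{G}_I}(S_s^j)_{[s-p,s-1]}$ to some $w\in DE_{\mathcal{G}_I}(S_s^j)_{[s]}$. By definition $u\to S_s^j$ is an edge of $\mathcal{G}_I$ and $S_s^j\to\cdots\to w$ is a directed instantaneous path in $\mathcal{G}_I$, so concatenation gives a directed path $u\to S_s^j\to\cdots\to w$ in $\mathcal{G}_I$; in particular $u\in AN_{\mathcal{G}_I}(w)$. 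Note also that every added edge is non-instantaneous (its tail lies at a time $s-k$ with $k\ge 1$ and its head at time $s$), so these edges are never removed in step 2 and are precisely the new edges of $\mathcal{G}^*$.

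With this in hand I would finish as follows. Take any $v\in AN_{\mathcal{G}^*}(S_t^i)$ together with a directed path $v=w_0\to w_1\to\cdots\to w_n=S_t^i$ in $\mathcal{G}^*$. Replacing each edge $w_{\ell}\to w_{\ell+1}$ by its witnessing directed path in $\mathcal{G}_I$ and concatenating yields a directed walk from $v$ to $S_t^i$ in $\mathcal{G}_I$. Since $\mathcal{G}_I$ is acyclic, this directed walk contains (in fact coincides with) a directed path from $v$ to $S_t^i$, so $v\in AN_{\mathcal{G}_I}(S_t^i)$. This establishes $AN_{\mathcal{G}^*}(S_t^i)\subseteq AN_{\mathcal{G}_I}(S_t^i)$ and hence, combined with the first paragraph, the claim.

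The only genuine obstacle is the bookkeeping in the second paragraph: one must verify that the construction in Proposition \ref{prop:ExtendedDAG} only joins pairs $u,w$ already linked by a directed path in $\mathcal{G}_I$, and that no added edge is instantaneous (which could otherwise be deleted, or worse introduce a backward-in-time shortcut). Everything else reduces to the routine fact that concatenation of witnessing paths preserves ancestry in a DAG.
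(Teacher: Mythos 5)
Your proof is correct and takes essentially the same route as the paper's: both rest on the observation that every edge of the transformed graph is witnessed in $\mathcal{G}_I$ either by an original non-instantaneous edge or by an edge into an instantaneous ancestor followed by the instantaneous directed path, and then conclude by transitivity of ancestry. Your factoring through $\mathcal{G}^*$ and the explicit walk-to-path concatenation merely spell out what the paper's terser two-sentence argument (phrased directly in terms of parents of contemporaneous ancestors in $\mathcal{G}_{II}$) leaves implicit.
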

An idea to generalize Theorem \ref{thm:GMPWithout} to the scenario with instantaneous effects could be to show that $d$-separation statements translate from $\mathcal{G}_I$ to $\mathcal{G}_{II}$, and then use said theorem in the context of correlated innovations. Unfortunately, $d$-separation statements in  $\mathcal{G}_I$ do not necessarily hold in  $\mathcal{G}_{II}$, as we can see in Example \ref{example:DsepWith}.
\begin{example}\label{example:DsepWith}
    Consider  the 2-dimensional VAR(1) process with instantaneous effects: 
    \begin{equation}\label{eq:ExampleDsepWith}
        \begin{cases}
            X_t=\frac{1}{2}X_{t-1}+\varepsilon_t^X\\
            Y_t=\frac{1}{3}X_{t}+\frac{1}{2}Y_{t-1}+\varepsilon_t^Y
        \end{cases},
    \end{equation}
    with $\varepsilon_t\overset{\text{i.i.d.}}{\sim}\mathcal{N}(0,I_2)$. Its associated full-time DAG $\mathcal{G}_I$ can be seen in Figure \ref{fig:ExampleDsepWith}.     The distribution-equivalent VAR(1) process without instantaneous effects is:
    \begin{equation}\label{eq:ExampleDsepWithout}
        \begin{cases}
            X_t=\frac{1}{2}X_{t-1}+\delta_t^X\\
            Y_t=\frac{1}{6}X_{t-1}+\frac{1}{2}Y_{t-1}+\delta_t^Y
        \end{cases}\text{ with } \delta_t\overset{\text{i.i.d.}}{\sim}\mathcal{N}\left(\begin{pmatrix}
            0\\0\end{pmatrix},\frac{1}{9}\begin{pmatrix}9 & 3 \\ 3 & 10   \end{pmatrix}\right),
    \end{equation}
    and its correspondent full-time DAG $\mathcal{G}_{II}$ can be found in Figure \ref{fig:ExampleDsepWithout}.
    \begin{figure}[ht]
        \centering
        \begin{tikzpicture}[
        main/.style={draw, circle, thick, minimum size=12mm}
        ]
        \node[main] (xt) {$X_t$};
        \node[main] (yt) [below = of xt] {$Y_t$};
        \node[main] (xt-1) [left =  of xt]{$X_{t-1}$};
        \node[main] (yt-1) [below = of xt-1] {$Y_{t-1}$};
        \node[main] (xt-2) [left =  of xt-1]{$X_{t-2}$};
        \node[main] (yt-2) [below = of xt-2] {$Y_{t-2}$};
        
        \node[main] (xt+1) [right =  of xt]{$X_{t+1}$};
        \node[main] (yt+1) [below = of xt+1] {$Y_{t+1}$};
        \node[main, draw=none] (top) [right = of xt+1] {};
        \node[main, draw=none] (bot) [below = of top] {};
        \node[main, draw=none] (pasttop) [left = of xt-2] {};
        \node[main, draw=none] (pastbot) [below = of pasttop] {};
        \draw[-{Stealth[length=2.5mm]}] (xt.south) -- node[right] {$\frac{1}{3}$} (yt.north);
        \draw[-{Stealth[length=2.5mm]}] (xt-1.east) -- node[above] {$\frac{1}{2}$} (xt.west);
        \draw[-{Stealth[length=2.5mm]}] (yt-1.east) -- node[above] {$\frac{1}{2}$} (yt.west);
        \draw[-{Stealth[length=2.5mm]}] (xt-1.south) -- (yt-1.north);
        \draw[-{Stealth[length=2.5mm]}] (xt.east) -- (xt+1.west);
        \draw[-{Stealth[length=2.5mm]}] (yt.east) -- (yt+1.west);
        \draw[-{Stealth[length=2.5mm]}] (xt-2.east) -- (xt-1.west);
        \draw[-{Stealth[length=2.5mm]}] (yt-2.east) -- (yt-1.west);
        \draw[-{Stealth[length=2.5mm]}] (xt-2.south) -- (yt-2.north);
        \draw[-{Stealth[length=2.5mm]}] (xt+1.south) --  (yt+1.north);
        \draw[-{Stealth[length=2.5mm]}] (xt+1.east) -- (top.west);.
        \draw[-{Stealth[length=2.5mm]}] (yt+1.east) -- (bot.west);
        \draw[-{Stealth[length=2.5mm]}] (pasttop.east) -- (xt-2.west);
        \draw[-{Stealth[length=2.5mm]}] (pastbot.east) -- (yt-2.west);
    \end{tikzpicture}
    \caption[Full-time DAG  of the process with instantaneous effects defined in  \eqref{eq:ExampleDsepWith}.]{Full-time DAG $\mathcal{G}_I$ of the time series with instantaneous effects defined in Equation \eqref{eq:ExampleDsepWith}. The coefficients of the edges correspond to the coefficients of the process.}
    \label{fig:ExampleDsepWith}
    \end{figure}
\begin{figure}[ht]
        \centering
        \begin{tikzpicture}[
        main/.style={draw, circle, thick, minimum size=12mm}
        ]
        \node[main] (xt) {$X_t$};
        \node[main] (yt) [below = of xt] {$Y_t$};
        \node[main] (xt-1) [left =  of xt]{$X_{t-1}$};
        \node[main] (yt-1) [below = of xt-1] {$Y_{t-1}$};
        \node[main] (xt-2) [left =  of xt-1]{$X_{t-2}$};
        \node[main] (yt-2) [below = of xt-2] {$Y_{t-2}$};
        
        \node[main] (xt+1) [right =  of xt]{$X_{t+1}$};
        \node[main] (yt+1) [below = of xt+1] {$Y_{t+1}$};
        \node[main, draw=none] (top) [right = of xt+1] {};
        \node[main, draw=none] (bot) [below = of top] {};
        \node[main, draw=none] (pasttop) [left = of xt-2] {};
        \node[main, draw=none] (pastbot) [below = of pasttop] {};
        \draw[-{Stealth[length=2.5mm]}] (xt.south) --  (yt+1.north);
        \draw[-{Stealth[length=2.5mm]}] (xt-1.east) -- node[above] {$\frac{1}{2}$} (xt.west);
        \draw[-{Stealth[length=2.5mm]}] (yt-1.east) -- node[above] {$\frac{1}{2}$} (yt.west);
        \draw[-{Stealth[length=2.5mm]}] (xt-1.south) -- node[right, above] {$\frac{1}{6}$} (yt.north);
        \draw[-{Stealth[length=2.5mm]}] (xt.east) -- (xt+1.west);
        \draw[-{Stealth[length=2.5mm]}] (yt.east) -- (yt+1.west);
        \draw[-{Stealth[length=2.5mm]}] (xt-2.east) -- (xt-1.west);
        \draw[-{Stealth[length=2.5mm]}] (yt-2.east) -- (yt-1.west);
        \draw[-{Stealth[length=2.5mm]}] (xt-2.south) -- (yt-1.north);
        \draw[-{Stealth[length=2.5mm]}] (xt+1.south) --  (bot.north);
        \draw[-{Stealth[length=2.5mm]}] (xt+1.east) -- (top.west);.
        \draw[-{Stealth[length=2.5mm]}] (yt+1.east) -- (bot.west);
        \draw[-{Stealth[length=2.5mm]}] (pasttop.east) -- (xt-2.west);
        \draw[-{Stealth[length=2.5mm]}] (pastbot.east) -- (yt-2.west);
        \draw[-{Stealth[length=2.5mm]}] (pasttop.south) -- (yt-2.north);
    \end{tikzpicture}
    \caption[Full-time DAG  of the process in \eqref{eq:ExampleDsepWith} on its representation without instantaneous effects.]{Full-time DAG $\mathcal{G}_{II}$ of the time series without instantaneous effects defined in Equation \eqref{eq:ExampleDsepWithout}. The coefficients of the edges correspond to the coefficients of the process.}
    \label{fig:ExampleDsepWithout}
\end{figure}
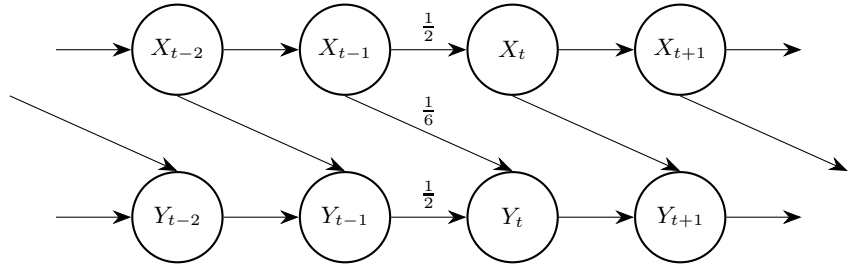 

We see in this example how $d$-separation statements are not inherited by $\mathcal{G}_{II}$ from $\mathcal{G}_I$. Indeed, we have that $Y_t\perp_{\mathcal{G}_I} X_{t-1}|\{X_t,Y_{t-1}\}$, but it does not hold that $Y_t\not\perp_{\mathcal{G}_{II}} X_{t-1}|\{X_t,Y_{t-1}\}$ as there is an edge from $X_{t-1}$ to $Y_t$ in $\mathcal{G}_{II}$. Here we can also see an application of Corollary \ref{coro:Ancestors}, as we have  $AN_{\mathcal{G}_{II}}(X_t)=\{X_s\colon s\leq t\}=AN_{\mathcal{G}_{I}}(X_t)$ and $AN_{\mathcal{G}_{II}}(Y_t)=\{X_{s-1},Y_{s}\colon s\leq t\}\subsetneq\{X_{s},Y_{s}\colon s\leq t\}=AN_{\mathcal{G}_{I}}(Y_t)$.    
\end{example}
As we cannot directly translate $d$-separation statement from $\mathcal{G}_I$ to $\mathcal{G}_{II}$, our approach is to study how the instantaneous effects modify the  correlation structure of the innovations when the process is expressed without instantaneous effects. To that extent, we have the following proposition.
\begin{restatable}{proposition}{propVarianceInnovations}\label{prop:VarianceInnovations}
    Consider a VAR($p$) process with instantaneous effects as per Definition \ref{def:VARWith}, with full-time DAG $\mathcal{G}_I$,  and take the distribution-equivalent VAR($p$) process without instantaneous effects as defined in Equation \eqref{eq:VAREquiv}. If  $S_t^i$ to $S_t^j$  have no common instantaneous ancestors ($AN_{\mathcal{G}_I}(S_t^i)_{[t]}\cap AN_{\mathcal{G}_I}(S_t^j)_{[t]}=\emptyset$), then $\delta_t^i$ and $\delta_t^j$ are independent.
\end{restatable}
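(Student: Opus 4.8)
The plan is to track how the mixing matrix $M \defeq (I_d - A_0)^{-1}$, which produces the correlated innovations via $\delta_t = M\varepsilon_t$, inherits its sparsity pattern from instantaneous ancestry in $\mathcal{G}_I$, and then to reduce the claimed independence to the joint independence of the components of $\varepsilon_t$. First I would reorder the coordinates along a topological ordering of the instantaneous DAG, exactly as in the remark after Definition \ref{def:VARWith}, so that $A_0$ becomes strictly lower triangular and hence nilpotent. This turns the Neumann expansion $M = \sum_{m\geq 0} A_0^m$ into a finite sum, so $M$ is well defined and its entries are honest finite sums of products of entries of $A_0$.

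The key step is a support lemma for $M$: I claim $M_{ik}\neq 0$ implies $S_t^k \in AN_{\mathcal{G}_I}(S_t^i)_{[t]}$. This holds because $(A_0)_{ik}\neq 0$ encodes precisely the instantaneous edge $S_t^k \to S_t^i$, so $(A_0^m)_{ik}$ is a sum over instantaneous directed walks of length $m$ from $k$ to $i$; the absence of any such walk forces every power to vanish in position $(i,k)$, while the term $m=0$ yields $M_{ii}=1$, consistent with the convention that a node is its own ancestor. Expanding $\delta_t^i = \sum_k M_{ik}\varepsilon_t^k$, only indices $k$ with $S_t^k \in AN_{\mathcal{G}_I}(S_t^i)_{[t]}$ contribute; hence $\delta_t^i$ is a measurable (indeed linear) function of the innovation components $\{\varepsilon_t^k : S_t^k \in AN_{\mathcal{G}_I}(S_t^i)_{[t]}\}$, and symmetrically for $\delta_t^j$.

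To finish, the hypothesis $AN_{\mathcal{G}_I}(S_t^i)_{[t]} \cap AN_{\mathcal{G}_I}(S_t^j)_{[t]} = \emptyset$ says that $\delta_t^i$ and $\delta_t^j$ are functions of disjoint subfamilies of $\{\varepsilon_t^k\}_k$; since the components of $\varepsilon_t$ are jointly independent, disjoint subfamilies are independent, and measurable functions of independent random variables are independent, so $\delta_t^i \indep \delta_t^j$. I expect the support lemma to be the only real obstacle, as it is what converts disjoint ancestor sets into disjoint innovation supports. I would also stress that the joint independence of the components of $\varepsilon_t$ is used in an essential way: working instead through the covariance $M\,\Gamma\,M^\top$ (with $\Gamma=\mathbb{V}\text{ar}(\varepsilon_t)$ diagonal) would only yield $\mathrm{Cov}(\delta_t^i,\delta_t^j)=0$, i.e. uncorrelatedness, which is strictly weaker than the independence asserted.
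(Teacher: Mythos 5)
Your proposal is correct and takes essentially the same route as the paper: both proofs reduce the claim to the fact that the support of row $i$ of $(I_d-A_0)^{-1}$ is contained in the instantaneous ancestry of $S_t^i$, so that disjoint instantaneous ancestor sets yield disjoint collections of innovations entering $\delta_t^i$ and $\delta_t^j$, after which joint independence of the components of $\varepsilon_t$ finishes the argument. The only difference is in how that support fact is justified --- the paper invokes the graph interpretation of the entries of $(I_d-A_0)^{-1}$ as instantaneous total causal effects, established by induction in the proof of Proposition \ref{prop:ExtendedDAG}, whereas you re-derive it self-containedly from the Neumann series $\sum_{m\geq 0}A_0^m$ and walk counting --- and your closing remark is apt: since the innovations are only assumed to have finite second moments and jointly independent components (not Gaussianity), a covariance computation would give only uncorrelatedness, which is strictly weaker than the asserted independence.
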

Proposition \ref{prop:VarianceInnovations} allows us to identify which innovations in the non-instantaneous effects representation remain independent. This will be key to generalize Theorem \ref{thm:GMPWithout}, for what we present the following Lemma.
\begin{restatable}{lemma}{lemmaGeneralization}\label{lemma:Generalization}
    Consider a VAR($p$) process with instantaneous effects as per Definition \ref{def:VARWith}. Let $\mathcal{G}_I$ be its associated full-time DAG. Let $A,B$ be two disjoint sets of nodes such that they have no common ancestors: $AN_{\mathcal{G}_I}(A)\cap AN_{\mathcal{G}_I}(B)=\emptyset$. Then $A\indep B$.
\end{restatable}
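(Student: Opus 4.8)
The plan is to reduce the claim to an independence statement about the underlying innovations, exploiting the standard fact that a variable in an acyclic linear SCM is a measurable function of the noise terms sitting at its ancestors, and then to transfer the disjointness hypothesis on ancestors to a disjointness of innovation supports. Concretely, I would work with the distribution-equivalent representation without instantaneous effects from Equation \eqref{eq:VAREquiv} and its MA($\infty$) expansion $S_t=\sum_{k\ge 0}\Psi_k\delta_{t-k}$, which converges almost surely and in $L^2$ because the stationarity condition forces the coefficients $\Psi_k$ to decay geometrically. Reading off this expansion causally, a coordinate $\delta_s^j$ can influence $S_t^i$ only through a product of autoregressive coefficients along a directed path $S_s^j\to\cdots\to S_t^i$ in $\mathcal{G}_{II}$; hence non-ancestral innovations carry zero coefficient, and $S_t^i$ is measurable with respect to $\sigma(\{\delta_s^j : S_s^j\in AN_{\mathcal{G}_{II}}(S_t^i)\})$.

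Writing $\Delta_A := \{\delta_s^j : S_s^j\in AN_{\mathcal{G}_{II}}(A)\}$ and $\Delta_B$ analogously, the previous paragraph makes $(S_a)_{a\in A}$ a $\sigma(\Delta_A)$-measurable vector and $(S_b)_{b\in B}$ a $\sigma(\Delta_B)$-measurable vector, so it suffices to prove $\sigma(\Delta_A)\indep\sigma(\Delta_B)$. First I would invoke Corollary \ref{coro:Ancestors} to pass from $\mathcal{G}_{II}$-ancestors to $\mathcal{G}_I$-ancestors: since $AN_{\mathcal{G}_{II}}(A)\subseteq AN_{\mathcal{G}_I}(A)$ and $AN_{\mathcal{G}_{II}}(B)\subseteq AN_{\mathcal{G}_I}(B)$, the hypothesis $AN_{\mathcal{G}_I}(A)\cap AN_{\mathcal{G}_I}(B)=\emptyset$ gives $AN_{\mathcal{G}_{II}}(A)\cap AN_{\mathcal{G}_{II}}(B)=\emptyset$, so $\Delta_A$ and $\Delta_B$ are disjoint collections of innovation coordinates. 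Independence across distinct time slices is then immediate because $(\delta_t)_{t\in\mathbb{Z}}$ is i.i.d., and the only coupling left to rule out is within a common time $t$.

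For a fixed $t$, take indices $j,j'$ with $S_t^j\in AN_{\mathcal{G}_I}(A)$ and $S_t^{j'}\in AN_{\mathcal{G}_I}(B)$. These nodes can share no common instantaneous ancestor: such an ancestor would, by transitivity of the ancestor relation, lie in $AN_{\mathcal{G}_I}(A)\cap AN_{\mathcal{G}_I}(B)$, contradicting disjointness. Proposition \ref{prop:VarianceInnovations} then yields $\delta_t^j\indep\delta_t^{j'}$ for every such pair. The main obstacle is precisely here: Proposition \ref{prop:VarianceInnovations} delivers only \emph{pairwise} independence, whereas I need the whole block of coordinates relevant to $A$ to be jointly independent of the whole block relevant to $B$, and pairwise independence does not imply this for non-Gaussian innovations.

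I would close this gap by unrolling the mechanism behind Proposition \ref{prop:VarianceInnovations}, namely $\delta_t=(I_d-A_0)^{-1}\varepsilon_t$ with $A_0$ strictly lower triangular after the topological reordering of the instantaneous DAG. The Neumann series for $(I_d-A_0)^{-1}$ shows that $\delta_t^j$ is a linear function of exactly those $\varepsilon_t^k$ with $S_t^k\in AN_{\mathcal{G}_I}(S_t^j)_{[t]}$. Since instantaneous ancestors of $S_t^j$ are themselves ancestors of $S_t^j$, and hence of $A$, the entire $A$-block at time $t$ is a function of $\varepsilon$-coordinates lying in $AN_{\mathcal{G}_I}(A)$, while the $B$-block is a function of $\varepsilon$-coordinates lying in $AN_{\mathcal{G}_I}(B)$; by disjointness these two $\varepsilon$-supports are disjoint coordinates of the \emph{genuinely jointly independent} process $\varepsilon_t$, so the two blocks are functions of independent inputs and are therefore independent. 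Indeed, composing the two layers shows that each $S_t^i$ is ultimately a measurable function of $\{\varepsilon_s^k : S_s^k\in AN_{\mathcal{G}_I}(S_t^i)\}$, and $\varepsilon$ has jointly independent coordinates, so combining across time slices gives $\sigma(\Delta_A)\indep\sigma(\Delta_B)$ and thus $A\indep B$.
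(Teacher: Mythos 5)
Your proposal is correct, and at the structural level it follows the same route as the paper's own proof: both pass to the distribution-equivalent representation without instantaneous effects of Equation \eqref{eq:VAREquiv}, expand $A$ and $B$ as MA($\infty$) combinations of the innovations $\delta$, invoke Corollary \ref{coro:Ancestors} to turn the hypothesis $AN_{\mathcal{G}_I}(A)\cap AN_{\mathcal{G}_I}(B)=\emptyset$ into disjointness of the two collections of $\delta$-coordinates, and then address the remaining danger of contemporaneous correlation among the $\delta$'s. Where you diverge is in this last step, and your version is tighter. The paper cites Proposition \ref{prop:VarianceInnovations} to say that any contemporaneous innovations appearing in the two expansions are independent, and from there concludes that all innovations entering the two representations are ``all independent, and thus $A\indep B$''; read literally, that proposition only delivers \emph{pairwise} independence of $\delta_t^i$ and $\delta_t^j$, and, as you correctly flag, pairwise independence of coordinates across the two blocks does not by itself give independence of the two generated $\sigma$-algebras when the innovations are non-Gaussian. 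You close this gap by descending one level further: using the strictly lower-triangular structure of $A_0$ after topological reordering, each $\delta_t^j$ is a linear function of exactly the $\varepsilon_t^k$ attached to instantaneous ancestors of $S_t^j$; since ancestors of ancestors are ancestors, the entire $A$-block and the entire $B$-block become measurable functions of two \emph{disjoint} sub-collections of the jointly independent family $\{\varepsilon_s^k\}$, which yields genuine block independence, not merely pairwise independence. This is precisely the mechanism inside the paper's proof of Proposition \ref{prop:VarianceInnovations} (disjoint $\varepsilon$-supports), so your argument is less a new idea than the correct way to cash in the citation the paper makes; it is the version of the final step one would want to keep.
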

With all these results at hand, we can now finally state what was the first goal of this work: The global Markov property for VAR processes with instantaneous effects.
\begin{restatable}[Global Markov property for VAR processes with instantaneous effects]{theorem}{thmGMPWith}\label{thm:GMPWith} 
    Consider $p\in\mathbb{N}$, and $S$ a  VAR($p$) process with instantaneous effects as defined in \ref{def:VARWith}. Consider finite pairwise disjoint sets $A,B,C$ of nodes of the full-time DAG $\mathcal{G}_{I}$.  If $A$ and $C$ are d-separated by $B$ in $\mathcal{G}_{I}$, then  $A\indep C|B$.
\end{restatable}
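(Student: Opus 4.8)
The plan is to reduce the conditional independence statement to the marginal independence already available in Lemma \ref{lemma:Generalization}, using the moralization characterization of $d$-separation together with the extension results. Throughout I would keep working with the full-time DAG $\mathcal{G}_I$ directly, which carries the jointly independent innovations $\varepsilon_t$, rather than passing to $\mathcal{G}_{II}$: Example \ref{example:DsepWith} shows that $d$-separation is not preserved under the transformation to $\mathcal{G}_{II}$, so the representation $\mathcal{G}_{II}$ and Propositions \ref{prop:ExtendedDAG}--\ref{prop:VarianceInnovations} would enter only implicitly, through the proof of Lemma \ref{lemma:Generalization}.

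First I would restrict attention to the ancestral set $W\defeq AN_{\mathcal{G}_I}(A\cup B\cup C)$. Since each node of $\mathcal{G}_I$ is a measurable function of the innovations attached to its ancestors, the joint law of $A\cup B\cup C$ is determined by the sub-process on $W$, so it is enough to argue within $(\mathcal{G}_I)_W$. By Proposition \ref{prop:SeparationMoralized}, the hypothesis $A\perp_{\mathcal{G}_I} C\mid B$ is equivalent to $B$ separating $A$ and $C$ in the moralized graph $M\defeq((\mathcal{G}_I)_W)^m$. I would then invoke Proposition \ref{prop:Extension} (and, when $(\mathcal{G}_I)_W^m$ splits into two components, Corollary \ref{coro:Extension}) to enlarge $A$ and $C$ to disjoint sets $A^+\supseteq A$ and $C^+\supseteq C$ with $A^+\cup B\cup C^+=W$ and $A^+\perp_{\mathcal{G}_I} C^+\mid B$. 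As $A\subseteq A^+$ and $C\subseteq C^+$, it then suffices to prove the stronger statement $A^+\indep C^+\mid B$.

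The heart of the argument is to upgrade the marginal criterion of Lemma \ref{lemma:Generalization} to the conditional statement $A^+\indep C^+\mid B$. The key observation is that, because $B$ separates $A^+$ from $C^+$ in the undirected graph $M$ and $W\setminus B=A^+\sqcup C^+$, no edge of $M$ joins $A^+$ to $C^+$; in terms of $\mathcal{G}_I$ this means that no $A^+$-node and $C^+$-node are adjacent and none share a common child inside $W$. I would use this to decompose the joint law of $W$ across the cut $B$: conditioning on $B$ should sever every remaining channel of dependence between the two blocks, leaving two families driven by disjoint, mutually independent collections of innovations, to which Lemma \ref{lemma:Generalization} applies on each side. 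In the linear--Gaussian case this is the classical factorization argument of Lauritzen; in general I would realize the conditioning through the recursive structural equations on $(\mathcal{G}_I)_W$ so as to avoid assuming that densities exist.

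I expect this last step to be the main obstacle, for two reasons. First, $W$ is infinite, as it extends to $t=-\infty$, so the standard finite-DAG moralization and factorization theorem cannot be quoted verbatim; I would need to control the contribution of the remote past, presumably through a truncation-and-limit argument that exploits the finite lag $p$ and the stationarity and summability built into Definition \ref{def:VARWith}. Second, the innovations are only assumed to have finite second moments and independent components, not to be Gaussian, so the conditioning step must be carried out at the level of the structural equations rather than through conditional densities, and one must verify that conditioning on $B$ does not reactivate a path between the two blocks --- which is precisely what the moral-graph separation, supplied by Proposition \ref{prop:SeparationMoralized}, rules out.
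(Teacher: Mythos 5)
Your opening reductions (restriction to an ancestral set, the moralization characterization of Proposition \ref{prop:SeparationMoralized}, and the extension step of Proposition \ref{prop:Extension}) coincide with the setup of the paper's proof, except that the paper performs them inside a \emph{finite} time window rather than on the infinite ancestral closure $W$. The genuine gap is the step you yourself call the heart of the argument: the claim that, because $B$ separates $A^+$ from $C^+$ in the moral graph, conditioning on $B$ leaves ``two families driven by disjoint, mutually independent collections of innovations'' to which Lemma \ref{lemma:Generalization} can be applied on each side. This claim is false as stated. Being $d$-separated given $B$ does not prevent $A^+$ and $C^+$ from sharing ancestors: in the configuration $a \leftarrow b_1 \leftarrow D \to b_2 \to c$ with $b_1,b_2\in B$, $a\in A^+$, $c\in C^+$, the node $D$ is a common ancestor of both sides, and the innovation attached to $D$ enters the MA($\infty$) representation of both $a$ and $c$; conditioning on $B$ does not remove it. So after conditioning the two blocks are \emph{not} functions of disjoint innovation sets, and Lemma \ref{lemma:Generalization}, which is a purely marginal statement about sets with disjoint ancestries, has no conditional version you can invoke. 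What the $d$-separation hypothesis guarantees is that every such shared source is \emph{blocked} by $B$; converting ``blocked'' into a conditional independence of the stationary law is precisely the content of the theorem, not an observation that can be made in passing.

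The two obstacles you flag, the infiniteness of $W$ and the absence of densities, are exactly where this conversion has to happen, and your proposal leaves both unresolved (``presumably through a truncation-and-limit argument'', ``realize the conditioning through the recursive structural equations''). The paper resolves them by a different mechanism: it fixes a finite window $[s_0-q,t_0]$ with $q>p$ so that a finite past segment $V^0$ carries all the information of the infinite history, and then argues by induction on the number $n=|V^+|$ of nodes in the window, removing a sink node $\lambda$ at each step and combining the induction hypothesis with the semi-graphoid axioms (weak union and contraction). Lemma \ref{lemma:Generalization} enters only at three isolated points: the base case $n=2$, and the two structural-equation independence statements in Equations \eqref{eq:IndependenceInA} and \eqref{eq:IndependenceInB}. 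If you tried to make your factorization-across-the-cut rigorous at the level of structural equations, without densities and with an infinite past, you would be pushed into essentially this induction; as it stands, your proposal assumes the conclusion at its central step.
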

The proof of this Theorem, which can be found in Appendix \ref{sec:AppendixProofs}, is an adapted version of the proof of Theorem \ref{thm:GMPWithout} provided in \cite{thams2022identifying}, modified to account for the existence of instantaneous effects.
\subsection{The global Markov property for VARMA processes with instantaneous effects}\label{sec:GMPVarma}
Having dealt with the global Markov property for VAR processes with instantaneous effects, the natural next step is to consider a broader family of time series which includes them: VARMA processes with instantaneous effects.
\begin{definition}[VARMA process with instantaneous effects]\label{def:VARMAWith}
    We say that a  time series $S$ follows a VARMA($p,q$) process with instantaneous effects if there exist $p,q\in\mathbb{N}$ and coefficient matrices $A_0,A_1,\ldots,A_p,B_1,,\ldots,B_q\in\mathbb{R}^{d\times d}$ such that $diag (A_0)=0$ and for all $t\in\mathbb{Z}$: $$S_t=A_0S_t+A_1S_{t-1}+\ldots +A_pS_{t-p}+\varepsilon_t+B_1\varepsilon_{t-1}+\ldots B_q\varepsilon_{t-q},$$ where $A_0,A_1,\ldots,A_p$ are such that $\det ((I_d-A_0)\lambda^p-A_1\lambda^{p-1}-\ldots -A_p)=0$ implies $|\lambda|<1$, and   $\varepsilon_t$ is an i.i.d. process with finite second moments and jointly independent components. We define the full-time graph of this process $\mathcal{G}_{full}$ as an infinite directed graph over nodes $\{S_t^i,\varepsilon_t^i\colon i\in\{1,\ldots,d\},\ t\in\mathbb{Z}\}$ such that:
    \begin{itemize}
        \item For $t\in\mathbb{Z}$ and $k\in\{0,\ldots,p\}$ there is an edge from $S_{t-k}^j$ to $S_t^i$ if $(A_k)_{i,j}\neq 0$,
        \item for $t\in\mathbb{Z}$ and $l\in\{1,\ldots,q\}$ there is an edge from $\varepsilon_{t-l}^j$ to $S_t^i$ if $(B_l)_{i,j}\neq 0$,
        \item for $t\in\mathbb{Z}$ and $i\in\{1,\ldots,d\}$ there is an edge from $\varepsilon_{t}^i$ to $S_t^i$.
    \end{itemize}
\end{definition}
The problem with the full-time DAG of a VARMA process is that it has nodes representing the innovations. This is clearly sub-optimal, as the innovations are an exogenous part of the process. We would  like to construct a graph  over the endogenous nodes which also conveys the information related to the exogenous ones (the innovations). To do so we will use latent projections, introduced in Definition \ref{def:LatentProjection}. The problem with this definition is that it does not apply directly to the full-time DAG of a VARMA process, as this DAG is infinite. Nevertheless, due to the repetitive structure of this DAG, we can consider the latent projection of a large-enough time-frame, and then iterate the structure of the resulting ADMG. We then obtain the full-time marginalized ADMG.
\begin{definition}[Full-time marginalized ADMG of a VARMA process]\label{def:FullTimeMarginalizedADMG}
    Consider a time series $S$ following a VARMA($ p, q$) process as per Definition \ref{def:VARMAWith}, and let $\mathcal{G}_{full}$ be its full-time DAG. Its full-time marginalized ADMG is an infinite ADMG over the nodes $\{S_t^i\colon t\in\mathbb{Z}, i\in\{1,\ldots,d\}\}$ constructed the following way: For an arbitrary $t^*\in\mathbb{Z}$, consider the proper subgraph of $\mathcal{G}_{full}$ over nodes  $\{S_t^i,\varepsilon_t^i\colon t\in\{t^*-\max(p,q)-1,\ldots,t^*\}, i\in\{1,\ldots,d\}\}$, and then its latent projection over $\{S_t^i\colon t\in\{t^*-\max(p,q)-1,\ldots,t^*\}, i\in\{1,\ldots,d\}\}$. The full-time marginalized ADMG is then obtained by infinitely repeating the structure of this ADMG. We denote it by $(\mathcal{G}_{full})_S$, or simply by $\mathcal{G}_S$.
\end{definition}
\begin{remark}
    A more general definition of the marginalization of an infinite full-time DAG can be found in \cite[Def. 4]{thams2022identifying}. While this definition comes in the context of VAR processes, it can be straightforwardly generalized to the full-time DAG of a VARMA process.
\end{remark}
\begin{example}\label{example:GMPVarmaWith}
    Consider the following 2-dimensional VARMA(1,1) process with instantaneous effects: 
    \begin{equation}\label{eq:ExampleVarmaDAGWith}
        \begin{cases}
            X_t=\frac{1}{2}X_{t-1}+\varepsilon_t^X+\frac{1}{4}\varepsilon_{t-1}^Y\\
            Y_t=\frac{1}{5}X_t+ \frac{1}{3}X_{t-1}+\frac{1}{2}Y_{t-1}+\varepsilon_t^Y
        \end{cases},
    \end{equation}
    with $\varepsilon_t\overset{\text{i.i.d.}}{\sim}\mathcal{N}(0,I_2)$. Its associated full-time DAG $\mathcal{G}_{I}$ can be seen in Figure \ref{fig:ExampleVarmaDAGWith}. The full-time marginalized  ADMG associated with the process defined in Equation \eqref{eq:ExampleVarmaDAGWith}, which we denote by $\mathcal{G}_{I,S}$, can be found in Figure \ref{fig:ExampleVarmaADMGWith}. We see how this graph has both directed and bi-directed edges. 

    As explained before, the process introduced in Equation \eqref{eq:ExampleVarmaDAGWith} can be expressed in a distributional-equivalent form without instantaneous effects, as: 
    \begin{equation}
        \begin{cases}
            X_t=\frac{1}{2}X_{t-1}+\varepsilon_t^X+\frac{1}{4}\varepsilon_{t-1}^Y\\
            Y_t= \frac{13}{30}X_{t-1}+\frac{1}{2}Y_{t-1}+\frac{1}{5}\varepsilon_t^X +\varepsilon_t^Y+\frac{1}{20}\varepsilon^Y_{t-1}
        \end{cases},
        \label{eq:ExampleVarmaDAGWithEquiv}
    \end{equation}
    with associated full time DAG $\mathcal{G}_{II}$ which can be seen in Figure \ref{fig:ExampleVarmaDAGWithEquiv}, which marginalizes to the full-time ADMG $\mathcal{G}_{II,S}$ seen in Figure \ref{fig:ExampleVarmaADMGWithEquiv}.
    
    It is key to note how Corollary \ref{coro:Ancestors} still applies to these full-time DAGs, and therefore it also extends to the marginalized full-time ADMGs. Also worth mentioning is the fact that the ADMG in Figure \ref{fig:ExampleVarmaADMGWithEquiv} has some bi-directed edges which do not exist in the ADMG in Figure \ref{fig:ExampleVarmaADMGWith}. This owes to the fact that when we rewrite the process without instantaneous effects we introduce some direct edges coming from innovations which were previously only related to instantaneous ancestors, yielding new bi-directed edges. 
    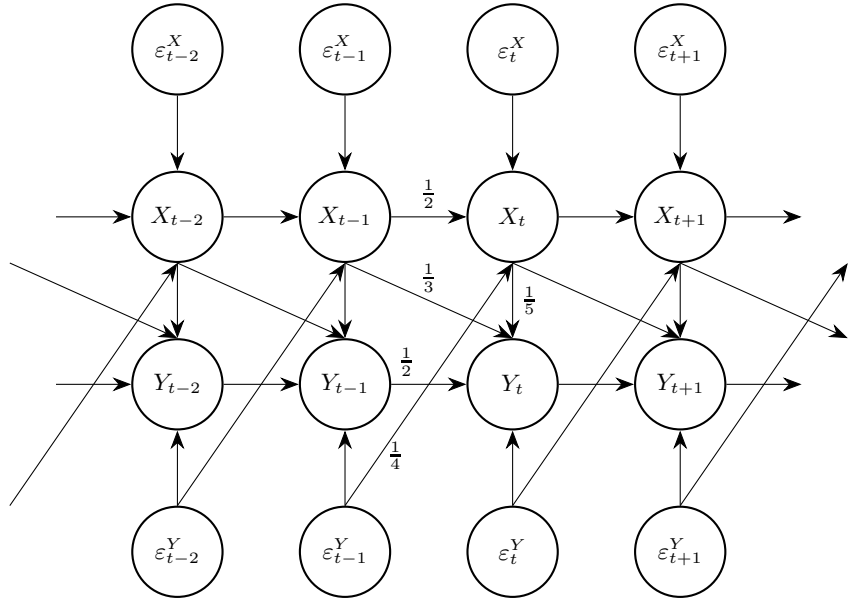
\begin{figure}[ht]
        \centering
        \begin{tikzpicture}[
        main/.style={draw, circle, thick, minimum size=12mm}
        ]
        \node[main] (xt) {$X_t$};
        \node[main] (yt) [below = of xt] {$Y_t$};
        \node[main] (xt-1) [left =  of xt]{$X_{t-1}$};
        \node[main] (yt-1) [below = of xt-1] {$Y_{t-1}$};
        \node[main] (xt-2) [left =  of xt-1]{$X_{t-2}$};
        \node[main] (yt-2) [below = of xt-2] {$Y_{t-2}$};
        \node[main] (xt+1) [right =  of xt]{$X_{t+1}$};
        \node[main] (yt+1) [below = of xt+1] {$Y_{t+1}$};
        \node[main, draw=none] (top) [right = of xt+1] {};
        \node[main, draw=none] (bot) [below = of top] {};
        \node[main, draw=none] (pasttop) [left = of xt-2] {};
        \node[main, draw=none] (pastbot) [below = of pasttop] {};
        \node[main] (ext)[above = of xt] {$\varepsilon^X_t$};
        \node[main] (eyt) [below = of yt] {$\varepsilon^Y_t$};
        \node[main] (ext-1) [left =  of ext]{$\varepsilon^X_{t-1}$};
        \node[main] (eyt-1) [below = of yt-1] {$\varepsilon^Y_{t-1}$};
        \node[main] (ext-2) [left =  of ext-1]{$\varepsilon^X_{t-2}$};
        \node[main] (eyt-2) [below = of yt-2] {$\varepsilon^Y_{t-2}$};
        \node[main] (ext+1) [right =  of ext]{$\varepsilon^X_{t+1}$};
        \node[main] (eyt+1) [below = of yt+1] {$\varepsilon^Y_{t+1}$};
        \node[main, draw=none] (epastbot) [below = of pastbot] {};
        
        \draw[-{Stealth[length=2.5mm]}] (xt.south) --  (yt+1.north);
        \draw[-{Stealth[length=2.5mm]}] (xt-1.east) -- node[above] {$\frac{1}{2}$} (xt.west);
        \draw[-{Stealth[length=2.5mm]}] (yt-1.east) -- node[pos=0.2, above] {$\frac{1}{2}$} (yt.west);
        \draw[-{Stealth[length=2.5mm]}] (xt-1.south) -- node[right, above] {$\frac{1}{3}$} (yt.north);
        \draw[-{Stealth[length=2.5mm]}] (xt.south) -- node[right] {$\frac{1}{5}$} (yt.north);
        \draw[-{Stealth[length=2.5mm]}] (xt.east) -- (xt+1.west);
        \draw[-{Stealth[length=2.5mm]}] (yt.east) -- (yt+1.west);
        \draw[-{Stealth[length=2.5mm]}] (xt-2.east) -- (xt-1.west);
        \draw[-{Stealth[length=2.5mm]}] (yt-2.east) -- (yt-1.west);
        \draw[-{Stealth[length=2.5mm]}] (xt-2.south) -- (yt-1.north);
        \draw[-{Stealth[length=2.5mm]}] (xt+1.south) --  (bot.north);
        \draw[-{Stealth[length=2.5mm]}] (xt+1.east) -- (top.west);.
        \draw[-{Stealth[length=2.5mm]}] (yt+1.east) -- (bot.west);
        \draw[-{Stealth[length=2.5mm]}] (pasttop.east) -- (xt-2.west);
        \draw[-{Stealth[length=2.5mm]}] (pastbot.east) -- (yt-2.west);
        \draw[-{Stealth[length=2.5mm]}] (pasttop.south) -- (yt-2.north);
        \draw[-{Stealth[length=2.5mm]}] (xt-1.south) -- (yt-1.north);
        \draw[-{Stealth[length=2.5mm]}] (xt-2.south) -- (yt-2.north);
        \draw[-{Stealth[length=2.5mm]}] (xt+1.south) -- (yt+1.north);
        \draw[-{Stealth[length=2.5mm]}] (ext.south) --  (xt.north);
        \draw[-{Stealth[length=2.5mm]}] (ext-1.south) --  (xt-1.north);
        \draw[-{Stealth[length=2.5mm]}] (ext-2.south) --  (xt-2.north);
        \draw[-{Stealth[length=2.5mm]}] (ext+1.south) --  (xt+1.north);
        \draw[-{Stealth[length=2.5mm]}] (eyt.north) --  (yt.south);
        \draw[-{Stealth[length=2.5mm]}] (eyt-1.north) --  (yt-1.south);
        \draw[-{Stealth[length=2.5mm]}] (eyt-2.north) --  (yt-2.south);
        \draw[-{Stealth[length=2.5mm]}] (eyt+1.north) --  (yt+1.south);
        \draw[-{Stealth[length=2.5mm]}] (eyt.north) --  (xt+1.south);
        \draw[-{Stealth[length=2.5mm]}] (eyt-1.north) -- node[pos=0.3,below] {$\frac{1}{4}$}  (xt.south);
        \draw[-{Stealth[length=2.5mm]}] (eyt-2.north) --  (xt-1.south);
        \draw[-{Stealth[length=2.5mm]}] (epastbot.north) --  (xt-2.south);
        \draw[-{Stealth[length=2.5mm]}] (eyt+1.north) --  (top.south);
        
    \end{tikzpicture}
    \caption[Full-time DAG  of the process  defined in  \eqref{eq:ExampleVarmaDAGWith}.]{Full-time DAG $\mathcal{G}_I$ of the time series  defined in Equation \eqref{eq:ExampleVarmaDAGWith}. The coefficients of the edges correspond to the coefficients of the process.}
    \label{fig:ExampleVarmaDAGWith}
    \end{figure}
    \begin{figure}[ht]
        \centering
        \begin{tikzpicture}[
        main/.style={draw, circle, thick, minimum size=12mm}
        ]
        \node[main] (xt) {$X_t$};
        \node[main] (yt) [below = of xt] {$Y_t$};
        \node[main] (xt-1) [left =  of xt]{$X_{t-1}$};
        \node[main] (yt-1) [below = of xt-1] {$Y_{t-1}$};
        \node[main] (xt-2) [left =  of xt-1]{$X_{t-2}$};
        \node[main] (yt-2) [below = of xt-2] {$Y_{t-2}$};
        
        \node[main] (xt+1) [right =  of xt]{$X_{t+1}$};
        \node[main] (yt+1) [below = of xt+1] {$Y_{t+1}$};
        \node[main, draw=none] (top) [right = of xt+1] {};
        \node[main, draw=none] (bot) [below = of top] {};
        \node[main, draw=none] (pasttop) [left = of xt-2] {};
        \node[main, draw=none] (pastbot) [below = of pasttop] {};
        \draw[-{Stealth[length=2.5mm]}] (xt.south) --  (yt+1.north);
        \draw[-{Stealth[length=2.5mm]}] (xt-1.east) --  (xt.west);
        \draw[-{Stealth[length=2.5mm]}] (yt-1.east) --  (yt.west);
        \draw[-{Stealth[length=2.5mm]}] (xt-1.south) --  (yt.north);
        \draw[-{Stealth[length=2.5mm]}] (xt.east) -- (xt+1.west);
        \draw[-{Stealth[length=2.5mm]}] (yt.east) -- (yt+1.west);
        \draw[-{Stealth[length=2.5mm]}] (xt-2.east) -- (xt-1.west);
        \draw[-{Stealth[length=2.5mm]}] (yt-2.east) -- (yt-1.west);
        \draw[-{Stealth[length=2.5mm]}] (xt-2.south) -- (yt-1.north);
        \draw[-{Stealth[length=2.5mm]}] (xt+1.south) --  (bot.north);
        \draw[-{Stealth[length=2.5mm]}] (xt+1.east) -- (top.west);.
        \draw[-{Stealth[length=2.5mm]}] (yt+1.east) -- (bot.west);
        \draw[-{Stealth[length=2.5mm]}] (pasttop.east) -- (xt-2.west);
        \draw[-{Stealth[length=2.5mm]}] (pastbot.east) -- (yt-2.west);
        \draw[-{Stealth[length=2.5mm]}] (pasttop.south) -- (yt-2.north);
        \draw[{Stealth[length=2.5mm]}-{Stealth[length=2.5mm]}] (yt-1.north) --  (xt.south);
        \draw[{Stealth[length=2.5mm]}-{Stealth[length=2.5mm]}] (yt-2.north) --  (xt-1.south);
        \draw[{Stealth[length=2.5mm]}-{Stealth[length=2.5mm]}] (yt.north) --  (xt+1.south);
        \draw[{Stealth[length=2.5mm]}-{Stealth[length=2.5mm]}] (yt+1.north) --  (top.south);
        \draw[{Stealth[length=2.5mm]}-{Stealth[length=2.5mm]}] (pastbot.north) --  (xt-2.south);
        \draw[-{Stealth[length=2.5mm]}] (xt.south) -- (yt.north);
        \draw[-{Stealth[length=2.5mm]}] (xt-1.south) -- (yt-1.north);
        \draw[-{Stealth[length=2.5mm]}] (xt-2.south) -- (yt-2.north);
        \draw[-{Stealth[length=2.5mm]}] (xt+1.south) -- (yt+1.north);
    \end{tikzpicture}
    \caption[Full-time marginalized  ADMG  of the process in \eqref{eq:ExampleVarmaDAGWith}.]{Full-time marginalized ADMG $\mathcal{G}_{I,S}$ of the time series  defined in Equation \eqref{eq:ExampleVarmaDAGWith}.}
    \label{fig:ExampleVarmaADMGWith}
    \end{figure}
    \begin{figure}[ht]
        \centering
        \begin{tikzpicture}[
        main/.style={draw, circle, thick, minimum size=12mm}
        ]
        \node[main] (xt) {$X_t$};
        \node[main] (yt) [below = of xt] {$Y_t$};
        \node[main] (xt-1) [left =  of xt]{$X_{t-1}$};
        \node[main] (yt-1) [below = of xt-1] {$Y_{t-1}$};
        \node[main] (xt-2) [left =  of xt-1]{$X_{t-2}$};
        \node[main] (yt-2) [below = of xt-2] {$Y_{t-2}$};
        \node[main] (xt+1) [right =  of xt]{$X_{t+1}$};
        \node[main] (yt+1) [below = of xt+1] {$Y_{t+1}$};
        \node[main, draw=none] (top) [right = of xt+1] {};
        \node[main, draw=none] (bot) [below = of top] {};
        \node[main, draw=none] (pasttop) [left = of xt-2] {};
        \node[main, draw=none] (pastbot) [below = of pasttop] {};
        \node[main] (ext)[above = of xt] {$\varepsilon^X_t$};
        \node[main] (eyt) [below = of yt] {$\varepsilon^Y_t$};
        \node[main] (ext-1) [left =  of ext]{$\varepsilon^X_{t-1}$};
        \node[main] (eyt-1) [below = of yt-1] {$\varepsilon^Y_{t-1}$};
        \node[main] (ext-2) [left =  of ext-1]{$\varepsilon^X_{t-2}$};
        \node[main] (eyt-2) [below = of yt-2] {$\varepsilon^Y_{t-2}$};
        \node[main] (ext+1) [right =  of ext]{$\varepsilon^X_{t+1}$};
        \node[main] (eyt+1) [below = of yt+1] {$\varepsilon^Y_{t+1}$};
        \node[main, draw=none] (epastbot) [below = of pastbot] {};
        
        \draw[-{Stealth[length=2.5mm]}] (xt.south) --  (yt+1.north);
        \draw[-{Stealth[length=2.5mm]}] (xt-1.east) -- node[above] {$\frac{1}{2}$} (xt.west);
        \draw[-{Stealth[length=2.5mm]}] (yt-1.east) -- node[pos=0.2, above] {$\frac{1}{2}$} (yt.west);
        \draw[-{Stealth[length=2.5mm]}] (xt-1.south) -- node[right, above] {$\frac{13}{30}$} (yt.north);
        \draw[-{Stealth[length=2.5mm]}] (xt.east) -- (xt+1.west);
        \draw[-{Stealth[length=2.5mm]}] (yt.east) -- (yt+1.west);
        \draw[-{Stealth[length=2.5mm]}] (xt-2.east) -- (xt-1.west);
        \draw[-{Stealth[length=2.5mm]}] (yt-2.east) -- (yt-1.west);
        \draw[-{Stealth[length=2.5mm]}] (xt-2.south) -- (yt-1.north);
        \draw[-{Stealth[length=2.5mm]}] (xt+1.south) --  (bot.north);
        \draw[-{Stealth[length=2.5mm]}] (xt+1.east) -- (top.west);.
        \draw[-{Stealth[length=2.5mm]}] (yt+1.east) -- (bot.west);
        \draw[-{Stealth[length=2.5mm]}] (pasttop.east) -- (xt-2.west);
        \draw[-{Stealth[length=2.5mm]}] (pastbot.east) -- (yt-2.west);
        \draw[-{Stealth[length=2.5mm]}] (pasttop.south) -- (yt-2.north);
        
        \draw[-{Stealth[length=2.5mm]}] (ext.south) --  (xt.north);
        \draw[-{Stealth[length=2.5mm]}] (ext-1.south) --  (xt-1.north);
        \draw[-{Stealth[length=2.5mm]}] (ext-2.south) --  (xt-2.north);
        \draw[-{Stealth[length=2.5mm]}] (ext+1.south) --  (xt+1.north);
        \draw[-{Stealth[length=2.5mm]}] (eyt.north) --  (yt.south);
        \draw[-{Stealth[length=2.5mm]}] (eyt-1.north) --  (yt-1.south);
        \draw[-{Stealth[length=2.5mm]}] (eyt-2.north) --  (yt-2.south);
        \draw[-{Stealth[length=2.5mm]}] (eyt+1.north) --  (yt+1.south);
        \draw[-{Stealth[length=2.5mm]}] (eyt.north) --  (xt+1.south);
        \draw[-{Stealth[length=2.5mm]}] (eyt-1.north) -- node[pos=0.4,below] {$\frac{1}{4}$}  (xt.south);
        \draw[-{Stealth[length=2.5mm]}] (eyt-2.north) --  (xt-1.south);
        \draw[-{Stealth[length=2.5mm]}] (epastbot.north) --  (xt-2.south);
        \draw[-{Stealth[length=2.5mm]}] (eyt+1.north) --  (top.south);
        \draw[-{Stealth[length=2.5mm]}] (ext.south) to [out=335,in=25] node[right,pos=0.2]{$\frac{1}{5}$}  (yt.north);
        \draw[-{Stealth[length=2.5mm]}] (ext-1.south) to [out=335,in=25]   (yt-1.north);
        \draw[-{Stealth[length=2.5mm]}] (ext-2.south) to [out=335,in=25]   (yt-2.north);
        \draw[-{Stealth[length=2.5mm]}] (ext+1.south) to [out=335,in=25]   (yt+1.north);
        \draw[-{Stealth[length=2.5mm]}] (eyt-1.north) -- node[pos=0.4,below] {$\frac{1}{20}$} (yt.south);
        \draw[-{Stealth[length=2.5mm]}] (eyt-2.north) --  (yt-1.south);
        \draw[-{Stealth[length=2.5mm]}] (eyt.north) --  (yt+1.south);
        \draw[-{Stealth[length=2.5mm]}] (eyt+1.north) --  (bot.south);
        \draw[-{Stealth[length=2.5mm]}] (epastbot.north) --  (yt-2.south);
    \end{tikzpicture}
    \caption[Full-time DAG  of the process  defined in  \eqref{eq:ExampleVarmaDAGWithEquiv}.]{Full-time DAG $\mathcal{G}_{II}$ of the time series  defined in Equation \eqref{eq:ExampleVarmaDAGWithEquiv}. The coefficients of the edges correspond to the coefficients of the process.}
    \label{fig:ExampleVarmaDAGWithEquiv}
    \end{figure}
    \begin{figure}[ht]
        \centering
        \begin{tikzpicture}[
        main/.style={draw, circle, thick, minimum size=12mm}
        ]
        \node[main] (xt) {$X_t$};
        \node[main] (yt) [below = of xt] {$Y_t$};
        \node[main] (xt-1) [left =  of xt]{$X_{t-1}$};
        \node[main] (yt-1) [below = of xt-1] {$Y_{t-1}$};
        \node[main] (xt-2) [left =  of xt-1]{$X_{t-2}$};
        \node[main] (yt-2) [below = of xt-2] {$Y_{t-2}$};
        
        \node[main] (xt+1) [right =  of xt]{$X_{t+1}$};
        \node[main] (yt+1) [below = of xt+1] {$Y_{t+1}$};
        \node[main, draw=none] (top) [right = of xt+1] {};
        \node[main, draw=none] (bot) [below = of top] {};
        \node[main, draw=none] (pasttop) [left = of xt-2] {};
        \node[main, draw=none] (pastbot) [below = of pasttop] {};
        \draw[-{Stealth[length=2.5mm]}] (xt.south) --  (yt+1.north);
        \draw[-{Stealth[length=2.5mm]}] (xt-1.east) --  (xt.west);
        \draw[-{Stealth[length=2.5mm]}] (yt-1.east) --  (yt.west);
        \draw[-{Stealth[length=2.5mm]}] (xt-1.south) --  (yt.north);
        \draw[-{Stealth[length=2.5mm]}] (xt.east) -- (xt+1.west);
        \draw[-{Stealth[length=2.5mm]}] (yt.east) -- (yt+1.west);
        \draw[-{Stealth[length=2.5mm]}] (xt-2.east) -- (xt-1.west);
        \draw[-{Stealth[length=2.5mm]}] (yt-2.east) -- (yt-1.west);
        \draw[-{Stealth[length=2.5mm]}] (xt-2.south) -- (yt-1.north);
        \draw[-{Stealth[length=2.5mm]}] (xt+1.south) --  (bot.north);
        \draw[-{Stealth[length=2.5mm]}] (xt+1.east) -- (top.west);.
        \draw[-{Stealth[length=2.5mm]}] (yt+1.east) -- (bot.west);
        \draw[-{Stealth[length=2.5mm]}] (pasttop.east) -- (xt-2.west);
        \draw[-{Stealth[length=2.5mm]}] (pastbot.east) -- (yt-2.west);
        \draw[-{Stealth[length=2.5mm]}] (pasttop.south) -- (yt-2.north);
        \draw[{Stealth[length=2.5mm]}-{Stealth[length=2.5mm]}] (yt-1.north) --  (xt.south);
        \draw[{Stealth[length=2.5mm]}-{Stealth[length=2.5mm]}] (yt-2.north) --  (xt-1.south);
        \draw[{Stealth[length=2.5mm]}-{Stealth[length=2.5mm]}] (yt.north) --  (xt+1.south);
        \draw[{Stealth[length=2.5mm]}-{Stealth[length=2.5mm]}] (yt+1.north) --  (top.south);
        \draw[{Stealth[length=2.5mm]}-{Stealth[length=2.5mm]}] (pastbot.north) --  (xt-2.south);
        \draw[{Stealth[length=2.5mm]}-{Stealth[length=2.5mm]}] (xt.south) -- (yt.north);
        \draw[{Stealth[length=2.5mm]}-{Stealth[length=2.5mm]}] (xt-1.south) -- (yt-1.north);
        \draw[{Stealth[length=2.5mm]}-{Stealth[length=2.5mm]}] (xt-2.south) -- (yt-2.north);
        \draw[{Stealth[length=2.5mm]}-{Stealth[length=2.5mm]}] (xt+1.south) -- (yt+1.north);
        \draw[{Stealth[length=2.5mm]}-{Stealth[length=2.5mm]}] (yt-1.east) to [out=305,in=235]  (yt.west);
        \draw[{Stealth[length=2.5mm]}-{Stealth[length=2.5mm]}] (yt-2.east) to [out=305,in=235]  (yt-1.west);
        \draw[{Stealth[length=2.5mm]}-{Stealth[length=2.5mm]}] (yt.east) to [out=305,in=235]  (yt+1.west);
        \draw[{Stealth[length=2.5mm]}-{Stealth[length=2.5mm]}] (yt+1.east) to [out=305,in=235]  (bot.west);
        \draw[{Stealth[length=2.5mm]}-{Stealth[length=2.5mm]}] (pastbot.east) to [out=305,in=235]  (yt-2.west);
    \end{tikzpicture}
    \caption[Full-time marginalized  ADMG  of the process in \eqref{eq:ExampleVarmaDAGWithEquiv}.]{Full-time marginalized  ADMG $\mathcal{G}_{II,S}$ of the time series  defined in Equation \eqref{eq:ExampleVarmaDAGWithEquiv}.}
    \label{fig:ExampleVarmaADMGWithEquiv}
    \end{figure}
\end{example}
Using the notion of $m$ separation introduced in Section \ref{sec:PrelimADMG}, we can check separation statements in the full-time marginalized ADMG of a VARMA process. These $m$-separations will translate into conditional independencies in the stationary distribution of the process, as stated in the next theorem.
\begin{restatable}[Global Markov property for VARMA processes with instantaneous effects]{theorem}{thmGmpVarmaWith} \label{thm:GmpVarmaWith}
    Consider $p,q\in\mathbb{N}$, and $S$ a  VARMA($p,q$) process with instantaneous effects as per Definition \ref{def:VARMAWith}.  Consider finite pairwise disjoint sets $A,B,C$ of nodes of the marginalized full-time ADMG $\mathcal{G}_S$.  If $A$ and $C$ are $m$-separated by $B$ in $\mathcal{G}_S$, then  $A\indep C|B$.
\end{restatable}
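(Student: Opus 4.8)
The plan is to reduce the VARMA case to the already-established VAR-with-instantaneous-effects case of Theorem \ref{thm:GMPWith} by absorbing the innovations into the state vector, and then to bridge the marginalized ADMG $\mathcal{G}_S$ and the innovation-carrying DAG $\mathcal{G}_{full}$ by a purely graphical latent-projection argument.

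First I would show that the joint process $Z_t=(S_t,\varepsilon_t)\in\mathbb{R}^{2d}$ is itself a VAR$(\max(p,q))$ process with instantaneous effects in the sense of Definition \ref{def:VARWith}, whose full-time DAG is exactly $\mathcal{G}_{full}$. Writing $r=\max(p,q)$, $\tilde A_0=\begin{pmatrix}A_0 & I_d\\ 0 & 0\end{pmatrix}$, $\tilde A_k=\begin{pmatrix}A_k & B_k\\ 0 & 0\end{pmatrix}$ (with $A_k=0$ for $k>p$ and $B_k=0$ for $k>q$) and innovation $\tilde\varepsilon_t=(0,\varepsilon_t)^\top$, one has $Z_t=\tilde A_0 Z_t+\sum_{k=1}^r\tilde A_k Z_{t-k}+\tilde\varepsilon_t$. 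I would verify the three defining conditions: $\mathrm{diag}(\tilde A_0)=0$, since the only nonzero off-diagonal block $I_d$ links $\varepsilon_t^i$ to the distinct component $S_t^i$; the instantaneous graph is acyclic, since the $\varepsilon$-block receives no instantaneous edges and the $S$-block subgraph is acyclic by hypothesis; and the stationarity condition holds because block-triangularity yields $\det\bigl((I_{2d}-\tilde A_0)\lambda^r-\sum_{k=1}^r\tilde A_k\lambda^{r-k}\bigr)=\lambda^{rd}\lambda^{(r-p)d}\det\bigl((I_d-A_0)\lambda^p-\sum_{k=1}^pA_k\lambda^{p-k}\bigr)$, whose roots are $0$ together with the roots of the original VARMA characteristic polynomial, all of modulus strictly below $1$. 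The innovation $\tilde\varepsilon_t$ is i.i.d.\ with finite second moments and jointly independent (possibly degenerate) components, as permitted by Definition \ref{def:VARWith}. Comparing the edge rules of Definition \ref{def:VARWith} for $Z$ with those of Definition \ref{def:VARMAWith} shows this full-time DAG coincides with $\mathcal{G}_{full}$, so Theorem \ref{thm:GMPWith} applies verbatim: for finite pairwise disjoint node sets, $d$-separation in $\mathcal{G}_{full}$ implies the corresponding conditional independence.

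It therefore suffices to prove the graphical implication that $m$-separation of $A$ and $C$ given $B$ in $\mathcal{G}_S$ entails $d$-separation of $A$ and $C$ given $B$ in $\mathcal{G}_{full}$; equivalently, by contraposition, that a $B$-active path between $A$ and $C$ in $\mathcal{G}_{full}$ induces a $B$-active path in $\mathcal{G}_S$. This is exactly the direction of the latent-projection separation equivalence (Proposition \ref{prop:SeparationEquivLatentProjection}) whose input is a connecting path, and that path, together with the directed paths witnessing that each of its colliders has a descendant in $B$, is a finite object. I would fix such a configuration, choose a finite time window $[T_0,t_{\max}]$ containing $A\cup B\cup C$, the connecting path, and all those descendant-witnessing directed paths, and apply Proposition \ref{prop:SeparationEquivLatentProjection} to the finite sub-DAG of $\mathcal{G}_{full}$ on that window. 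This produces a connecting path in the window's latent projection onto its $S$-nodes. Since every edge created by latent projection spans at most $\max(p,q)$ units of time, this window projection and $\mathcal{G}_S$ (built in Definition \ref{def:FullTimeMarginalizedADMG} from a window of width $\max(p,q)+2$ and then repeated) agree on all edges among the window's $S$-nodes; hence the path is a path of $\mathcal{G}_S$, its collider and non-collider labels are unchanged, and each collider keeps a descendant in $B$ because descendant sets only grow in the larger graph, so the path remains $B$-active in $\mathcal{G}_S$.

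The main obstacle is this second step: localizing so that the finite statement of Proposition \ref{prop:SeparationEquivLatentProjection} can be invoked on the genuinely infinite graphs $\mathcal{G}_{full}$ and $\mathcal{G}_S$. The two delicate bookkeeping points are, first, ensuring the window is wide enough to contain not merely the connecting path but all descendant-witnesses that open its colliders, and second, confirming that the finite-window latent projection embeds into $\mathcal{G}_S$ as a subgraph, i.e.\ that the repetition-based construction of Definition \ref{def:FullTimeMarginalizedADMG} reproduces the true latent projection of $\mathcal{G}_{full}$ onto the $S$-nodes; both rest on the bounded temporal range of projection edges. Once these are established, chaining the graphical implication with Theorem \ref{thm:GMPWith} yields $A\indep C\mid B$.
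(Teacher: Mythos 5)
Your proposal is correct and follows essentially the same route as the paper: stack $Q_t=(S_t,\varepsilon_t)$ into a $2d$-dimensional VAR($\max(p,q)$) process with instantaneous effects, invoke Theorem \ref{thm:GMPWith}, and transfer the separation statement from $\mathcal{G}_S$ to $\mathcal{G}_{full}$ via Proposition \ref{prop:SeparationEquivLatentProjection}. Two details even work in your favor: your $\tilde A_0$ with the $I_d$ block is the correct stacking (the paper's $C_0$ has a zero there, an apparent typo, without which the displayed equation misses the contemporaneous $\varepsilon_t$ and the claimed identity of the stacked VAR's full-time DAG with $\mathcal{G}^{(I)}_{full}$ would fail), and your finite-window localization makes rigorous the application of the finite-DAG latent-projection result to the infinite graphs, a point the paper passes over without comment.
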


\subsection{A comment on \cite{hochsprungglobal}}\label{sec:Hochsprung}
Contemporaneous to the writing of this thesis, the  work by \cite{hochsprungglobal} was published. In this article, the authors prove the global Markov property for a certain family of time series with respect to a full-time DAG, under a series of assumptions. Their work encompasses a somewhat different family of time series than the ones considered in this work, as the structural equations of the processes they consider need not be linear nor have additive noise, see \cite[Eq. (1)]{hochsprungglobal}. Additionally, they allow for instantaneous effects. Of particular interest to us is the fact that one of their applications is the global Markov property for VAR processes with instantaneous effects. Their general result boils down to Theorem \ref{thm:GMPWith}. However, there is a subtlety. One of the conditions required in \cite{hochsprungglobal} is that the time series considered is $\alpha$-mixing. For strictly stationary VAR processes, a sufficient condition for $\alpha$-mixing is that the innovations have an absolutely continuous distribution w.r.t. the Lebesgue measure, as shown in \cite{mokkadem1988mixing}. Furthermore, there are stationary VAR processes such that the absolute continuity assumption does not hold and which are not $\alpha$-mixing (an example can be found in \cite[Sec. 2.3]{doukhan2012mixing}). Therefore, Theorem \ref{thm:GMPWith} is more general than the result presented in \cite[Sec. 4]{hochsprungglobal}, as it does not require the absolute continuity of the innovation distribution. 

Furthermore, \cite[Eq. (1)]{hochsprungglobal} restricts  the process to the setting where the only innovation which can enter the structural equation of $S_t^i$ is $\varepsilon_t^i$, with $\varepsilon_t$ being an i.i.d. process with jointly independent components. Therefore, the global Markov property for VARMA models with instantaneous effects (Theorem \ref{thm:GmpVarmaWith}) is not a consequence of  \cite[Thm. 1]{hochsprungglobal}, even if the absolute-continuity assumption of the innovation distribution were added. Thus, the families of time series for which \cite{hochsprungglobal} and this thesis study the global Markov property are notably different.
\section{Faithfulness for VARMA processes with instantaneous effects}\label{chap:faithfulness}
The global Markov property studied in Section \ref{sec:GMP} allows us to translate separation statements in a graph into conditional independencies. As usual in mathematics, we are interested as well on investigating whether the reciprocate statement holds, meaning whether conditional independencies imply separation. This would mean that the conditional independencies in the joint distribution of the process are uniquely the ones implied by the global Markov condition. This property is referred to as {faithfulness}:
\begin{definition}[Faithfulness of a distribution to a graph (adapted from \cite{pearl2000causality})]\label{def:FaithfulnessDAG}
    Consider $\mathcal{G}$ a graph over nodes $V$. We say that a distribution $\mathbb{P}$ over $V$ is faithful to  $\mathcal{G}$ if for every $A,B,C$ finite pairwise disjoint sets of nodes,  $A\indep C|B$ in $\mathbb{P}$ implies $A\perp_{\mathcal{G}}C|B$ (for a particular separation criterion).
\end{definition}
Violation of faithfulness in  linear systems is associated with coefficient cancellation, as we can see in Example \ref{ex:FaithfulnessViolation} for the case where the graph is a DAG and we consider $d$-separation statements.
\begin{example}[Faithfulness violation in a  linear system]\label{ex:FaithfulnessViolation}
    Consider the following structural equation model: 
    \begin{equation}\label{eq:FaithfulnessViolationExampleSCM}
        \begin{cases}
        X = \varepsilon_X\\ Y=\alpha X+\varepsilon_Y\\Z=\gamma X+\beta Y +\varepsilon_Z
    \end{cases},\text{  with   }\varepsilon_X,\varepsilon_Y,\varepsilon_Z\text{ i.i.d.   and   }\alpha,\beta,\gamma\in\mathbb{R}.
    \end{equation}
    Its associated DAG can be found in Figure \ref{fig:FaithfulnessViolationExampleSCM}. Note that as long as $\gamma\neq 0$ or $\alpha\beta\neq 0$ then $X\not\perp Z$. However,  one can write $Z=(\gamma+\alpha\beta)X+\beta\varepsilon_Y+\varepsilon_Z$, and therefore  if $\gamma+\alpha\beta=0$ then $Z\indep X$ even though $Z\not\perp X$, what would constitute a faithfulness violation. 
    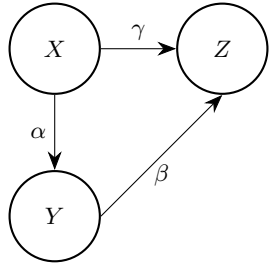
\begin{figure}[ht!]
        \centering
        \begin{tikzpicture}[
        main/.style={draw, circle, thick, minimum size=12mm}
        ]
        \node[main] (x) {$X$};
        \node[main] (y) [below = of x] {$Y$};
        \node[main] (z) [right =  of x]{$Z$};
        
        \draw[-{Stealth[length=2.5mm]}] (x.east) -- node[pos=0.5, above] {$\gamma$} (z.west);
        \draw[-{Stealth[length=2.5mm]}] (x.south) -- node[pos=0.5, left] {$\alpha$} (y.north);
        \draw[-{Stealth[length=2.5mm]}] (y.east) -- node[pos=0.5, below] {$\beta$} (z.south);

    \end{tikzpicture}
    \caption[Associated DAG  of the process  defined in  \eqref{eq:FaithfulnessViolationExampleSCM}.]{Associated  DAG of the process  defined in Equation \eqref{eq:FaithfulnessViolationExampleSCM}. The coefficients of the edges correspond to the coefficients of the process.}
    \label{fig:FaithfulnessViolationExampleSCM}
    \end{figure}
\end{example}

Notice how the violation of faithfulness mentioned in Example \ref{ex:FaithfulnessViolation} comes from imposing a restriction on the coefficients of the process. If, in said example, $(\alpha,\beta,\gamma)$ were sampled on $\mathbb{R}^3$ from a distribution  $\mathbb{P}$ which were absolutely continuous w.r.t. the Lebesgue measure, then the violation of faithfulness studied before would happen with null probability. This was the main idea  motivating the results on faithfulness which can be found in \cite[Thm. 3.2]{spirtes2001causation}, which also motivates the results of this  sections.

\subsection{Faithfulness for VAR processes with instantaneous effects}
Before considering VARMA processes in full generality, we will restrict ourselves to VAR processes with instantaneous effects. Treating this case first, which is not trivial, will considerably simplify the study of the VARMA case. We then consider  first VAR processes with instantaneous effects. 
\begin{restatable}[Faithfulness for VAR processes with instantaneous effects]{theorem}{thmFaithfulnessVARWith}\label{thm:FaithfulnessVARWith}
    Consider $S$ a  VAR($p$) process with instantaneous effects as per Definition \ref{def:VARWith}, such that  $\varepsilon_t\overset{\text{i.i.d.}}{\sim}\mathcal{N}(0,\Gamma)$ with $\Gamma$ a diagonal matrix. Denote the full-time DAG of this process as $\mathcal{G}_{full}$. Assume that the components of the process are ordered according to the topological order of the instantaneous effects, so that $A_0$ is triangular inferior with a zero diagonal. Define: 
    \begin{equation*}
        \begin{split}
            \mathcal{D}\defeq\{&(\text{vech}(A_0),\text{vec}(A_1|\ldots|A_p))\footnotemark
        \in\mathbb{R}^{pd^2+d(d-1)/2}\colon\\& \det((I_d-A_0)\lambda^p-A_1\lambda^{p-1}-\ldots-A_p)=0\Rightarrow |\lambda|<1\}.
        \end{split}
    \end{equation*}
    Assume that the coefficients of the process $(\text{vech}(A_0),\text{vec}(A_1|\ldots|A_p),\sigma_1^2,\ldots,\sigma_d^2)$ (with $\sigma_i^2=\mathbb{V}\text{ar}(\varepsilon^i)$) are sampled from a distribution $\mathbb{P}$ on $\overset{\circ}{\mathcal{D}}\times (0,\infty)^d$ which is absolutely continuous w.r.t. the Lebesgue measure. Then the distribution of the process is almost-surely faithful  to the full-time DAG $\mathcal{G}_{full}$. More precisely, the set of coefficients and variances which lead to a distribution of the process such that there exist $A,B,C$ finite pairwise disjoint sets of nodes such that $A\indep C|B$ but $A\not\perp_{\mathcal{G}_{full}}C|B$ has probability zero w.r.t. $\mathbb{P}$.
    \footnotetext{If $A$ is an $n\times m$ real-valued matrix, we define vec($A$) as the $\mathbb{R}^{nm}$ vector obtained from stacking the columns of $A$ one after another. If $B$ is a $n\times n$ real-valued matrix, we define vech($B$) as the $\mathbb{R}^{n(n-1)/2}$ vector obtained from stacking the elements of $B$ under its diagonal column-wise. This notation is taken from \cite[Sec. 10.2.2]{francq2019garch}. }
\end{restatable}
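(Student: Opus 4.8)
The plan is to run the classical measure-zero argument for faithfulness of linear Gaussian systems, in the spirit of \cite[Thm.~3.2]{spirtes2001causation}, adapted to the present setting. First I would note that, the innovations being Gaussian and the map from $(\varepsilon_s)_{s\le t}$ to $S_t$ being linear, every finite-dimensional marginal of the stationary distribution is multivariate Gaussian; moreover, since $\Gamma$ is diagonal, the full-time graph $\mathcal{G}_{full}=\mathcal{G}_I$ is a genuine DAG with mutually independent structural noises $\varepsilon_t^i$, so $d$-separation is the pertinent criterion and Theorem~\ref{thm:GMPWith} applies. Fixing finite pairwise disjoint $A,B,C$, a Gaussian conditional independence $A\indep C\mid B$ holds if and only if every partial covariance $\sigma_{a,c\cdot B}=\Sigma_{ac}-\Sigma_{aB}\Sigma_{BB}^{-1}\Sigma_{Bc}$ with $a\in A,\ c\in C$ vanishes; as $\det\Sigma_{BB}>0$, each such vanishing is equivalent to the vanishing of a polynomial in the entries of the joint covariance matrix of $A\cup B\cup C$.

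Second, I would show that these covariance entries, namely the stationary autocovariances $\Gamma_S(h)$, are real-analytic functions of $\theta=(\mathrm{vech}(A_0),\mathrm{vec}(A_1|\cdots|A_p),\sigma_1^2,\ldots,\sigma_d^2)$ on the open set $\overset{\circ}{\mathcal{D}}\times(0,\infty)^d$. This follows from the MA($\infty$) representation underlying Equation~\eqref{eq:VAREquiv}: the MA coefficients $\Psi_j$ are entries of the resolvent $((I_d-A_0)-A_1z-\cdots-A_pz^p)^{-1}$, rational in $\theta$ with denominator nonvanishing on the stability region, and $\Gamma_S(h)=\sum_{j\ge0}\Psi_j\Sigma_\delta\Psi_{j+h}^\top$ (with $\Sigma_\delta=(I_d-A_0)^{-1}\Gamma(I_d-A_0)^{-\top}$) converges locally uniformly at a geometric rate, hence is real-analytic; equivalently one solves the associated Lyapunov equation in companion form, whose solution is rational in $\theta$ on the stability region. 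Composing, for every triple $(A,B,C)$ the statement $A\indep C\mid B$ is the simultaneous vanishing of finitely many real-analytic functions $g^{a,c}(\theta)$.

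The decisive step is non-triviality. If $A\not\perp_{\mathcal{G}_{full}}C\mid B$, then some $a\in A$ and $c\in C$ are $d$-connected given $B$, and a faithfulness violation for $(A,B,C)$ forces in particular $\sigma_{a,c\cdot B}=0$; it therefore suffices to prove that this single real-analytic function $g^{a,c}$ is not identically zero on $\overset{\circ}{\mathcal{D}}\times(0,\infty)^d$. I would fix a $d$-connecting path $\pi$ from $a$ to $c$ given $B$ and exhibit one admissible $\theta$ with $\sigma_{a,c\cdot B}\ne0$, working from the path expansion $\mathrm{Cov}(S_t^i,S_{t'}^{i'})=\sum_{s,k}\phi_{t,i}^{s,k}\phi_{t',i'}^{s,k}\sigma_k^2$, where $\phi_{t,i}^{s,k}$ sums the products of edge coefficients along the directed paths in $\mathcal{G}_I$ from $\varepsilon_s^k$ to $S_t^i$. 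The genuine difficulty, absent from the textbook DAG argument, is that stationarity forces the same coefficients to be \emph{shared across time}, so distinct directed paths may contribute the same monomial in $\theta$ and could conceivably cancel. I expect this to be the main obstacle. To control it I would grade $\sigma_{a,c\cdot B}$ by the overall scaling $A_k\mapsto tA_k$ (and the off-diagonal entries of $A_0$ by the same factor $t$): near $t=0$ the process degenerates to white noise, so $\Sigma_{BB}^{-1}$ is analytic and diagonal there and $\sigma_{a,c\cdot B}$ admits a power series in $t$ whose lowest-order coefficient is governed by the shortest configuration activating $\pi$ (including the short directed paths that open any colliders of $\pi$ through nodes of $B$); choosing the remaining coefficients generically inside the stability region then makes this leading coefficient nonzero, so $g^{a,c}\not\equiv0$.

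Finally I would invoke the standard real-analytic lemma: a real-analytic function on a connected open set which is not identically zero has Lebesgue-null zero set. Hence for each $(A,B,C)$ with $A\not\perp_{\mathcal{G}_{full}}C\mid B$, the faithfulness-violating parameter set is contained in $\{g^{a,c}=0\}$, which is Lebesgue-null; taking the countable union over all finite pairwise disjoint triples $(A,B,C)$ of nodes of the (countable) full-time DAG preserves Lebesgue-nullity. Since $\mathbb{P}$ is absolutely continuous with respect to Lebesgue measure on $\overset{\circ}{\mathcal{D}}\times(0,\infty)^d$, this union has $\mathbb{P}$-probability zero, which is precisely the asserted almost-sure faithfulness. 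Should $\overset{\circ}{\mathcal{D}}$ fail to be connected, I would simply run the non-triviality argument on each connected component.
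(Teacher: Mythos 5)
Your proposal reproduces the architecture of the paper's proof almost step for step: reduce a faithfulness violation to a single $d$-connected pair $a,c$ and finite conditioning set $B$; use joint Gaussianity of the stationary law to translate $A\indep C\mid B$ into the vanishing of partial covariances; show the relevant covariance is a real-analytic function of $(\text{vech}(A_0),\text{vec}(A_1|\ldots|A_p),\sigma_1^2,\ldots,\sigma_d^2)$ on $\overset{\circ}{\mathcal{D}}\times(0,\infty)^d$; argue this function is not identically zero; and finish with the null-zero-set property of nontrivial real-analytic functions \cite{mityagin2015zero}, a countable union over triples, and absolute continuity of $\mathbb{P}$. Your analyticity argument (rationality of the MA($\infty$) coefficients and of the Lyapunov solution on the stability region) is sound, and arguably cleaner than the paper's route through absolute convergence and \cite[Thm 2.2.5]{krantz2002primer}.

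The genuine gap is the non-triviality step, which you yourself call the main obstacle. The paper discharges it by invoking \cite[Thm. 3.5]{spirtes2001causation}: after writing the conditional covariance explicitly through the conditional MA($\infty$) representations \eqref{eq:ConditionalMA}--\eqref{eq:StationaryConditionalDistribution} (path coefficients of directed paths avoiding $B$ in the representation \eqref{eq:VAREquiv}), it uses that result to conclude that $d$-connection forces $\mathbb{C}\text{ov}(X,Y|B)$ to be non-trivially zero as a function of the process coefficients. You instead try to prove this from scratch by the grading $A_k\mapsto tA_k$, and the argument is not closed: the assertion that ``choosing the remaining coefficients generically inside the stability region then makes this leading coefficient nonzero'' is circular, since genericity only yields non-vanishing once you already know the leading $t$-coefficient is a not-identically-zero polynomial in the direction variables --- and that is precisely the cancellation question you raised. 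At minimal order there may be several activating configurations (the shortest structure along $\pi$ together with the collider-opening directed paths into $B$, plus time-translates forced by parameter sharing, plus other $d$-connecting structures of the same edge count), and the Neumann expansion of $\Sigma_{BB}^{-1}$ contributes terms of alternating sign, so these minimal contributions need not survive summation. You have relocated the difficulty into the leading coefficient rather than resolved it. To close the gap you should either adopt the paper's citation of \cite[Thm. 3.5]{spirtes2001causation}, or imitate the finite-graph proof of that theorem by taking the direction $\theta_0$ supported only on the edge types occurring in one $d$-connecting structure, so that the sub-model is sparse enough for the leading-order sign bookkeeping to be carried out explicitly. A smaller inconsistency: your fallback of ``running the argument on each connected component'' of $\overset{\circ}{\mathcal{D}}$ does not fit your construction, which produces witness parameters only near the origin (small $t$), hence only in the component containing $0$; on any other hypothetical component you would have no witness, whereas the paper avoids the issue by asserting connectedness outright.
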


\subsection{Faithfulness for VARMA processes with instantaneous effects}
To tackle the case of VARMA processes, we follow the same basic idea of the  proof of Theorem \ref{thm:GmpVarmaWith}  presented in  Appendix  \ref{sec:AppendixProofs}: Rewriting the VARMA process as a VAR process of double dimension, and then applying some of the ideas used in the proof of Theorem \ref{thm:FaithfulnessVARWith} (see Appendix \ref{sec:AppendixProofs}). We can the introduce and prove the following result, which culminates this section.
\begin{restatable}[Faithfulness for VARMA processes with instantaneous effects]{theorem}{thmFaithfulnessVARMAWith} \label{thm:FaithfulnessVARMAwith}
    Consider $S$ a  VARMA($p,q$) process with instantaneous effects as per Definition \ref{def:VARMAWith}, such that $A_0$ is triangular inferior with a diagonal of zeroes and $\varepsilon_t\overset{\text{i.i.d.}}{\sim}\mathcal{N}(0,\Gamma)$ with $\Gamma$ a diagonal matrix. Denote the full-time marginalized ADMG of this process as $\mathcal{G}_S$. Define: 
    \begin{equation*}
        \begin{split}\mathcal{D}\defeq\{&
            (\text{vech}(A_0),\text{vec}(A_1|\ldots|A_p|B_1|\ldots|B_q))
        \in\mathbb{R}^{(p+q)d^2+d(d-1)/2}\colon\\ &\det((I_d-A_0)\lambda^p-A_1\lambda^{p-1}-\ldots-A_p)=0\Rightarrow |\lambda|<1\}.
        \end{split}
    \end{equation*}
    Assume that the coefficients of the process $(\text{vech}(A_0),\text{vec}(A_1|\ldots|A_p|B_1|\ldots|B_q),$ $\sigma_1^2,\ldots,\sigma_d^2)$ (with $\sigma_i^2=\mathbb{V}\text{ar}(\varepsilon^i)$) are sampled from a distribution $\mathbb{P}$ on $\overset{\circ}{\mathcal{D}}\times (0,\infty)^d$ which is absolutely continuous w.r.t. the Lebesgue measure. Then the distribution of the process is almost-surely faithful  to the full-time marginalized ADMG. More precisely, the set of coefficients and variances which lead to a distribution of the process such that there exist $A,B,C$ finite pairwise disjoint sets of nodes such that $A\indep C|B$ but $A\not\perp_{\mathcal{G}_{S}}C|B$ has probability zero w.r.t. $\mathbb{P}$.
\end{restatable}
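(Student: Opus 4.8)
The plan is to reduce to the VAR case of Theorem \ref{thm:FaithfulnessVARWith} by a state augmentation and then run, on the VARMA parameter space itself, the analytic non-vanishing argument underlying that theorem. Writing $r \defeq \max(p,q)$ and stacking $\tilde S_t \defeq (S_t^\top, \varepsilon_t^\top)^\top \in \mathbb{R}^{2d}$, the recursion of Definition \ref{def:VARMAWith} becomes a VAR($r$) process with instantaneous effects in dimension $2d$: the $\varepsilon$-block is pure i.i.d. innovation, while the $S$-block reads $S_t = A_0 S_t + \sum_{k=1}^p A_k S_{t-k} + \varepsilon_t + \sum_{l=1}^q B_l \varepsilon_{t-l}$, so the instantaneous matrix is $\tilde A_0 = \begin{pmatrix} A_0 & I_d \\ 0 & 0 \end{pmatrix}$, the lag matrices carry $A_k$ and $B_l$ in the obvious block positions, and the innovation is $\tilde\varepsilon_t = (0^\top,\varepsilon_t^\top)^\top$. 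The full-time DAG of this augmented VAR is exactly the $\mathcal{G}_{full}$ of the VARMA (over the $S$ and $\varepsilon$ nodes), so by Definition \ref{def:FullTimeMarginalizedADMG} and Proposition \ref{prop:SeparationEquivLatentProjection}, for finite $A,B,C$ the $m$-separation $A\perp_{\mathcal{G}_S} C \mid B$ among $S$-nodes is equivalent to $d$-separation among the same nodes in the augmented DAG, with the $\varepsilon$-nodes playing the role of latents. Since $\varepsilon \sim \mathcal{N}(0,\Gamma)$ with $\Gamma \succ 0$ and the AR part is stable on $\overset{\circ}{\mathcal{D}}$, the $S$-marginal is a stationary Gaussian process whose spectral density is positive definite almost everywhere; hence every finite joint covariance is positive definite and each conditional independence $A\indep C\mid B$ among $S$-nodes is equivalent to the vanishing of the block $\Sigma_{AC} - \Sigma_{AB}\Sigma_{BB}^{-1}\Sigma_{BC}$.

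Next I would carry out the analytic-geometry step. Fix a finite triple $A,B,C$ with $A\not\perp_{\mathcal{G}_S} C\mid B$ and view the entries of $\Sigma_{AC}-\Sigma_{AB}\Sigma_{BB}^{-1}\Sigma_{BC}$ as functions of $\theta = (\text{vech}(A_0),\text{vec}(A_1|\ldots|B_q),\sigma_1^2,\ldots,\sigma_d^2)$ on $\overset{\circ}{\mathcal{D}}\times(0,\infty)^d$. The MA($\infty$) coefficients of the VARMA process are real-analytic in $\theta$ on this domain (stability keeps the AR polynomial root-free on the closed unit disk), hence so are all autocovariances $\gamma(h)_{ij}$, and since $\det\Sigma_{BB}>0$ on the whole domain the conditional-covariance entries are themselves real-analytic in $\theta$. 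Thus $A\indep C\mid B$ is the simultaneous vanishing of finitely many real-analytic functions. If at least one of these functions is not identically zero on each connected component of the domain, its zero set is Lebesgue-null; the exceptional set for this triple is then Lebesgue-null, the countable union over the (countably many) finite triples of $S$-nodes remains Lebesgue-null, and absolute continuity of $\mathbb{P}$ yields almost-sure faithfulness.

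The non-triviality is where the ideas from the proof of Theorem \ref{thm:FaithfulnessVARWith} re-enter, and it suffices to exhibit a single witness $\theta$ (in each connected component) at which the CI fails. A non-separation in $\mathcal{G}_S$ lifts to a $d$-connecting path given $B$ in the augmented DAG; expanding the relevant conditional covariance by the trek rule and setting the structurally nonzero coefficients $(A_k)_{ij},(B_l)_{ij}$ to small, generic, distinct values makes the shortest connecting trek the leading term of a power-series expansion, so that it cannot be cancelled and the function is certifiably not identically zero. Treks routed through an $\varepsilon$-node (those producing the bi-directed edges of $\mathcal{G}_S$ via Definition \ref{def:LatentProjection}) must be included in this bookkeeping, their contribution carrying an extra factor of the corresponding $\sigma_i^2$; this reproduces, intrinsically on the VARMA parameter domain, the non-vanishing certification used for Theorem \ref{thm:FaithfulnessVARWith}.

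The main obstacle I anticipate is precisely that Theorem \ref{thm:FaithfulnessVARWith} cannot be invoked as a black box for the augmented VAR, for two reasons. First, the augmented innovation covariance $\tilde\Gamma = \mathrm{diag}(0,\ldots,0,\sigma_1^2,\ldots,\sigma_d^2)$ is singular — the $S$-block carries no own noise, only the fed-in $\varepsilon_t$ — so the augmented joint law is degenerate and the hypothesis of a positive diagonal variance vector fails. Second, the augmented coefficient matrices are forced into the block pattern above, i.e. the augmented parameter ranges only over a proper submanifold, so genericity in the $2d$-dimensional VAR parameters does not transfer to genericity in $\theta$. The resolution is to never leave the VARMA parameter space: work directly with the generically non-degenerate $S$-marginal (whose covariance is positive definite because $\Gamma\succ0$), and re-establish the trek-rule non-vanishing intrinsically in $\theta$ as above, taking care of the $\varepsilon$-routed connecting paths. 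A secondary technical point is the possible disconnectedness of $\overset{\circ}{\mathcal{D}}$: the analytic-zero-set argument must be applied componentwise, so the witness of the previous paragraph must be shown to exist within every connected component (the MA block $B_1,\ldots,B_q$ ranging freely over all of $\mathbb{R}^{qd^2}$ and the leading-term construction should make this straightforward).
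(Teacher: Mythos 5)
Your scaffolding coincides with the paper's own proof: the same stacking $Q_t=(S_t^\top,\varepsilon_t^\top)^\top$ into a $2d$-dimensional VAR with instantaneous effects (your version, with the identity block in $\tilde A_0$ and innovation $(0^\top,\varepsilon_t^\top)^\top$, is in fact the internally consistent way of writing the paper's augmentation), the same use of Proposition~\ref{prop:SeparationEquivLatentProjection} to turn $m$-separation in $\mathcal{G}_S$ into $d$-separation among $S$-nodes in $\mathcal{G}_{full}$, the same reduction of conditional independence to the vanishing of a Gaussian conditional covariance that is real-analytic in $\theta$ on $\overset{\circ}{\mathcal{D}}\times(0,\infty)^d$, and the same zero-set-of-an-analytic-function plus countable-union ending. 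You also correctly identify the two obstructions to quoting the VAR faithfulness theorem on the augmented process (singular $\tilde\Gamma$; parameters confined to a block submanifold with tied and unit entries). The paper resolves this differently from you: it does not reprove anything intrinsically, but argues that \cite[Thm. 3.5]{spirtes2001causation} still applies to the augmented process because its stationary law is Gaussian and its error terms are jointly independent (degeneracy of the first $d$ of them being harmless), and then transports the non-trivial vanishing back to $\mathbb{P}^S$ via $\mathbb{P}^Q_S=\mathbb{P}^S$.

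The step you substitute for that citation contains a genuine gap. You claim that, given a $d$-connecting path given $B$, taking small generic distinct coefficients makes ``the shortest connecting trek'' the uncancellable leading term of $\mathbb{C}\text{ov}(X,Y|B)$. Two problems. First, in $\Sigma_{XY}-\Sigma_{XB}\Sigma_{BB}^{-1}\Sigma_{BY}$ the monomial of a trek passing through some $b\in B$ as a non-collider is cancelled by the corresponding term of $\Sigma_{Xb}\Sigma_{bb}^{-1}\Sigma_{bY}$ (split the trek at $b$: the product of the monomials of the two halves times $1/\mathbb{V}\text{ar}(b)$ reproduces the same monomial to leading order); conditioning exists precisely to remove such treks, so the shortest $X$--$Y$ trek need not survive at all. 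Second, and worse for the formulation itself, $d$-connection given $B$ can be mediated purely by colliders with descendants in $B$ --- e.g. $X_{t-1}\to Z_t\leftarrow Y_{t-1}$ with $Z_t\in AN_{\mathcal{G}_{full}}(B)$, realizable already by the stable VAR(1) $Z_t=X_{t-1}+Y_{t-1}+\varepsilon^Z_t$ with $X,Y$ autonomous --- and then there is no trek between $X$ and $Y$ whatsoever: $\Sigma_{XY}$ vanishes identically and the entire dependence sits in the subtracted term, so ``shortest connecting trek'' does not even parse. Handling general connecting paths (arbitrary mixtures of collider and non-collider segments, with coefficients tied across time by stationarity, unit edges $\varepsilon_t\to S_t$, and zero own-noise on the $S$-nodes) is exactly the content of the d-connection-implies-generically-nonvanishing-partial-correlation theorem you declined to cite; compressing it into ``the leading term cannot be cancelled \ldots should make this straightforward'' leaves the crux of the theorem unproven. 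Either carry out that combinatorial argument in full (trek-system/determinantal expansions in the spirit of trek separation), or follow the paper and justify the applicability of \cite[Thm. 3.5]{spirtes2001causation} to the augmented process. Incidentally, your worry about connected components is unnecessary: $\mathcal{D}$ is open and path-connected, since $(A_0,A_1,\ldots,A_p)\mapsto(A_0,sA_1,s^2A_2,\ldots,s^pA_p)$ scales every root by $s$, so one can contract the lag matrices to zero and then contract $A_0$, while the MA blocks are unconstrained; hence a single witness per triple $(A,B,C)$ suffices.
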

\section{Instrumental variable regression for VARMA processes with instantaneous effects}\label{chap:IV}
The aim of this Section is to  define the concept of total causal effect in the context of a VARMA process with instantaneous effects, find conditions under which it can be identified, and propose a consistent estimator of it. To do so, we need to start by defining the target quantity: Total causal effects.
\begin{definition}[Causal paths, path coefficients and total causal effect for a VARMA process with instantaneous effects (adapted from \texorpdfstring{\cite[Def. 15-16]{thams2022identifying}}))]\label{def:PathCoef} \label{def:TotalCausalEffect}
    Let $S$  be a VARMA($p,q$) process with instantaneous effects as per Definition \ref{def:VARMAWith}. Let $\mathcal{G}_{full}$ be its full-time DAG. Let $Y\defeq S_t^{i_0}$ be a node in $\mathcal{G}_{full}$, and let $\mathcal{X}=({S_{t-l_1}^{i_1}},\ldots,{S_{t-l_m}^{i_m}})^{\top}$  be a finite collection of endogenous nodes in this DAG not containing $Y$. 
    \begin{enumerate}
        \item A directed path from $\mathcal{X}$ to $Y$ in $\mathcal{G}_{full}$ is called $\mathcal{X}$-causal (or simply causal) if it only contains nodes in $\{S_t^i\colon t\in\mathbb{Z}, i\in\{1,\ldots,d\}\}$ (endogenous nodes), and does not intersect $\mathcal{X}$ after the first node. For a $\mathcal{X}$-causal path $\pi$: $\mathcal{X}{\to}\cdots{\to} Y$ we define its path coefficient to be the product of the linear coefficients corresponding to the edges found in said path.
        \item  We define the total causal effect of $\mathcal{X}$ on $Y$ as a vector $\beta\in\mathbb{R}^m$, such that for all $j\in\{1,\ldots,m\}$, $\beta_j$ is the sum of the path coefficients over all $\mathcal{X}$-causal paths from $S_{t-l_j}^{i_j}$ to $Y$.
    \end{enumerate}
\end{definition}
The main goal of this section will be to show identifiability of the total causal effect. However, before doing so, we must introduce some notation regarding conditional covariances for random vectors. 
\begin{definition}[Population and empirical conditional covariance between random vectors \cite{thams2022identifying}]
    Let $A,\ B,\ C$ be three random vectors. We define the covariance matrix between $A$ and $C$ conditional on $B$ as
    $$\mathbb{C}\text{ov}(A,C|B)\defeq\mathbb{C}\text{ov}(A-\mathbb{E}[A|B],C-\mathbb{E}[C|B])=\mathbb{E}[AC^{\top}|B]-\mathbb{E}[A|B]\mathbb{E}[C^{\top}|B],$$
     assuming all these quantities exist. Furthermore, the analogous empirical covariance $\widehat{\mathbb{C}\text{ov}}(A,C|B)$ is defined as the empirical covariance between $A$ and $C$ after regressing out $B$, that is
    $$\widehat{\mathbb{C}\text{ov}}(A,C|B)\defeq\widehat{\mathbb{C}\text{ov}}(A-\widehat{\mathbb{E}}[A|B],C-\widehat{\mathbb{E}}[C|B]).$$
\end{definition}
As mentioned in \cite{thams2022identifying}, this definition of conditional covariance allows to cope even with non-linear relationships. Taking it into account, the following theorem (which extends  Theorem 5  from \cite{thams2022identifying}) allows to identify the total causal effect in the context of a VARMA process with instantaneous effects, and to consistently estimate it. Its proof, presented in Appendix \ref{sec:AppendixProofs} follows the ideas of the proof of \cite[Thm. 5]{thams2022identifying}, but now accounting for the more complex structure of VARMA processes with instantaneous effects. 
\begin{restatable}[{IV} regression for VARMA processes with instantaneous effects]{theorem}{thmIVregressionVARMA}\label{thm:IVregressionVARMA}
    Consider a time series $S$ following a VARMA($p,q$) process with instantaneous effects as per Definition \ref{def:VARMAWith}, with full-time marginalized ADMG $\mathcal{G}_S$. Let $Y$ be a node in $\mathcal{G}_S$ and $\mathcal{X,I,B}$ be pairwise disjoint, finite sets of nodes of $\mathcal{G}_S$ not containing $Y$. Assume the following conditions hold:
    \begin{itemize}
        \item[(1)] $\mathcal{I}$ and $Y$ are $m$-separated by $\mathcal{B}$ in $\mathcal{G}_S$ after removing all outgoing edges from $\mathcal{X}$ which are on a $\mathcal{X}$-causal path from $\mathcal{X}$ to $Y$ in $\mathcal{G}_S$ (see Definition \ref{def:PathCoef}),
        \item[(2)] $AN_{\mathcal{G}_S}(\mathcal{B})\cap SP_{\mathcal{G}_S}(DE_{\mathcal{G}_S}(\mathcal{X}\cup \{Y\}))=\emptyset$.
     \end{itemize}
     Then it holds that:
     \begin{itemize}
         \item[(i)] The total causal effect $\beta$ of $\mathcal{X}$ on $Y$  satisfies the moment equation
         \begin{equation}\label{eq:IVmomentEquation}
             \mathbb{E}[\mathbb{C}\text{ov}(Y-\beta\mathcal{X},\mathcal{I}|\mathcal{B})]=0.
         \end{equation}
     \end{itemize}
    Furthermore, if the next condition holds as well
     \begin{itemize}
         \item[(3)] $\mathbb{E}[\mathbb{C}\text{ov}(\mathcal{X},\mathcal{I}|\mathcal{B})]$ has rank $\dim(\mathcal{X})$, that is full row rank,
     \end{itemize}
     then  we can identify the total causal effect, that is: 
     \begin{itemize}
         \item[(ii)] $\beta$ is the only solution to Equation \eqref{eq:IVmomentEquation}.
     \end{itemize}
     Additionally,  to consistently estimate $\beta$ we have the following result:
     \begin{itemize}
         \item[(iii)] If \textbf{X, I, B, Y} are observations of $\mathcal{X, I, B}, Y$ and $W$ is a positive definite matrix,  then $$\hat{\beta}\defeq\arg\min_{b\in\mathbb{R}^{\dim(\mathcal{X})}}\lVert \widehat{\mathbb{C}\text{ov}}(Y-b\mathcal{X},\mathcal{I}|\mathcal{B}) \lVert^2_W$$ is a consistent estimator of $\beta$. 
     \end{itemize}
\end{restatable}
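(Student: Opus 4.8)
The plan is to mirror the structure of the proof of \cite[Thm. 5]{thams2022identifying}, adapting each step to the richer setting (instantaneous effects, the moving-average part, and the bi-directed edges produced by marginalization), and to prove (i), (ii), (iii) in turn, with (i) being the crux. First I would establish the algebraic identity $Y-\beta\mathcal{X}=\tilde Y$, where $\tilde Y$ collects the contribution to $Y$ carried by directed paths of $\mathcal{G}_{full}$ that avoid $\mathcal{X}$. Since a VARMA process admits a linear (MA-type) representation, every endogenous node is a linear combination $\sum_{\pi}p(\pi)\varepsilon_{s(\pi)}$ over directed paths $\pi$ from innovation sources, with $p(\pi)$ the product of the edge coefficients. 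Partitioning the paths into $Y$ according to whether they meet $\mathcal{X}$, and factoring each path that does meet $\mathcal{X}$ at its \emph{last} $\mathcal{X}$-node $\mathcal{X}_j$ (whose outgoing edge on the path is then necessarily a causal out-edge, since the remaining subpath stays endogenous and never re-enters $\mathcal{X}$), the contribution of all paths through $\mathcal{X}_j$ collapses to $\beta_j\mathcal{X}_j$ by Definition \ref{def:TotalCausalEffect}. Summing over $j$ yields $Y=\tilde Y+\beta\mathcal{X}$, so $Y-\beta\mathcal{X}=\tilde Y$, and $\tilde Y$ is exactly the value node $Y$ takes in the modified process obtained by deleting the $\mathcal{X}$-causal out-edges.

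The modified process is again a VARMA process with instantaneous effects (only some coefficients are set to zero, which preserves the stationarity determinant condition), so Theorem \ref{thm:GmpVarmaWith} applies to its full-time marginalized ADMG $\tilde{\mathcal{G}}_S$, namely $\mathcal{G}_S$ with the $\mathcal{X}$-causal out-edges removed. Condition (1) is precisely $\mathcal{I}\perp_{\tilde{\mathcal{G}}_S}Y\mid\mathcal{B}$, hence $\mathcal{I}\indep \tilde Y\mid\mathcal{B}$, i.e. $\mathcal{I}\indep (Y-\beta\mathcal{X})\mid\mathcal{B}$. The delicate point is that this independence must be read off the \emph{original}-process values of $\mathcal{I}$ and $\mathcal{B}$; here condition (2), $AN_{\mathcal{G}_S}(\mathcal{B})\cap SP_{\mathcal{G}_S}(DE_{\mathcal{G}_S}(\mathcal{X}\cup\{Y\}))=\emptyset$, guarantees that conditioning on $\mathcal{B}$ opens no bi-directed (confounding) route between the innovations feeding $\tilde Y$ and those feeding $\mathcal{I}$, so that deleting the causal out-edges does not affect the separation and the statement transfers to the original distribution. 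Conditional independence then gives $\mathbb{C}\text{ov}(Y-\beta\mathcal{X},\mathcal{I}\mid\mathcal{B})=0$ almost surely, and taking expectations yields Equation \eqref{eq:IVmomentEquation}.

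For (ii), for arbitrary $b$ I would write $Y-b\mathcal{X}=(Y-\beta\mathcal{X})+(\beta-b)\mathcal{X}$ and use bilinearity of the conditional covariance together with (i) to obtain $\mathbb{E}[\mathbb{C}\text{ov}(Y-b\mathcal{X},\mathcal{I}\mid\mathcal{B})]=(\beta-b)\,\mathbb{E}[\mathbb{C}\text{ov}(\mathcal{X},\mathcal{I}\mid\mathcal{B})]$; since condition (3) makes $\mathbb{E}[\mathbb{C}\text{ov}(\mathcal{X},\mathcal{I}\mid\mathcal{B})]$ of full row rank $\dim(\mathcal{X})$, the map $b\mapsto(\beta-b)\,\mathbb{E}[\mathbb{C}\text{ov}(\mathcal{X},\mathcal{I}\mid\mathcal{B})]$ is injective, so $\beta$ is the unique solution of \eqref{eq:IVmomentEquation}. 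For (iii), I would invoke stationarity and ergodicity of the VARMA process to show that the empirical (regress-out-$\mathcal{B}$) conditional covariances $\widehat{\mathbb{C}\text{ov}}(\cdot,\mathcal{I}\mid\mathcal{B})$ converge in probability to their population counterparts, uniformly in $b$ because the dependence on $b$ is affine; the population objective $b\mapsto\lVert\mathbb{E}[\mathbb{C}\text{ov}(Y-b\mathcal{X},\mathcal{I}\mid\mathcal{B})]\rVert_W^2$ is a convex quadratic with unique minimizer $\beta$ by (ii) and (3), and a standard argmin/M-estimation continuity argument transfers this to the empirical minimizer, giving $\hat\beta\overset{p}{\to}\beta$.

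The hardest part is the second step of (i): rigorously transferring the graphical $m$-separation from the edge-deleted ADMG $\tilde{\mathcal{G}}_S$ to a vanishing conditional covariance between the linear combination $Y-\beta\mathcal{X}$ and the original-process instruments $\mathcal{I}$. This requires checking that deleting the $\mathcal{X}$-causal out-edges alters neither the stationarity of the process nor the values of $\mathcal{I}$ and $\mathcal{B}$ entering the covariance, and using condition (2) to certify that conditioning on $\mathcal{B}$ does not activate a bi-directed connection introduced when the moving-average innovations are marginalized out — exactly the new phenomenon, visible in Figures \ref{fig:ExampleVarmaADMGWith} and \ref{fig:ExampleVarmaADMGWithEquiv}, that is absent in the VAR-without-MA setting of \cite{thams2022identifying}.
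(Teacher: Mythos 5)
Your parts (ii) and (iii) are essentially sound and match the paper's treatment, and your path-factorization identity $Y-\beta\mathcal{X}=\tilde Y$ (factoring each innovation-to-$Y$ path at its last $\mathcal{X}$-node) is a correct way of obtaining the decomposition that the paper derives by partially iterating structural equations. The gap is in the second step of (i). You apply Theorem \ref{thm:GmpVarmaWith} to the ``modified process'' whose graph is $\mathcal{G}_S$ with the $\mathcal{X}$-causal out-edges removed. But that theorem applies only to VARMA processes as in Definition \ref{def:VARMAWith}, whose full-time graphs are time-invariant. The deleted edges emanate from the finite, time-indexed set $\mathcal{X}$, so the modified graph is \emph{not} the full-time marginalized ADMG of any VARMA process: the modification is local in time and destroys the repetitive structure on which Definition \ref{def:FullTimeMarginalizedADMG} and the proof of Theorem \ref{thm:GmpVarmaWith} rest. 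Your parenthetical justification --- ``only some coefficients are set to zero, which preserves the stationarity determinant condition'' --- fails on both counts: deleting out-edges of $\mathcal{X}$ is not a coefficient-zeroing (zeroing coefficients would remove the corresponding edges at \emph{all} time points, which breaks your identity $\tilde Y=Y-\beta\mathcal{X}$ because paths through time-shifted copies of $\mathcal{X}$ would also be killed), and coefficient-zeroing does not preserve the stability condition anyway (e.g.\ $A_1=\bigl(\begin{smallmatrix}1.2 & -0.5\\ 1 & 0\end{smallmatrix}\bigr)$ has spectral radius below one, but becomes unstable once the $(1,2)$ entry is set to zero). So the independence $\mathcal{I}\indep\tilde Y\mid\mathcal{B}$ is obtained from a theorem invoked outside its hypotheses.

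Moreover, even granting a Markov property for such a locally modified process, you must still show that the modified-process values of $\mathcal{I}$ and $\mathcal{B}$ coincide with the original ones, since otherwise the independence concerns the wrong random variables. You flag this as ``the delicate point'' but only assert that condition \textit{(2)} handles it. For $\mathcal{B}$ this is immediate from \textit{(2)} (no node of $\mathcal{B}$ is a descendant of $\mathcal{X}$), but nodes of $\mathcal{I}$ \emph{can} be descendants of $\mathcal{X}$, and ruling out that some deleted edge lies on a directed path into $\mathcal{I}$ requires a genuine path-construction argument combining \textit{(1)} and \textit{(2)}; it is not a one-line consequence of either. The paper's proof avoids both problems by never leaving the original process: it writes $Y=\beta\mathcal{X}+\pi\mathcal{B}+\gamma R+\varepsilon$, proves the purely graphical claim $\mathcal{I}\perp_{\mathcal{G}_S}R\mid\mathcal{B}$ in the \emph{original} ADMG (by contradiction, building a path that would violate \textit{(1)}), applies Theorem \ref{thm:GmpVarmaWith} to the original process, and handles the accumulated MA innovations $\varepsilon$ separately through Lemma \ref{lemma:TranslationedgesVarmaWith} and Proposition \ref{prop:DisjointMArepDirectVarmaWith}. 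To repair your argument you would either have to prove a global Markov property for finitely modified (hence non-time-invariant) VARMA processes, or restructure part (i) along the paper's lines.
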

\begin{remark}
    In the  case of  $\mathcal{B}=\emptyset$,  conditions \textit{(1)} and \textit{(3)} in Theorem \ref{thm:IVregressionVARMA} reduce to:
    \begin{itemize}
        \item[\textit{(1')}] $\mathcal{I}$ and $Y$ are $m$-separated  in $\mathcal{G}_S$ after removing all  edges outgoing $\mathcal{X}$ which are on a causal path from $\mathcal{X}$ to $Y$ in $\mathcal{G}_S$,
        \item[\textit{(3')}] $\mathbb{C}\text{ov}(\mathcal{X},\mathcal{I})$ has rank $\dim(\mathcal{X})$, that is full row rank.
    \end{itemize}
    Most notably, condition \textit{(2)} is trivially satisfied.
\end{remark}
\begin{remark} \label{remark:IVDimensionality}
    For condition \textit{(3)} to be satisfied, as $\mathbb{E}[\mathbb{C}\text{ov}(\mathcal{X},\mathcal{I}|\mathcal{B})]$ has dimensions $\dim(\mathcal{X})\times \dim(\mathcal{I})$, a necessary condition  is that $\dim(\mathcal{X})\leq \dim(\mathcal{I})$. This motivates the following definition.
\end{remark}
\begin{definition}
     In the context of Theorem \ref{thm:IVregressionVARMA}, if conditions \textit{(1)} and \textit{(2)} hold, but condition \textit{(3)} is violated due to $\dim(\mathcal{X})>\dim(\mathcal{I})$, we say the total causal effect is {under-identified} by the {IV} regression.
\end{definition}
\begin{example}\label{example:IVVarma}
    Consider  the 2-dimensional VARMA(1,1) process defined as:
    \begin{equation}\label{eq:ExampleVarmaDAG}
        \begin{cases}
            X_t=\frac{1}{2}X_{t-1}+\varepsilon_t^X+\frac{1}{4}\varepsilon_{t-1}^Y\\
            Y_t=\frac{1}{3}X_{t-1}+\frac{1}{2}Y_{t-1}+\varepsilon_t^Y
        \end{cases},
    \end{equation}
    with $\varepsilon_t\overset{\text{i.i.d.}}{\sim}\mathcal{N}(0,I_2)$. Its associated full-time DAG $\mathcal{G}_{full}$ can be seen in Figure \ref{fig:ExampleVarmaDAG}, and its associated full-time marginalized ADMG seen in Figure \ref{fig:ExampleVarmaADMG}.  
    \begin{figure}[ht]
        \centering
        \begin{tikzpicture}[
        main/.style={draw, circle, thick, minimum size=12mm}
        ]
        \node[main] (xt) {$X_t$};
        \node[main] (yt) [below = of xt] {$Y_t$};
        \node[main] (xt-1) [left =  of xt]{$X_{t-1}$};
        \node[main] (yt-1) [below = of xt-1] {$Y_{t-1}$};
        \node[main] (xt-2) [left =  of xt-1]{$X_{t-2}$};
        \node[main] (yt-2) [below = of xt-2] {$Y_{t-2}$};
        \node[main] (xt+1) [right =  of xt]{$X_{t+1}$};
        \node[main] (yt+1) [below = of xt+1] {$Y_{t+1}$};
        \node[main, draw=none] (top) [right = of xt+1] {};
        \node[main, draw=none] (bot) [below = of top] {};
        \node[main, draw=none] (pasttop) [left = of xt-2] {};
        \node[main, draw=none] (pastbot) [below = of pasttop] {};
        \node[main] (ext)[above = of xt] {$\varepsilon^X_t$};
        \node[main] (eyt) [below = of yt] {$\varepsilon^Y_t$};
        \node[main] (ext-1) [left =  of ext]{$\varepsilon^X_{t-1}$};
        \node[main] (eyt-1) [below = of yt-1] {$\varepsilon^Y_{t-1}$};
        \node[main] (ext-2) [left =  of ext-1]{$\varepsilon^X_{t-2}$};
        \node[main] (eyt-2) [below = of yt-2] {$\varepsilon^Y_{t-2}$};
        \node[main] (ext+1) [right =  of ext]{$\varepsilon^X_{t+1}$};
        \node[main] (eyt+1) [below = of yt+1] {$\varepsilon^Y_{t+1}$};
        \node[main, draw=none] (epastbot) [below = of pastbot] {};
        
        \draw[-{Stealth[length=2.5mm]}] (xt.south) --  (yt+1.north);
        \draw[-{Stealth[length=2.5mm]}] (xt-1.east) -- node[above] {$\frac{1}{2}$} (xt.west);
        \draw[-{Stealth[length=2.5mm]}] (yt-1.east) -- node[pos=0.2, above] {$\frac{1}{2}$} (yt.west);
        \draw[-{Stealth[length=2.5mm]}] (xt-1.south) -- node[right, above] {$\frac{1}{3}$} (yt.north);
        \draw[-{Stealth[length=2.5mm]}] (xt.east) -- (xt+1.west);
        \draw[-{Stealth[length=2.5mm]}] (yt.east) -- (yt+1.west);
        \draw[-{Stealth[length=2.5mm]}] (xt-2.east) -- (xt-1.west);
        \draw[-{Stealth[length=2.5mm]}] (yt-2.east) -- (yt-1.west);
        \draw[-{Stealth[length=2.5mm]}] (xt-2.south) -- (yt-1.north);
        \draw[-{Stealth[length=2.5mm]}] (xt+1.south) --  (bot.north);
        \draw[-{Stealth[length=2.5mm]}] (xt+1.east) -- (top.west);.
        \draw[-{Stealth[length=2.5mm]}] (yt+1.east) -- (bot.west);
        \draw[-{Stealth[length=2.5mm]}] (pasttop.east) -- (xt-2.west);
        \draw[-{Stealth[length=2.5mm]}] (pastbot.east) -- (yt-2.west);
        \draw[-{Stealth[length=2.5mm]}] (pasttop.south) -- (yt-2.north);
        \draw[-{Stealth[length=2.5mm]}] (ext.south) --  (xt.north);
        \draw[-{Stealth[length=2.5mm]}] (ext-1.south) --  (xt-1.north);
        \draw[-{Stealth[length=2.5mm]}] (ext-2.south) --  (xt-2.north);
        \draw[-{Stealth[length=2.5mm]}] (ext+1.south) --  (xt+1.north);
        \draw[-{Stealth[length=2.5mm]}] (eyt.north) --  (yt.south);
        \draw[-{Stealth[length=2.5mm]}] (eyt-1.north) --  (yt-1.south);
        \draw[-{Stealth[length=2.5mm]}] (eyt-2.north) --  (yt-2.south);
        \draw[-{Stealth[length=2.5mm]}] (eyt+1.north) --  (yt+1.south);
        \draw[-{Stealth[length=2.5mm]}] (eyt.north) --  (xt+1.south);
        \draw[-{Stealth[length=2.5mm]}] (eyt-1.north) -- node[pos=0.3,below] {$\frac{1}{4}$}  (xt.south);
        \draw[-{Stealth[length=2.5mm]}] (eyt-2.north) --  (xt-1.south);
        \draw[-{Stealth[length=2.5mm]}] (epastbot.north) --  (xt-2.south);
        \draw[-{Stealth[length=2.5mm]}] (eyt+1.north) --  (top.south);
        
    \end{tikzpicture}
    \caption[Full-time DAG  of the process  defined in  \eqref{eq:ExampleVarmaDAG}.]{Full-time DAG $\mathcal{G}_{full}$ of the time series  defined in Equation \eqref{eq:ExampleVarmaDAG}. The coefficients of the edges correspond to the coefficients of the process.}
    \label{fig:ExampleVarmaDAG}
    \end{figure}
    \begin{figure}[ht]
        \centering
        \begin{tikzpicture}[
        main/.style={draw, circle, thick, minimum size=12mm}
        ]
        \node[main] (xt) {$X_t$};
        \node[main] (yt) [below = of xt] {$Y_t$};
        \node[main] (xt-1) [left =  of xt]{$X_{t-1}$};
        \node[main] (yt-1) [below = of xt-1] {$Y_{t-1}$};
        \node[main] (xt-2) [left =  of xt-1]{$X_{t-2}$};
        \node[main] (yt-2) [below = of xt-2] {$Y_{t-2}$};
        
        \node[main] (xt+1) [right =  of xt]{$X_{t+1}$};
        \node[main] (yt+1) [below = of xt+1] {$Y_{t+1}$};
        \node[main, draw=none] (top) [right = of xt+1] {};
        \node[main, draw=none] (bot) [below = of top] {};
        \node[main, draw=none] (pasttop) [left = of xt-2] {};
        \node[main, draw=none] (pastbot) [below = of pasttop] {};
        \draw[-{Stealth[length=2.5mm]}] (xt.south) --  (yt+1.north);
        \draw[-{Stealth[length=2.5mm]}] (xt-1.east) --  (xt.west);
        \draw[-{Stealth[length=2.5mm]}] (yt-1.east) --  (yt.west);
        \draw[-{Stealth[length=2.5mm]}] (xt-1.south) --  (yt.north);
        \draw[-{Stealth[length=2.5mm]}] (xt.east) -- (xt+1.west);
        \draw[-{Stealth[length=2.5mm]}] (yt.east) -- (yt+1.west);
        \draw[-{Stealth[length=2.5mm]}] (xt-2.east) -- (xt-1.west);
        \draw[-{Stealth[length=2.5mm]}] (yt-2.east) -- (yt-1.west);
        \draw[-{Stealth[length=2.5mm]}] (xt-2.south) -- (yt-1.north);
        \draw[-{Stealth[length=2.5mm]}] (xt+1.south) --  (bot.north);
        \draw[-{Stealth[length=2.5mm]}] (xt+1.east) -- (top.west);.
        \draw[-{Stealth[length=2.5mm]}] (yt+1.east) -- (bot.west);
        \draw[-{Stealth[length=2.5mm]}] (pasttop.east) -- (xt-2.west);
        \draw[-{Stealth[length=2.5mm]}] (pastbot.east) -- (yt-2.west);
        \draw[-{Stealth[length=2.5mm]}] (pasttop.south) -- (yt-2.north);
        \draw[{Stealth[length=2.5mm]}-{Stealth[length=2.5mm]}] (yt-1.north) --  (xt.south);
        \draw[{Stealth[length=2.5mm]}-{Stealth[length=2.5mm]}] (yt-2.north) --  (xt-1.south);
        \draw[{Stealth[length=2.5mm]}-{Stealth[length=2.5mm]}] (yt.north) --  (xt+1.south);
        \draw[{Stealth[length=2.5mm]}-{Stealth[length=2.5mm]}] (yt+1.north) --  (top.south);
        \draw[{Stealth[length=2.5mm]}-{Stealth[length=2.5mm]}] (pastbot.north) --  (xt-2.south);
    \end{tikzpicture}
    \caption[Full-time marginalized  ADMG  of the process in \eqref{eq:ExampleVarmaDAG}.]{ Full-time marginalized  ADMG $\mathcal{G}_{S}$ of the time series  defined in Equation \eqref{eq:ExampleVarmaDAG}.}
    \label{fig:ExampleVarmaADMG}
    \end{figure}
    
    With the aim of identifying the total causal effect of $X_{t-1}$ on $Y_t$, we could try and consider $Y=Y_t, \mathcal{X}=\{X_{t-1}\},\mathcal{B}=\emptyset$ and $\mathcal{I}=\{X_{t-2}\}$. However, in this setting condition \textit{(1')} is not satisfied  due to the open path $X_{t-2}\to Y_{t-1}\to Y_t$. In order to circumvent this issue, we extend $\mathcal{X}$ to $\mathcal{X}=\{X_{t-1}, Y_{t-1}\}$, and then take $\beta_1$ as the total causal effect from $X_{t-1}$ on $Y_t$. This is correct owing to the fact that no directed path from $X_{t-1}$ to $Y_t$ crosses $Y_{t-1}$. Yet at this point another issue arises: We have $\dim(\mathcal{X})=2$ but $\dim(\mathcal{I})=1$ meaning that, as explained in Remark \ref{remark:IVDimensionality}, the total effect will be under-identified by the {IV} regression so far defined. To solve this problem, we extend the set of instruments to $\mathcal{I}=\{X_{t-2},Y_{t-2}\}$. Condition \textit{(1')} now  holds. Furthermore, one can easily check that:
    $$\mathbb{C}\text{ov}(\mathcal{X},\mathcal{I})=\begin{pmatrix}
        17/24 & 137/108 \\ 53/54 & 377/972
    \end{pmatrix},$$ which has  full row rank of 2, and therefore condition \textit{(3')}  is also satisfied. Therefore we can use Theorem \ref{thm:IVregressionVARMA}  to compute the total causal effect of $\mathcal{X}$ on $Y=Y_t$. Indeed, one can check that $\mathbb{C}\text{ov}(Y,\mathcal{I})=(157/216, 1199/1944)^{\top}$ and thus: $$\beta \begin{pmatrix}
        17/24 & 137/108 \\ 53/54 & 377/972
    \end{pmatrix}=\begin{pmatrix} 157/216\\ 1199/1944 
    \end{pmatrix}\Rightarrow \beta=\left(\frac{1}{3},\frac{1}{2}\right),$$
    which is the correct total causal effect computed following Definition \ref{def:TotalCausalEffect}. Thus, we conclude that the total causal effect of $X_{t-1}$  on $Y_t$ is 1/3.
\end{example}
\section{Conclusions and future work}\label{sec:Conclusion}
In this work we have focused on the study of causal aspects and properties for time series under the structural assumption of a VARMA model with instantaneous effects. Allowing these effects in our model not only introduces greater complexity and generality. Thus, the first notable contribution presented in this work is the proof of the global Markov property for VAR processes with instantaneous effects (Theorem \ref{thm:GMPWith}), extending \cite[Thm. 1]{thams2022identifying}. In Section \ref{sec:GMPVarma} we  studied this global property for VARMA processes, again with instantaneous effects (Theorem \ref{thm:GmpVarmaWith}). We defined a full-time graph over the endogenous variables of the process, and proved that the GMP holds with respect to this infinite graph. 


In Section \ref{chap:faithfulness} we  studied the counterpart of the global Markov property: Faithfulness. As explained in said section, faithfulness holds almost surely for finite linear Gaussian systems under assumptions on the sampling of the process's coefficients. Taking advantage of the resemblance between VARMA models and linear Gaussian systems, a faithfulness statement is given in Theorem \ref{thm:FaithfulnessVARMAwith}.

At this point, with both the global Markov property and faithfulness at hand, we sink into the field of causal inference. Extending the work of \cite{thams2022identifying}, in Section \ref{chap:IV} we define the concept of total causal effect for VARMA processes with instantaneous effects. We developed an instrumental variable regression framework which allows to identify and consistently estimate said effects. The identifiability condition and the expression of this estimator can be found in Theorem \ref{thm:IVregressionVARMA}.

Overall, the most notable contribution of this work is the fact that we have added instantaneous effects and the MA part to existing techniques for VAR structures, which allows to consider  less trivial and wider families of processes. Nevertheless, this thesis poses more questions than answers. Regarding the global Markov property, a clear next step would be to extend it tp a broader class of processes, like  VARCH or V-GARCH structures, or even non-parametric formulations. This would get closer to the recent work of \cite{hochsprungglobal}. A discussion of these authors' work can be found in Section \ref{sec:Hochsprung}. 

The faithfulness study in Chapter \ref{chap:faithfulness} could also be expanded. The assumptions made in Theorem \ref{thm:FaithfulnessVARMAwith} are standard and analogous to those made in \cite[Thm. 3.2]{spirtes2001causation}. Thus, next steps would be similar to those for the GMP: Abandoning the linearity and/or the additive noise assumptions so as to expand the range of processes considered. Regarding the {IV} estimator of total causal effects defined in Theorem \ref{thm:IVregressionVARMA}, further of its properties could be investigated. While we have provided a limit in probability to the true causal effect, asymptotic distribution results are in order. Ideally, a central limit theorem could be proven. This would allow to easily compute confidence intervals for this estimator.  These, among others, might be the subject of future work.

\begin{credits}
\subsubsection{\ackname} I  thank Prof. Dr. Jonas Peters for posing the problems, the regular meetings and the helpful scientific discussions, as the contect of this work arises from my (unpublished) master thesis at ETH-Zurich. I remain thankful to the Barri\'e Foundation, of which I am a Fellow of the 2022 call, for their support throughout my graduate studies.

\subsubsection{\discintname}
The author has no competing interests to declare that are
relevant to the content of this article. 
\end{credits}
%
%
%
\bibliographystyle{splncs04}
\bibliography{myReferences}
%




\appendix
\section{Proofs}\label{sec:AppendixProofs}
\propExtension*
\begin{proof}
    $A\perp_{\mathcal{G}}C|B$  is equivalent to $A$ and $C$ being separated by $B$ in $\mathcal{G}^m$. Then $A$ and $C$ are in different connected components of $(\mathcal{G}^m)_{V\backslash B}$. Consider  $A^+$ the connected component of $(\mathcal{G}^m)_{V\backslash B}$ which contains $A$, and idem for $C$. Regarding  all other nodes in $(\mathcal{G}^m)_{V\backslash B}$ (if there were any), as they are in other connected components of $(\mathcal{G}^m)_{V\backslash B}$, they are separated from both $A^+$ and $C^+$ by $B$ in $\mathcal{G}^m$, or equivalently they are $d$-separated from both $A^+$ and $C^+$ by $B$ in $\mathcal{G}$. Therefore they can be arbitrarily added to $A^+$ or to $C^+$ while keeping these sets disjoint and $d$-separated by $B$. 
\end{proof}
\coroExtension*
\begin{proof}
    Consider a DAG $\mathcal{G}$ over nodes $V$. Let $A,B,C\subset V$ pairwise disjoint sets of nodes such that $V=AN_{\mathcal{G}}(A\cup B\cup C)$ and $A\perp_{\mathcal{G}}C|B$. Consider $A^+,C^+\subset V$ such that $A\subseteq A^+$, $C\subseteq C^+$, $A^+\cap C^+=\emptyset$, $A^+\perp_{\mathcal{G}}C^+|B$ and $V=A^+\cup B \cup C^+$. This $d$-separation statement is equivalent to $A^+$ and $C^+$ being separated by $B$ in $\mathcal{G}^m$, i.e. they are subsets of different connected components of $(\mathcal{G}^m)_{V\backslash B}$. As this graph only has two connected components and $V=A^+\cup B \cup C^+$, necessarily $A^+$ and $C^+$ are precisely the two connected components of $(\mathcal{G}^m)_{V\backslash B}$. This concludes the proof.
\end{proof}
\propSeparationADMG*
\begin{proof}
    This proof is a generalization of the proof of \cite[Prop. 3]{lauritzen1990independence}, which makes an analogous statement for the moralization of a DAG. We now prove sufficiency and necessity. 
    
    ``$\Leftarrow$'': Assume $B$ does not $m$-separate $A$ and $C$. Then there is a path $\mathfrak{p}$ $m$-connecting $A$ and $C$ given $B$ in $\mathcal{G}$. This path is contained in $AN_{\mathcal{G}}(A\cup B\cup C)$. Indeed, if a node $\gamma$ in $\mathfrak{p}$ is a collider, then $\gamma\in AN_{\mathcal{G}}(B)$. Otherwise,  one of the sides of $\mathfrak{p}$ exits $\gamma$. Assume w.l.o.g. that we have $\gamma\to\cdots A$. If all the edges in this path are directed edges towards $A$, then $\gamma\in AN_{\mathcal{G}}(A)$. Otherwise we encounter a collider in the path which is a descendant of $\gamma$  and ancestor of $B$, meaning $\gamma\in AN_{\mathcal{G}}(B)$. Furthermore, every node of $\mathfrak{p}$ which is in $B$ is necessarily a collider and therefore, when augmenting, an edge is created circumventing this node, meaning we create a path in $(\mathcal{G}_{AN_{\mathcal{G}}(A\cup B\cup C)})^{aug}$ between $A$ and $C$ which does not contain nodes in $B$. Therefore, $A$ and $C$ are not separated by $B$ in $(\mathcal{G}_{AN_{\mathcal{G}}(A\cup B\cup C)})^{aug}$. 

    ``$\Rightarrow$'': Assume $A$ and $C$ are not separated by $B$ in $(\mathcal{G}_{AN_{\mathcal{G}}(A\cup B\cup C)})^{aug}$. Then there is a path $\mathfrak{q}$  between $A$ and $C$ in $(\mathcal{G}_{AN_{\mathcal{G}}(A\cup B\cup C)})^{aug}$ circumventing $B$. This path has edges corresponding to edges in the original ADMG, and others originated by marrying collider connected nodes. We will now build a path $\mathfrak{p}$ in $\mathcal{G}$ $m$-connecting $A$ and $C$ given $B$. For every edge in $\mathfrak{q}$ corresponding to an edge in $\mathcal{G}$, we keep that edge and the nodes it connects in $\mathfrak{p}$. If an edge of $\mathfrak{q}$ is not in $\mathcal{G}$, then there is a collider path in $\mathcal{G}_{AN_{\mathcal{G}}(A\cup B\cup C)}$ between the nodes that edge connects, and we take that collider path in $\mathfrak{p}$. First, we can assume that every node in $\mathfrak{q}$ is a non-collider in $\mathfrak{p}$. Otherwise, there would be an edge in $(\mathcal{G}_{AN_{\mathcal{G}}(A\cup B\cup C)})^{aug}$ connecting the  nodes adjacent  to this collider in $\mathfrak{q}$, meaning we can take this edge in $\mathfrak{q}$ and skip the collider. Second, for all colliders in $\mathfrak{p}$ (which are not nodes of $\mathfrak{q}$), two things can happen. Let $\gamma$ be a collider in $\mathfrak{p}$:
    \begin{enumerate}
        \item If $\gamma$ has a descendant in $B$ it does not block $\mathfrak{p}$ given $B$,
        \item if $\gamma$ has no descendants in $B$, then it is an ancestor of $A$ or $C$, and all the nodes in such a descendant path are non-colliders not in $B$. Furthermore, when taking this descendant path back to $\gamma$ and then following $\mathfrak{p}$, $\gamma$ becomes a non-collider not in $B$.
    \end{enumerate}
    Now we will modify $\mathfrak{p}$ to obtain a $m$-connecting path.
    \begin{itemize}
        \item[(a)]  If $\mathfrak{p}$ has no colliders, or all colliders in $\mathfrak{p}$ are under i.), then $\mathfrak{p}$ is an open path between $A$ and $C$ given $B$ in $\mathcal{G}$.
        \item[(b)]  If there is a collider in $\mathfrak{p}$ under ii.), we again separate two cases:
        \begin{itemize}
            \item[(b.1)] All colliders under ii.) have descendants in $C$. Then we traverse $\mathfrak{p}$ starting in $A$, and once we reach the first of these colliders, we take the descendant path to $C$. This is an open path given $B$.
            \item[(b.2)] some colliders under ii.) have descendants in $A$, and others in $C$. Denote by $\delta$ the last collider under ii.) with descendants in $A$ we encounter when traversing $\mathfrak{p}$ from $A$ to $C$. We  take this ancestral path from $A$ to $\delta$, and then traverse $\mathfrak{p}$ towards $C$. If we find a collider under ii.), which necessarily has a descendant in $C$, we take this descendant path to $C$. Otherwise we follow the former path until $C$. This gives  an open path.
        \end{itemize}
    \end{itemize}
    This concludes the proof.    
\end{proof}
\propExtensionADMG*
\begin{proof}
    This proof is analogous to that of Proposition \ref{prop:Extension}, taking Proposition \ref{prop:SeparationADMG} into account.
\end{proof}
%

\propExtendedDAG*
\begin{proof}
    Denote $B=(I_d-A_0)^{-1}$. As $I_d-A_0$ is lower triangular with ones on the diagonal, so is $B$. Then the coefficients of the edges with lag $k\in\{1,\ldots,p\}$ are given by $BA_k$. For the sake of notation, denote $A=A_k$. Then the coefficient of the edge from $S_{t-k}^i$ to $S_t^i$ in $\mathcal{G}_{II}$ is given by: 
    \begin{equation}\label{eq:EdgeCoeffs}
        (BA)_{i,j}=\sum_{l=1}^dB_{i,l}A_{l,j}=A_{i,j}+\sum_{l=1}^{i-1}B_{i,l}A_{l,j}.
    \end{equation}
    For said edge to exist in $\mathcal{G}_{II}$, this coefficient has to be non-zero. For that we need at least one of the addends to be non-zero, meaning either $A_{i,j}\neq 0$ (the edge already exists in $\mathcal{G}_I$) or $B_{i,l}A_{l,j}\neq 0$ for some $l\in\{1,\ldots,i-1\}$. To understand the meaning of $B_{i,l}A_{l,j}\neq 0$ we need to give a graph interpretation to $B_{i,l}$. The exact expression of $B$ can be found in \cite[Thm. 2]{baliarsingh2015explicit}, from which we conclude that:
    \begin{equation}\label{eq:InverseMatrix}
        B_{i.j}=\begin{cases}
            1 & \text{ if }i=j\\
            0 & \text{ if } j>i\\
            ICE^{(d)}_{j\to i} & \text{ if } j<i
        \end{cases},
    \end{equation}
    where $ICE^{(d)}_{j\to i}$ is defined as follows: In the DAG of instantaneous effects with $d$ nodes, $ICE^{(d)}_{j\to i}$ is the sum over all directed paths from $S^j$ to $S^i$ of the product of the coefficients of the edges in those paths (this is  the total causal effect of $S_t^j$ on $S_t^i$ in the DAG of instantaneous effects as per Definition \ref{def:TotalCausalEffect}). The proof of Equation \eqref{eq:InverseMatrix} is done by induction of the dimension of the process $d$. For $d=2$ it holds, as:
    $$\begin{pmatrix}
        1&0\\-\alpha_{2,1}&1
    \end{pmatrix}^{-1}=\begin{pmatrix}
        1&0\\\alpha_{2,1}&1
    \end{pmatrix}=\begin{pmatrix}
        1&0\\ICE^{(2)}_{1\to 2}&1
    \end{pmatrix}.$$
    Now  assume that Equation \eqref{eq:InverseMatrix} holds for $d$, and let us prove it for $d+1$. Following \cite{baliarsingh2015explicit} we know that $B$ is lower triangular with a diagonal of ones. Therefore, the equation $(I_{d+1}-A_0)B=I_{d+1}$ can be written out as:
    \begin{equation*}
        \begin{pmatrix}
            1 & 0 &   \dots  &  \dots &0\\
            -\alpha_{2,1} & 1 &\ddots &   &\vdots\\
            \vdots &  \ddots  & \ddots  &  0 &\vdots\\
            -\alpha_{d,1} &\dots& -\alpha_{d,d-1}&1&0\\
            -\alpha_{d+1,1} &\dots& -\alpha_{d+1,d-1}&-\alpha_{d+1,d}&1
        \end{pmatrix}
        \begin{pmatrix}
            1 & 0 &   \dots  &  \dots &0\\
            B_{2,1} & 1 &\ddots &   &\vdots\\
            \vdots &  \ddots  & \ddots  &  0 &\vdots\\
            B_{d,1} &\dots& B_{d,d-1}&1&0\\
            B_{d+1,1} &\dots& B_{d+1,d-1}&B_{d+1,d}&1
        \end{pmatrix}=I_{d+1},
    \end{equation*}
    where we see that the equations related to the first $d$ rows of $B$ do not involve any element from row $d+1$, i.e. block matrix multiplication tell us that the first $d$ rows of $B$ are the same in dimension $d$ than in dimension $d+1$. The induction hypothesis tells us then that $B_{i,j}=ICE^{(d)}_{j\to i}$ for $j\in\{1,\ldots,i-1\}$ and $i\in\{2,\ldots,d\}$. Recall that we assume that the instantaneous nodes are numbered by the instantaneous topological ordering. Thus  $ICE^{(d)}_{j\to i}=ICE^{(d+1)}_{j\to i}=B_{i,j}$ for all $j\in\{1,\ldots,i-1\}$ and $i\in\{2,\ldots,d\}$. To compute $B_{d+1,j}$ we use the equation arising from row $d+1$ in $I_{d+1}-A_0$ and column $j$ in $B$, which reads: 
    $$-\alpha_{d+1,j}-\sum_{k=j+1}^d\alpha_{d+1,k}B_{k,j}+B_{d+1,j}=0,$$
    what, using the induction hypothesis again,  leads to:
    $$    B_{d+1,j}\overset{I.H.}{=}\alpha_{d+1,j}+\sum_{k=j+1}^d\alpha_{d+1,k}ICE^{(d)}_{j\to k}=ICE^{(d+1)}_{j\to d+1},$$
    as $\alpha_{d+1,k}ICE^{(d)}_{j\to k}$ is the sum of the products of the coefficients of all paths which go from $j$ to $k$ and then use the edge $k \to (d+1)$. Hence this proves Equation \eqref{eq:InverseMatrix}.
    
    Returning to the interpretation of the addends in Equation \eqref{eq:EdgeCoeffs} we can now understand that $B_{i,l}A_{l,j}\neq 0$ means that $S_{t-k}^j$ is a parent of $S_t^l$ in $\mathcal{G}_I$ ($A_{l,j}\neq 0$) and $S_t^l$ is an instantaneous ancestor of $S_t^i$ (meaning there is a directed instantaneous path from $S_t^l$ to $S_t^i$). Hence, an edge can appear in $\mathcal{G}_{II}$ from $S_{t-k}^j$ to  $S_t^i$ if $S_{t-k}^j$ is a parent of an instantaneous ancestor of  $S_t^i$. 

    However, $\mathcal{G}_{II}$  might not be  exactly $\mathcal{G}^*$, but it always contains less edges than this DAG. $\mathcal{G}^*$ contains every non-instantaneous edge $\mathcal{G}_I$ has, and some others. Yet, some of the non-instantaneous edges of $\mathcal{G}_I$ might vanish in $\mathcal{G}_{II}$ due to cancellation of the sum in Equation \eqref{eq:EdgeCoeffs}. Therefore, $\mathcal{G}^*$ contains every non-instantaneous edge in $\mathcal{G}_{II}$, and both have no instantaneous edges. This concludes the proof. 
\end{proof}
\coroAncestors*
\begin{proof}
    By construction, $AN_{\mathcal{G}_{II}}(S_t^i)$ contains no contemporaneous nodes. Regarding non-contemporaneous nodes, if $S_{t-k}^j\in AN_{\mathcal{G}_{II}}(S_t^i)$, then either $S_{t-k}^j$ is a parents of $S_{t}^i$ in $\mathcal{G}_I$, or it is a parent of a contemporaneous ancestor of $S_{t}^i$ in $\mathcal{G}_I$. In any case    $S_{t-k}^j\in AN_{\mathcal{G}_{I}}(S_t^i)$. This concludes the proof. 
\end{proof}

\begin{remark}
    The two sets of ancestors mentioned in Proposition \ref{coro:Ancestors} might not be equal because, as explained in the proof of Proposition \ref{prop:ExtendedDAG}, some edges which are not contemporaneous  in $\mathcal{G}_I$ might not exist in $\mathcal{G}_{II}$, changing  the ancestral set.  
\end{remark}
\propVarianceInnovations*
\begin{proof}
    Recall that we can, w.l.o.g. take $A_0$ to be triangular inferior with zeroes on its diagonal, so that $B=(I_d-A_0)^{-1}$ is also triangular inferior with diagonal of ones. As $\delta_t=B\varepsilon_t$ we have that:
    $$\delta_t^i=\sum_{k=1}^dB_{i,k}\varepsilon_t^k=\varepsilon_t^i+\sum_{k=1}^{i-1}B_{i,k}\varepsilon_t^k,$$
    and, assuming w.l.o.g. that $i<j$:
    $$\delta_t^j=\sum_{k=1}^dB_{j,k}\varepsilon_t^k=\sum_{k=1}^{i}B_{j,k}\varepsilon_t^k+\underbrace{\sum_{k=i+1}^{j-1}B_{j,k}\varepsilon_t^k+\varepsilon_t^j}_{\text{the innovations here do not enter } \delta_t^i}.$$
    Now, as $S_t^i$ and $S_t^j$ have no common instantaneous ancestors, $B_{j,i}=0$ and thus $\varepsilon_t^i$ does not appear in $\delta_t^j$. Furthermore, for $l<i$, again as $S_t^i$ and $S_t^j$ have no common instantaneous ancestors, $B_{i,l}B_{j,l}=0$, and thus $\varepsilon_t^l$ does not enter both $\delta_t^i$ and $\delta_t^j$  simultaneously. As the components of $\varepsilon_t$ are jointly independent, we conclude that $\delta_t^i\indep\delta_t^j$. 
    
\end{proof}
\lemmaGeneralization*
\begin{proof}
    Consider the MA($\infty$) representation of $A$ and $B$ given by the distributional equivalent representation of the process without instantaneous effects. The MA($\infty$) representation of $A$ is a linear combination (with absolutely addable coefficients) of the  innovations $\delta_t^i$ associated with the variables in $AN_{\mathcal{G}_{II}}(A)$. Due to the hypothesis of disjoint ancestral sets in $\mathcal{G}_I$ and Corollary \ref{coro:Ancestors}, we know that $AN_{\mathcal{G}_{II}}(A)\cap AN_{\mathcal{G}_{II}}(B)=\emptyset$. This means that no innovation $\delta_t^i$  appears in both MA($\infty$) representations of $A$ and $B$. However, this innovations could  be correlated. Nevertheless, as $A$ and $B$ have no contemporaneous ancestors (as they have no common ancestors in $\mathcal{G}_{I}$), Proposition \ref{prop:VarianceInnovations} guarantees that any contemporaneous innovations which might appear in their MA($\infty$) expansions  are independent (otherwise there would be two contemporaneous ancestors with  a common contemporaneous ancestor, which would be a common ancestor itself, in contradiction with the ancestral sets being disjoint). Therefore, the innovations which enter the MA($\infty$) representations of $A$ and $B$ are all independent, and thus $A\indep B$.
\end{proof}
\thmGMPWith*
\begin{proof}
    We first present here the proof of Theorem \ref{thm:GMPWithout}, which covers the case without instantaneous effects.     This proof is nothing but the one originally given in \cite[Thm. 1]{thams2022identifying}, where some points have been explained in greater depth. Given a time series (sub)graph $\mathcal{G}$ over nodes $V$ we define: $$ V_{[s,t]}\defeq\{S_v^i\colon i \in \{1,\ldots,d\} , s\leq v\leq t, S_v^i\in V\}.$$ Consider $s_0,t_0$ the largest and smallest time points respectively such that $A\cup B\cup C\subseteq V_{[s_0,t_0]}$, and let $q\in\mathbb{N}_{>p}$ be such that if two nodes in $V_{[s_0,t_0]}$ are $d$-connected (with empty conditioning set) in $\mathcal{G}_{full}$, then there is a $d$-connecting path in $\mathcal{G}_{[s_0-q,t_0]}$. Note that as $q>p$, $\mathcal{G}_{[s_0-q,s_0-1]}$ contains enough information to fully describe the time dependence of the process. Now consider the set: 
    \begin{equation*}
        \begin{split}
            \mathcal{A}\defeq\{n\in\mathbb{N}\colon  & \text{ for all graphs } \mathcal{G}^* \text{ over nodes } V^*\text{ with time indices between}\\& \quad\text{ $s_0-q$ and $t_0$ such that all edges in }\mathcal{G}^* \text{ point ``forward} \\&\quad\text{in time'', meaning for all }k\in\mathbb{N} \text{ and } i,j\in\{1,\ldots,d\}\text{ there is}\\&\quad\text{ no edge }S_t^i\to S_{t-k}^j, \text{ and }\mathcal{G}^* \text{ is a subgraph of  the }\\&\quad\text{ full-time graph } \mathcal{G}_{full}\text{ induced by } \mathcal{G}^0\defeq \mathcal{G}^*_{[s_0-q,s_0-1]} \text{ and }\\&\quad\ AN_{\mathcal{G}_{full}}(V^+)_{[s_0,t_0]}=V^+, \text{ where } V^+\defeq V^*_{[s_0,t_0]}\text{ and } |V^+|=n, \\&\text{ for all VAR($p$) processes with structure specified by } \mathcal{G}^0,\\&\text{ for all } A^*,B,C^*\subseteq V^* \text{ such that } V^+=A^+\cup B \cup C^+ \text{ where } \\&\quad\ A^+\defeq A^*_{[s_0,t_0]}, C^+\defeq C^*_{[s_0,t_0]}  \text{ and } A^*=A^+\cup (PA_{\mathcal{G}^*}(A^+)\cap V^0)\\&\quad  \text{ and }  C^*=C^+\cup (PA_{\mathcal{G}^*}(C^+)\cap V^0),\text{ with }V^0\text{ the nodes of }\mathcal{G}^0\\&\text{ we have: }\\&\quad\ A^*\perp_{\mathcal{G}^*}C^*|B\Rightarrow A^*\indep C^*|B      \}.
        \end{split}
    \end{equation*}
    The proof first shows that $\mathcal{A}=\mathbb{N}$ suffices to prove the theorem, and secondly shows that indeed $\mathcal{A}=\mathbb{N}$ by induction on $n$. Assume then that $\mathcal{A}=\mathbb{N}$. Consider $V^0\defeq(V_{full})_{[s_0-q,s_0-1]}$, $V^+\defeq AN_{\mathcal{G}_{full}}(A\cup B\cup C)_{[s_0,t_0]}$, $V^*\defeq V^0\cup V^+$ and $\mathcal{G}^*$ the restriction of $\mathcal{G}_{full}$ to $V^*$. Note how $A\perp_{\mathcal{G}_{full}}C|B\Rightarrow A\perp_{\mathcal{G}^*}C|B$. However, $A,B,C$ might not satisfy the conditions required for $A^*,B,C^*$. We need to expand $A$ and $C$ appropriately: 
    \begin{enumerate}
        \item If $V^+\neq A\cup B \cup C$, then we expand $A$ and $C$ to disjoint sets $A^+$ and $C^+$ respectively, as per Proposition \ref{prop:Extension}, such that $V^+=A^+\cup B\cup C^+$ and still $A^+\perp_{\mathcal{G}^*}C^+|B$.
        \item Consider $A^*\defeq A^+\cup (PA_{\mathcal{G}^*}(A^+)\cap V^0) $ and $C^*\defeq C^+\cup (PA_{\mathcal{G}^*}(C^+)\cap V^0)$. Note how $A^+=A^*_{[s_0,t_0]}=A^*\cap V^0$ (idem for $C^+$) as $V^+\cap V^0=\emptyset$.  Also, $A^*\cap C^*=\emptyset$, as otherwise, given than $A^+\cap C^+=\emptyset$, there would be a common parent of $A^+$ and $C^+$ not in $B$, violating $d$-separation. 
    \end{enumerate}
    It also holds  that  $A^*\perp_{\mathcal{G}^*}C^*|B$. Otherwise, if there is an open path from $a\in A^*$ to $c\in C^*$ given $B$, then there is an open path from $A^+$ (either $a$ or its descendant in $A^+$) to $C^+$ given $B$, violating $d$-separation. Now that we are under all conditions of $\mathcal{A}$, the hypothesis $\mathcal{A}=\mathbb{N}$ implies that $A^*\indep C^*|B$, what implies $A\indep C|B$ as $A\subset A^+\subset A^*$, and idem for $C$.

    Now it rests to show that $\mathcal{A}=\mathbb{N}$. This is done by induction on $n=|V^+|$. Recall that $V^+=A^+\cup B\cup C^+$ pairwise disjoint union.  Firstly note that $A^+=\emptyset\Rightarrow A^*=\emptyset$ (idem for $C^+$). Thus for $n=1$ and $n=2$ the only non-trivial case is $A^+=\{a\}, C^+=\{c\},B=\emptyset$. As  $A^*\perp_{\mathcal{G}^*}C^*$, $A^*$ and $C^*$ have no common ancestors in $\mathcal{G}^*$, but as $\mathcal{G}^*$ contains more than $p$ time steps, the repetitive structure of $\mathcal{G}_{full}$ implies that they have no common ancestors whatsoever. By iterating the structural equations (MA($\infty$) representation)  one concludes that $A^*\indep C^*$.

    It remains to prove that $n\in\mathcal{A}\Rightarrow n+1\in\mathcal{A}$. Consider $\mathcal{G}^*,\mathcal{G}^0,\mathcal{G}^+,A^+,A^*,B,$ $C^+,C^*$ as per the definition of $\mathcal{A}$ with $|V^+|=n+1$. The basic idea is to remove a node from $V^+$, consider the independence statements granted by the induction hypothesis, and then combine them to conclude the desired independence. Consider then $\lambda\in V^+$ a sink node in $\mathcal{G}^*$. As $V^+=A^+\cup B\cup C^+$ disjoint union, there are three cases to distinguish:
    \begin{itemize}
        \item Assume $\lambda\in A^+$. Then $C^*\cap PA_{\mathcal{G}_{full}}(\lambda)=\emptyset$ (as otherwise $d$-separation would be violated), what implies $PA_{\mathcal{G}_{full}}(\lambda)=PA_{\mathcal{G}^*}(\lambda)\subseteq (A^*\backslash\{\lambda\})\cup B$. Then there are coefficients $\gamma\in\mathbb{R}^{|PA_{\mathcal{G}^*}(\lambda)|}$ such that the structural equation for $\lambda$ is $\lambda=\gamma^{\top}PA_{\mathcal{G}^*}(\lambda)+\varepsilon^\lambda$. Thanks to independence of all innovations and the fact that $\lambda$ is a sink node (it does not appear in any other structural equation of nodes in $\mathcal{G}^*$), we conclude that:
        \begin{equation}\label{eq:IndependenceInA}
            \lambda\indep (A^*\cup B \cup C^*)\backslash(\{\lambda\}\cup PA_{\mathcal{G}^*}(\lambda))|PA_{\mathcal{G}^*}(\lambda),
        \end{equation} 
        and by the weak union property $\lambda\indep C^*|(A^*\cup B)\backslash\{\lambda\}$. If $A^*=\{\lambda\}$, then this already implies $A^*\indep C^*|B$. Otherwise, $A^*\perp_{\mathcal{G}^*}C^*|B$ implies $A^*\backslash\{\lambda\}\perp_{\mathcal{G}^*}C^*|B$, which is equivalent (by Proposition \ref{prop:SeparationMoralized}) to $A^*\backslash\{\lambda\}$ being separated from $C^*$ by $B$ in $(\mathcal{G}^*)^m$.  Then this separation also holds in $(\mathcal{G}^*_{V^*\backslash\{\lambda\}})^m$, as this graph contains no more edges. Again by Proposition \ref{prop:SeparationMoralized} this separation is equivalent to $A^*\backslash\{\lambda\}\perp_{\mathcal{G}^*_{V^*\backslash\{\lambda\}}}C^*|B$. As $|V^+\backslash\{\lambda\}|=n$, the induction hypothesis yields $A^*\backslash\{\lambda\}\indep C^*|B$. Combining these last two conditional independence statements by the contraction property, one concludes that $A^*\indep C^*|B$.
        \item Assume $\lambda\in C^+$. This scenario is analogous to the $\lambda\in A^+$ case.
        \item Assume $\lambda\in B$. The $d$-separation hypothesis is equivalent to $A^*$ and $C^*$ being separated by $B$ in $(\mathcal{G}^*)^m$, what implies they are also separated by $B\backslash\{\lambda\}$ in $(\mathcal{G}^*_{V^*\backslash\{\lambda\}})^m$ (as this graph contains no more edges than $(\mathcal{G}^*)^m$), what is equivalent to $A^*\perp_{\mathcal{G}^*_{V^*\backslash\{\lambda\}}}C^*|B\backslash\{\lambda\}$. As $|V^+\backslash\{\lambda\}|=n$, the induction hypothesis implies $A^*\indep C^*|B\backslash\{\lambda\}$. To achieve the desired independence claim, we need a second independence statement. We consider two cases:
        \begin{itemize}
            \item[(a)] Assume that:
            \begin{equation}\label{eq:ConditionsA}
                \begin{cases}
                    PA_{\mathcal{G}^+}(\lambda)\cap A^*\neq\emptyset\\\text{ or }\\AN_{\mathcal{G^*}}(PA_{\mathcal{G}^*}(\lambda)_{[s_0-q,s_0-1]})\cap AN_{\mathcal{G^*}}(PA_{\mathcal{G}^*}(A^*)_{[s_0-q,s_0-1]})\neq\emptyset
                \end{cases}.
            \end{equation}
            Then it holds that: 
            \begin{equation}\label{eq:ConsequenceA}
                \begin{cases}
                    PA_{\mathcal{G}^+}(\lambda)\cap C^*=\emptyset\\\text{ and }\\ AN_{\mathcal{G^*}}(PA_{\mathcal{G}^*}(\lambda)_{[s_0-q,s_0-1]})\cap AN_{\mathcal{G^*}}(PA_{\mathcal{G}^*}(C^*)_{[s_0-q,s_0-1]})=\emptyset
                \end{cases}.
            \end{equation}
            Indeed, if the first statement in \eqref{eq:ConsequenceA} was false, there would be a $d$-connecting path between $A^*$ and $C^*$ given $B$: First from the element in $PA_{\mathcal{G}^+}(\lambda)\cap C^*$ to $\lambda\ (\in B)$ and then either to the element in $PA_{\mathcal{G}^+}(\lambda)\cap A^*$ or to the common ancestor of  $PA_{\mathcal{G}^*}(\lambda)_{[s_0-q,s_0-1]}$ and $PA_{\mathcal{G}^*}(A^*)_{[s_0-q,s_0-1]}$ and then to $A^*$. Note how all elements in these paths are non-colliders not in $B$, except $\lambda\in B$. If the second statement in \eqref{eq:ConsequenceA} was false, then we can take a path from $C^*$ to $\lambda$ via the common ancestor of $PA_{\mathcal{G}^*}(\lambda)_{[s_0-q,s_0-1]}$ and $PA_{\mathcal{G}^*}(C^*)_{[s_0-q,s_0-1]}$, and then use the same path from $\lambda$ to $A^*$ as before, building again a $d$-connecting path given $B$. 

            The conditions given in Equation \eqref{eq:ConsequenceA} imply that: 
            \begin{equation}\label{eq:IndependenceInB}
                \lambda\indep C^*|A^*\cup (B\backslash\{\lambda\}).
            \end{equation}
            This owes to the fact that, as $PA_{\mathcal{G}^+}(\lambda)\subseteq A^*\cup (B\backslash\{\lambda\}) $, in the structural equation of $\lambda|A^*\cup (B\backslash\{\lambda\})$ we can replace $PA_{\mathcal{G}^*}(\lambda)_{[s_0-q,s_0-1]}$ by their MA($\infty$) representation. For $C^*$, their structural equations are iterated except for the nodes of $B\backslash\{\lambda\}$ and $PA_{\mathcal{G}^*}(C^*)_{[s_0-q,s_0-1]}$ (variables in $A^*$ cannot occur as then there would be a directed path from $A^*$ to $C^*$), followed by then substituting $PA_{\mathcal{G}^*}(C^*)_{[s_0-q,s_0-1]}$ by their MA($\infty$) representation. Note how, due to the lack of common ancestors, these two MA($\infty$) expansions involve no common innovations. Thus, as all the error terms are independent, we can conclude that $\lambda\indep C^*|A^*\cup (B\backslash\{\lambda\})$. Together with  $A^*\indep C^*|B\backslash\{\lambda\}$, the contraction property followed by the weak-union property imply $C^*\indep A^*|B$.
            \item[(b)] Now assume the complement of Equation \eqref{eq:ConditionsA}:
            \begin{equation}\label{eq:ConditionsB}
                \begin{cases}
                    PA_{\mathcal{G}^+}(\lambda)\cap A^*=\emptyset\\\text{ and }\\ AN_{\mathcal{G^*}}(PA_{\mathcal{G}^*}(\lambda)_{[s_0-q,s_0-1]})\cap AN_{\mathcal{G^*}}(PA_{\mathcal{G}^*}(A^*)_{[s_0-q,s_0-1]})=\emptyset
                \end{cases},
            \end{equation}
            which are precisely the conditions in Equation \eqref{eq:ConsequenceA} for $A^*$ instead than for $C^*$. Identical reasoning as in (a) leads to $A^*\indep C^*|B$.
        \end{itemize}
    \end{itemize}
    Therefore, $n+1\in\mathcal{A}$, what means $\mathcal{A}=\mathbb{N}$. This concludes the proof of  the case without instantaneous effects (Theorem \ref{thm:GMPWithout})

    We now will extend this proof to account for the possibility of instantaneous effects. We realize that only three aspects of the proof of Theorem \ref{thm:GMPWithout} need altering in the case of instantaneous effects. First is the case $n=2$ in the induction proof. As $A^*$ and $C^*$ have no common ancestors in $\mathcal{G}_I$, Lemma \ref{lemma:Generalization} implies $A^*\indep C^*$. 
    
    Second is the proof of Equation \eqref{eq:IndependenceInA}. In the representation of the process with instantaneous effects as given in Definition \ref{def:VARWith}, innovations are independent (both contemporaneous and not). Therefore, after conditioning on $PA_{\mathcal{G}_I^*}(\lambda)$, the only randomness left in $\lambda$ comes from $\varepsilon^\lambda$. Thus, the only way $\lambda$ could not be independent of a node in $\Delta=(C^*\cup B\cup A^*)\backslash (\{\lambda\}\cup PA_{\mathcal{G}_I^*}(\lambda))$ given $PA_{\mathcal{G}_I^*}(\lambda)$ is if $\varepsilon^\lambda$ appeared in an structural equation of a node of $\Delta$ when iterating these equations over $\Delta$. However, this never happens as $\lambda$ is a sink node in $\mathcal{G}_I^*$ (it appears in no structural equation of nodes of $\mathcal{G}_I^*$ except its own). Therefore, Equation \eqref{eq:IndependenceInA} still holds in the case with instantaneous effects. 

    Lastly, the other independence statement which needs revisiting is Equation \eqref{eq:IndependenceInB}. Note how the conditions in \eqref{eq:ConditionsA} still imply \eqref{eq:ConsequenceA} even with instantaneous effects. Then, Equation \eqref{eq:ConsequenceA} together with Lemma \ref{lemma:Generalization} imply that $PA_{\mathcal{G}_I^*}(\lambda)_{[s_0-q,s_0-1]}$ and $PA_{\mathcal{G}_I^*}(C^*)_{[s_0-q,s_0-1]}$ are independent. Therefore, Equation \eqref{eq:IndependenceInB} still holds in the context of instantaneous effects. This concludes the proof, as any other step in the proof of Theorem \ref{thm:GMPWithout} generalizes directly to the case with instantaneous effects.  
    
\end{proof} 
\thmGmpVarmaWith*
\begin{proof}
    Take the VARMA($p,q$) process with instantaneous effects: 
    $$(I):\quad S_t=A_0S_t+A_1S_{t-1}+\ldots +A_pS_{t-p}+\varepsilon_t+B_1\varepsilon_{t-1}+\ldots B_q\varepsilon_{t-q},$$ where we assume, w.l.o.g., that $A_0$ is triangular inferior. We can rewrite this process as a $2d$-dimensional VAR($l$) process, with $l=\max(p,q)$ as:
    $$(II):\quad Q_t=\begin{pmatrix}  S_t \\ \varepsilon_t  \end{pmatrix} =
    \underbrace{\begin{pmatrix} A_0 & 0 \\ 0 & 0 \end{pmatrix}}_{C_0} Q_t + 
    \underbrace{\begin{pmatrix} A_1 & B_1 \\ 0 & 0 \end{pmatrix}}_{C_1} Q_{t-1}+\cdots+\underbrace{\begin{pmatrix}      A_l & B_l \\ 0 & 0 \end{pmatrix}}_{C_l} Q_{t-l} + \eta_t, $$
    under the convention that $A_i=0$ for $i>p$ and $B_i=0$ for $i>q$. Furthermore, $\eta_t$ is an i.i.d. process with jointly independent components such that:
    $$\begin{cases}
        \mathbb{P}(\eta_t^i=0)=1 \text{ for all }i\in\{1,\ldots,d\}\\
        \eta_t^i\overset{d}{=}\varepsilon_t^{i-d} \text{ for all }i\in\{d+1,\ldots,2d\}
    \end{cases}.$$
    In this second equation we understand the distribution of $\varepsilon_t$ as the one in the definition of the VARMA process $(I)$. Therefore, $\eta_t$ is an i.i.d. process with finite second moments and jointly independent component. Furthermore, thanks to the conditions in the definition of the VARMA process $(I)$, it holds that:
    \begin{align*}
        \det ((I_{2d}-C_0)\lambda^l-C_1\lambda^{l-1}-\ldots -C_l)&=\lambda^{ld}\det ((I_d-A_0)\lambda^l-A_1\lambda^{l-1}-\ldots -A_l)\\&=0\Rightarrow |\lambda|<1.
    \end{align*}
    We conclude  that the VAR process $(II)$ satisfies the  conditions of Theorem \ref{thm:GMPWith}, meaning the global Markov property holds. Two facts must be highlighted at this point. First, the stationary distribution of the VARMA process $(I)$ and its innovations is  the stationary  distribution of $Q$: $\mathbb{P}_{Q}=\mathbb{P}_{(S,\varepsilon)}$. Second, the full-time DAG of the VAR process $\mathcal{G}^{(II)}$ is  the (not marginalized) full-time DAG of the VARMA process $\mathcal{G}^{(I)}_{full}$ (we add the superscript $(I)$ for clarity). 
    
    Consider then  $A,B,C$ finite pairwise disjoint sets of nodes of the full-time marginalized ADMG of the VARMA process $\mathcal{G}^{(I)}_{S}$  such that $A$ and $C$ are $d$-separated by $B$ in $\mathcal{G}^{(I)}_{S}$. By Proposition \ref{prop:SeparationEquivLatentProjection}, these nodes are also separated in the full-time DAG of the process $\mathcal{G}^{(I)}_{full}$.  Attending to the previous remarks, $A$ and $C$ are $d$-separated by $B$ in the full-time DAG  $\mathcal{G}^{(II)}$ and thanks to the global Markov property  (Theorem \ref{thm:GMPWith}), $A\indep C|B$. This concludes the proof. 
\end{proof}

\thmFaithfulnessVARWith*
\begin{proof}
    We follow the ideas of \cite[Thm. 3.2]{spirtes2001causation}. Firstly, note that $0\in\mathcal{D}$, and due to the continuity of the solutions of an algebraic equation w.r.t. its coefficients, $0\in\overset{\circ}{\mathcal{D}}$, meaning $\overset{\circ}{\mathcal{D}}$ is open and non-empty. Furthermore, given $A,B,C$ finite pairwise disjoint subsets of nodes  of the full-time DAG of the process, violation of $d$-separation is equivalent to a node  $a\in A$ not being $d$-separated from a node $c\in C$ given $B$ in $\mathcal{G}_{full}$. Thus, it is sufficient to consider single nodes $X,Y$ and $B$ a finite set of nodes disjoint of $\{X,Y\}$. 

    The first step is to compute the stationary distribution of $(X,Y)$ conditional on $B$. To do so we use the distribution-equivalent representation of the process without instantaneous effects: 
    \begin{equation}\label{eq:VarmaWithEquivalentWithout}
        S_t=CA_1S_{t-1}+\ldots + CA_pS_{t-p}+\delta_t,
    \end{equation}
    where $C=(I_d-A_0)^{-1}$ and $\delta_t\overset{\text{i.i.d.}}{\sim}\mathcal{N}(0,C\Gamma C^\top)$. 
    The stationarity and additive noise  of this equivalent representations allow us to compute conditional MA($\infty$)  representations of the form:
    \begin{equation}\label{eq:ConditionalMA}
         X|B\overset{d}{=}\gamma_X^\top B + \sum_{t\in\mathbb{Z}}\sum_{i=1}^d  \alpha_t^i\delta_t^i,\qquad        Y|B\overset{d}{=}\gamma_Y^\top B + \sum_{t\in\mathbb{Z}}\sum_{i=1}^d  \beta_t^i\delta_t^i,
    \end{equation}
    with $\sum_{t\in\mathbb{Z}}\sum_{i=1}^d|\alpha_i|$ and $\sum_{t\in\mathbb{Z}}\sum_{i=1}^d|\beta_i|$ finite. As this representation is obtained  by iterating the structural equations of the process except over the nodes of $B$ in the representation without instantaneous effects, we know that   $\alpha_t^i$ is the sum of the product of the coefficients associated to all directed paths from $S_t^i$ to $X$ which do not intersect $B$ in the full-time  DAG of the process without instantaneous effects (idem for $\beta_i$ and $Y$). Recall from the proof of Proposition \ref{prop:ExtendedDAG}  that the entries of $C$ are polynomials on vech($A_0$). Hence the coefficients of the distribution-equivalent representation without instantaneous effects are polynomials on $(\text{vech}(A_0),\text{vec}(A_1|\ldots|A_p))$, and then so are $\alpha_t^i$ and $\beta_t^i$. 

    Due to the Gaussianity and independence of the innovations, the stationarity of the process, Levy's Theorem, and the representation given in Equation  \eqref{eq:ConditionalMA}, we know that: 
    \begin{equation}\label{eq:StationaryConditionalDistribution}
        (X,Y)|B\sim\mathcal{N}\left(\begin{pmatrix}
        \gamma_X^\top\\ \gamma_Y^\top   \end{pmatrix}B, \sum_{t\in\mathbb{Z}}\sum_{i,j,k=1}^d
        \begin{pmatrix}
              \alpha_t^i\alpha_t^jC_{i,k}C_{j,k}\sigma_k^2 & -\\
             \alpha_t^i\beta_t^jC_{i,k}C_{j,k}\sigma_k^2 &  \beta_t^i\beta_t^jC_{i,k}C_{j,k}\sigma_k^2
        \end{pmatrix}\right).
    \end{equation}
    Thanks to the finite number of variances and non-zero components of $C$, and the absolute addibility of the coefficients of the conditional MA($\infty$) representations, the sums in the variances and covariance in Equation \eqref{eq:StationaryConditionalDistribution} are finite. The normality of $(X,Y)|B$ implies that $$X\indep Y|B  \Leftrightarrow \mathbb{C}\text{ov}(X,Y|B)=0\Leftrightarrow \sum_{t\in\mathbb{Z}}\sum_{i,j,k=1}^d \alpha_t^i\beta_t^jC_{i,k}C_{j,k}\sigma_k^2=0. $$ Thus, a violation of faithfulness happens when $X\not\perp_{\mathcal{G}_{full}}Y|B$ but  $\mathbb{C}\text{ov}(X,Y|B)=0$. Owing to the joint normality and stationarity of the process, \cite[Thm. 3.5]{spirtes2001causation} tells us that $X\perp_{\mathcal{G}_{full}}Y|B$ is equivalent to $\mathbb{C}\text{ov}(X,Y|B)=0$. Hence, if $X\not\perp_{\mathcal{G}_{full}}Y|B$, then $\mathbb{C}\text{ov}(X,Y|B)= \sum_{t\in\mathbb{Z}}\sum_{i,j,k=1}^d \alpha_t^i\beta_t^jC_{i,k}C_{j,k}\sigma_k^2$ is not trivially zero. Consider now the map $f\colon \overset{\circ}{\mathcal{D}}\times (0,\infty)^d \mapsto \sum_{t\in\mathbb{Z}}\sum_{i,j,k=1}^d \alpha_t^i\beta_t^jC_{i,k}C_{j,k}\sigma_k^2$. This map is not trivially zero. The absolute convergence of the covariance $f$ represents implies that $f$ is a real analytic function \cite[Thm 2.2.5]{krantz2002primer}. As $\overset{\circ}{\mathcal{D}}\times (0,\infty)^d\subset\mathbb{R}^{dp^2+d(d-1)/2+d}$ is open and connected and $f$ is real analytic, the set of its zeroes has Lebesgue measure zero (see  \cite{mityagin2015zero}). Therefore, the set of coefficients generating a faithfulness violation has Lebesgue measure zero, and therefore is a $\mathbb{P}$-null set. This concludes the proof.
\end{proof}
\thmFaithfulnessVARMAWith*
\begin{proof}
    For this proof, instead of working in the marginalized full-time ADMG $\mathcal{G}_S$, we will always consider the full-time (not marginalized) DAG of the process $\mathcal{G}_{full}$. Recall that by Proposition \ref{prop:SeparationEquivLatentProjection}, $m$-separation in $\mathcal{G}_S$ is equivalent to $d$-separation in $\mathcal{G}_{full}$ regarding endogenous variables. 

    The first steps are identical to the proof of Theorem \ref{thm:FaithfulnessVARWith}: By the same arguments, $\overset{\circ}{\mathcal{D}}$ is open and non-empty. Furthermore, for our purposes it suffices to consider single endogenous nodes $X,Y$ and a finite set of endogenous nodes $B$ disjoint of $\{X,Y\}$. 

    We begin by computing the stationary distribution of $(X,Y)|B$. To do so, given the VARMA process with instantaneous effects $$S_t=A_0S_t+A_1S_{t-1}+\ldots +A_pS_{t-p}+\varepsilon_t+B_1\varepsilon_{t-1}+\ldots B_q\varepsilon_{t-q},$$ we express it as a distributional-equivalent VARMA process of the same order and dimension without instantaneous effects and correlated innovations: 
    $$S_t=CA_1S_{t-1}+\ldots + CA_pS_{t-p}+\delta_t+CB_1C^{-1}\delta_{t-1}+\ldots +CB_qC^{-1}\delta_{t-q},$$
    using the same notation as in Equation \eqref{eq:VarmaWithEquivalentWithout}. The stationarity and additivity  of the process allow us to compute conditional MA($\infty$) representations analogous to those in Equation \eqref{eq:ConditionalMA}. Thus, the stationary distribution of $(X,Y)|B$ is analogous to the one given in Equation \eqref{eq:StationaryConditionalDistribution}, where the covariances are polynomials on the coefficients of the process $(\text{vech}(A_0),\text{vec}(A_1|\ldots|A_p|B_1|\ldots|B_q),\sigma_1^2,\ldots,\sigma_d^2)$.  Thus, a violation of faithfulness happens when $X\not\perp_{\mathcal{G}_{full}}Y|B$ but  $\mathbb{C}\text{ov}(X,Y|B)=0$. 
    
    The next step of the proof is to reason that $X\not\perp_{\mathcal{G}_{full}}Y|B$ implies that $\mathbb{C}\text{ov}(X,Y|B)$ is not trivially  zero. To be able to apply the results from \cite{spirtes2001causation}, we rewrite the VARMA($p,q$) process with instantaneous effects as a VAR($l$) process with instantaneous effects with $l=\max(p,q)$: 
    $$Q_t=\begin{pmatrix}  S_t \\ \varepsilon_t  \end{pmatrix} =
    \underbrace{\begin{pmatrix} A_0 & 0 \\ 0 & 0 \end{pmatrix}}_{C_0} Q_t + 
    \underbrace{\begin{pmatrix} A_1 & B_1 \\ 0 & 0 \end{pmatrix}}_{C_1} Q_{t-1}+\cdots+\underbrace{\begin{pmatrix}      A_l & B_l \\ 0 & 0 \end{pmatrix}}_{C_l} Q_{t-l} + \eta_t, $$
    under the convention that $A_i=0$ for $i>p$ and $B_i=0$ for $i>q$. Furthermore, $\eta_t$ is an i.i.d. process with jointly independent components such that:
    $$\begin{cases}
        \mathbb{P}(\eta_t^i=0)=1 \text{ for all }i\in\{1,\ldots,d\}\\
        \eta_t^i\overset{d}{=}\varepsilon_t^{i-d} \text{ for all }i\in\{d+1,\ldots,2d\}
    \end{cases}.$$
    In the proof of Theorem \ref{thm:FaithfulnessVARWith} the distribution condition required is that the stationary distribution of the process is multivariate normal. In the case of a VAR process, this is equivalent to the Gaussianity assumption imposed on the innovations of the process, what can be seen by conditioning on the observed parents of a node in the full-time DAG (without innovations). However, this VAR process does not have independent and normally distributed innovations. Yet, the stationary distribution of the process is indeed Gaussian, and the innovations are jointly independent. This suffices to apply Theorem 3.5 in \cite{spirtes2001causation}, meaning that if $X\not\perp_{\mathcal{G}_{full}}Y|B$ then $\mathbb{C}\text{ov}(X,Y|B)$ does not trivially vanish in $\mathbb{P}^{Q}$; but as $\mathbb{P}^Q_S=\mathbb{P}^S$, we conclude that   if $X\not\perp_{\mathcal{G}_{full}}Y|B$ then $\mathbb{C}\text{ov}(X,Y|B)$ does not trivially vanish in $\mathbb{P}^{S}$. The rest of the proof is analogous to the proof of Theorem \ref{thm:FaithfulnessVARWith}, as this non-zero covariance is an analytic function on the coefficients of the process. This concludes the proof.
\end{proof}

\begin{restatable}{lemma}{lemmaTranslationedgesVarmaWith}\label{lemma:TranslationedgesVarmaWith}
    Let $S$ be a VARMA($p,q$) process with instantaneous effects as per Definition \ref{def:VARMAWith}. Let $\mathcal{G}_I$ be its full-time DAG, and $\mathcal{G}_{I,S}$ be its marginalized full-time ADMG over the endogenous variables. Consider the distribution-equivalent process without instantaneous effects, and denote by $\mathcal{G}_{II}$ its full-time DAG and by $\mathcal{G}_{II,S}$ its marginalized full-time ADMG. Let $A$ be a finite set of nodes in the marginalized ADMG (meaning a finite collection of endogenous variables), and let $\varepsilon$ be an innovation of the process at a certain time point. Then, if the edge $\varepsilon\to AN_{\mathcal{G}_{II,S}}(A)$ exists in $\mathcal{G}_{II}$, then the edge $\varepsilon\to AN_{\mathcal{G}_{I,S}}(A)$ exists in $\mathcal{G}_I$.
\end{restatable}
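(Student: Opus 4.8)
The plan is to read the statement as follows: if some endogenous node $w \in AN_{\mathcal{G}_{II,S}}(A)$ receives a directed edge $\varepsilon \to w$ in $\mathcal{G}_{II}$, then some endogenous node $w' \in AN_{\mathcal{G}_{I,S}}(A)$ receives a directed edge $\varepsilon \to w'$ in $\mathcal{G}_I$. Two preliminary observations drive the argument. First, the distribution-equivalent process without instantaneous effects keeps the \emph{same} innovations $\varepsilon$ (as in Equation \eqref{eq:ExampleVarmaDAGWithEquiv} of Example \ref{example:GMPVarmaWith}), so the innovation nodes of $\mathcal{G}_I$ and $\mathcal{G}_{II}$ coincide and ``the edge from $\varepsilon$'' is unambiguous. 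Second, by the extension of Corollary \ref{coro:Ancestors} to the marginalized ADMGs noted in Example \ref{example:GMPVarmaWith}, together with the fact that marginalization leaves ancestral sets of endogenous nodes unchanged, we have $AN_{\mathcal{G}_{II,S}}(A) \subseteq AN_{\mathcal{G}_{I,S}}(A)$, and both coincide with the endogenous ancestors of $A$ in the corresponding full-time DAG.

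Next I would make the innovation edges of $\mathcal{G}_{II}$ explicit. Writing $C = (I_d - A_0)^{-1}$, the process without instantaneous effects reads $S_t = CA_1 S_{t-1} + \cdots + CA_p S_{t-p} + C\varepsilon_t + CB_1\varepsilon_{t-1} + \cdots + CB_q\varepsilon_{t-q}$, whence $\varepsilon_t^k \to S_t^i$ exists in $\mathcal{G}_{II}$ iff $C_{i,k} \neq 0$ and $\varepsilon_{t-l}^j \to S_t^i$ exists iff $(CB_l)_{i,j} \neq 0$. By Equation \eqref{eq:InverseMatrix} (from the proof of Proposition \ref{prop:ExtendedDAG}), $C_{i,k} \neq 0$ holds exactly when $S_t^k$ is an instantaneous ancestor of $S_t^i$ in $\mathcal{G}_I$, the case $k = i$ being covered by $C_{i,i} = 1$ and the convention that a node is its own ancestor.

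Then I would split on the shape of $\varepsilon$. If $\varepsilon = \varepsilon_t^k$ is contemporaneous with $w = S_t^i$, then $C_{i,k} \neq 0$ forces $S_t^k \in AN_{\mathcal{G}_I}(S_t^i)$; since $\varepsilon_t^k \to S_t^k$ is always present in $\mathcal{G}_I$ and $S_t^i \in AN_{\mathcal{G}_{I,S}}(A)$, transitivity of ancestry yields $S_t^k \in AN_{\mathcal{G}_{I,S}}(A)$, so $w' = S_t^k$ works. If $\varepsilon = \varepsilon_{t-l}^j$ is lagged with $w = S_t^i$, then $(CB_l)_{i,j} = \sum_m C_{i,m}(B_l)_{m,j} \neq 0$ provides an index $m$ with $C_{i,m} \neq 0$ and $(B_l)_{m,j} \neq 0$; the former gives $S_t^m \in AN_{\mathcal{G}_I}(S_t^i) \subseteq AN_{\mathcal{G}_{I,S}}(A)$, while the latter gives the edge $\varepsilon_{t-l}^j \to S_t^m$ in $\mathcal{G}_I$, so $w' = S_t^m$ works.

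The argument is essentially bookkeeping, and I do not expect a genuine obstacle. The one point requiring care is the lagged case, where one must pass through the witnessing intermediate index $m$ of $(CB_l)_{i,j} \neq 0$ and verify that $S_t^m$ is indeed an instantaneous ancestor of $S_t^i$; everything then reduces to the algebraic reading of the entries of $C$ and $CB_l$ via Equation \eqref{eq:InverseMatrix}, combined with transitivity of ancestry in $\mathcal{G}_I$.
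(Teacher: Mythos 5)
Your proof is correct and, at its core, takes the same route as the paper's: both arguments combine $AN_{\mathcal{G}_{II,S}}(A)\subseteq AN_{\mathcal{G}_{I,S}}(A)$ (Corollary \ref{coro:Ancestors}) with the fact that any innovation edge $\varepsilon\to Z$ in $\mathcal{G}_{II}$ originates from an innovation edge in $\mathcal{G}_I$ into an instantaneous endogenous ancestor of $Z$, and then conclude by transitivity of endogenous ancestry. The difference is one of presentation: the paper asserts the key step structurally (``due to the way $\mathcal{G}_{II}$ is constructed from $\mathcal{G}_I$''), while you derive it by expanding the coefficient matrices $C$ and $CB_l$ and reading off their non-zero entries via Equation \eqref{eq:InverseMatrix}, split into the contemporaneous and lagged cases; this makes explicit precisely what the paper leaves implicit, which is a virtue rather than a defect. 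One small imprecision: $C_{i,k}\neq 0$ holds \emph{only if} $S_t^k$ is an instantaneous ancestor of $S_t^i$ in $\mathcal{G}_I$, not ``exactly when'' --- cancellation among path coefficients can make $C_{i,k}=0$ even when such ancestry holds. Since your argument only uses the implication from $C_{i,k}\neq 0$ (respectively $(CB_l)_{i,j}\neq 0$) to the existence of the relevant ancestor and edge in $\mathcal{G}_I$, nothing breaks, but the stated equivalence should be weakened to that single direction.
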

\begin{proof}
    Firstly note that Corollary \ref{coro:Ancestors} still applies to the full-time DAGs of these VARMA processes, and hence $AN_{\mathcal{G}_{II,S}}(A)\subset AN_{\mathcal{G}_{I,S}}(A)$.  Now, let $Z\in AN_{\mathcal{G}_{II,S}}(A)$ be such that $\varepsilon\to Z$ exists in $\mathcal{G}_{II}$. If this edge exists in $\mathcal{G}_{I}$, then the proof is completed. Otherwise, due to the way $\mathcal{G}_{II}$ is constructed from $\mathcal{G}_{I}$, we know that  $\varepsilon$ has an edge into an instantaneous endogenous ancestor of $Z$. This means that in $\mathcal{G}_{I}$ we find the structure: $$\varepsilon\to \underbrace{W\to\dots\to}_{\text{all nodes and edges in }\mathcal{G}_{I,S}} Z \in AN_{\mathcal{G}_{I,S}}(A),$$ but then in $\mathcal{G}_{I}$ we have $\varepsilon\to W\in AN_{\mathcal{G}_{I,S}}(A)$, as we wanted to show.
\end{proof}

\begin{restatable}{proposition}{propDisjointMArepDirectVarmaWith}\label{prop:DisjointMArepDirectVarmaWith}
    Let $S$ be a VARMA($p,q$) process with instantaneous effects as per Definition \ref{def:VARMAWith}. Let $\mathcal{G}_I$ be its full-time DAG, and $\mathcal{G}_{I,S}$ be its  full-time marginalized ADMG over the endogenous variables. Consider  $A$ a finite set of nodes in the marginalized ADMG $\mathcal{G}_{I,S}$, and $\lambda$ a node in this ADMG not in $A$. Then, if $SP_{\mathcal{G}_{I,S}}(\lambda)\cap AN_{\mathcal{G}_{I,S}}(A)=\emptyset$,  no innovation which enters the structural equation of $\lambda$ directly (through the MA part of the process) can appear in the MA($\infty$) representation of $A$.
\end{restatable}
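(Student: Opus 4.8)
The plan is to argue by contradiction, reducing the claim to a combinatorial fact about how the latent projection converts shared innovation-parents into bi-directed edges. First I would unwind the terminology. An innovation $\varepsilon$ ``enters the structural equation of $\lambda$ directly through the MA part'' precisely when $\varepsilon \to \lambda$ is an edge of $\mathcal{G}_I$ (for $\lambda = S_t^i$ these are $\varepsilon_t^i$ together with those $\varepsilon_{t-l}^j$ for which $(B_l)_{i,j} \neq 0$). An innovation can appear in the MA($\infty$) representation of $A$ only if there is a directed path from it to some node of $A$ in $\mathcal{G}_I$; so it suffices to show that no innovation-parent of $\lambda$ lies in $AN_{\mathcal{G}_I}(A)$, since the absence of a directed path forces a vanishing MA coefficient.

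The key structural observation, which I expect to carry the whole argument, is a description of the latent projection of the VARMA full-time DAG. In $\mathcal{G}_I$ the innovation nodes are source nodes: they have no incoming edges and point only to endogenous nodes. Feeding this into Definition \ref{def:LatentProjection} yields two consequences. First, no directed path between endogenous nodes can route through a latent (innovation) node, so the directed edges among endogenous nodes, and hence the endogenous ancestor relations, coincide in $\mathcal{G}_I$ and $\mathcal{G}_{I,S}$. Second, a bi-directed edge $v \leftrightarrow w$ of $\mathcal{G}_{I,S}$ can arise only from a latent common ancestor $U$ whose directed paths to $v$ and $w$ have all intermediate nodes latent; since innovations are sources, those paths collapse to the single edges $U \to v$ and $U \to w$. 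Thus, for $w \neq \lambda$, we have $w \in SP_{\mathcal{G}_{I,S}}(\lambda)$ if and only if some innovation is a common parent of $\lambda$ and $w$ in $\mathcal{G}_I$.

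With this in hand the contradiction is immediate. Suppose some innovation $\varepsilon$ with $\varepsilon \to \lambda$ also lies in $AN_{\mathcal{G}_I}(A)$, and fix a directed path $\varepsilon \to Z \to \cdots \to a$ with $a \in A$. Its first endogenous node $Z$ exists because innovations point only to endogenous nodes, and the remainder of the path stays among endogenous nodes because innovations are sources; hence $Z$ is an endogenous ancestor of $A$, so $Z \in AN_{\mathcal{G}_{I,S}}(A)$. If $Z \neq \lambda$, then $\varepsilon$ is a common innovation-parent of $\lambda$ and $Z$, so $Z \in SP_{\mathcal{G}_{I,S}}(\lambda)$. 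If $Z = \lambda$, the self-spouse convention gives $\lambda \in SP_{\mathcal{G}_{I,S}}(\lambda)$, while the path shows $\lambda \in AN_{\mathcal{G}_{I,S}}(A)$. In either case $SP_{\mathcal{G}_{I,S}}(\lambda) \cap AN_{\mathcal{G}_{I,S}}(A) \neq \emptyset$, contradicting the hypothesis. Therefore no innovation-parent of $\lambda$ is an ancestor of $A$, and none can appear in the MA($\infty$) representation of $A$.

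The main obstacle is the middle step: one must recognise that the special structure of the VARMA full-time DAG, with innovations as source nodes, collapses the general latent-projection definition to the clean ``shared innovation-parent'' characterisation of spouses, and that the same structure preserves endogenous ancestor relations under marginalisation. One must also remember to handle the degenerate case $Z = \lambda$ through the conventions that a node is both its own spouse and its own ancestor; everything else is routine bookkeeping with directed paths.
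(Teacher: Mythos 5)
Your proof is correct, but it takes a genuinely different route from the paper's. The paper proves this proposition by detouring through the distribution-equivalent process \emph{without} instantaneous effects: it observes that if $\varepsilon$ appears in the MA($\infty$) representation of $A$, then $\varepsilon$ has an edge into $AN_{\mathcal{G}_{II,S}}(A)$ in the full-time DAG $\mathcal{G}_{II}$ of that equivalent process, and then invokes a dedicated auxiliary result (Lemma \ref{lemma:TranslationedgesVarmaWith}) to translate that edge back into an edge $\varepsilon\to AN_{\mathcal{G}_{I,S}}(A)$ in $\mathcal{G}_I$, after which the two edges out of $\varepsilon$ marginalize to the contradictory bi-directed edge. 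You instead stay entirely inside $\mathcal{G}_I$: you note that appearance in the MA($\infty$) representation already forces a directed path from $\varepsilon$ to $A$ in $\mathcal{G}_I$ (a path-sum with no paths is zero), and you exploit the fact that innovations are source nodes to collapse the latent-projection definition into the clean characterization ``$v\leftrightarrow w$ in $\mathcal{G}_{I,S}$ iff $v$ and $w$ share an innovation-parent in $\mathcal{G}_I$,'' together with preservation of endogenous ancestor relations under projection. The endgame (shared innovation-parent of $\lambda$ and an endogenous ancestor of $A$ yields a spouse in $AN_{\mathcal{G}_{I,S}}(A)$, contradicting the hypothesis) coincides with the paper's, but your middle section removes the need for $\mathcal{G}_{II}$ and the translation lemma altogether, making the argument self-contained; the trade-off is that you implicitly rely on the equivalence of the MA($\infty$) coefficients computed by iterating the representation with instantaneous effects versus without (which holds, since solving out the nilpotent instantaneous part is exactly the substitution producing the equivalent process, but is worth a sentence). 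You also handle the degenerate case $Z=\lambda$ explicitly via the self-spouse and self-ancestor conventions, a case the paper's phrasing silently absorbs.
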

\begin{proof}
    Assume otherwise. Then there is an innovation $\varepsilon$ such that $\varepsilon\to\lambda$ exists in $\mathcal{G}_I$. Furthermore, if we denote by $\mathcal{G}_{II}$ the full-time DAG and by $\mathcal{G}_{II,S}$ the marginalized full-time ADMG of the distribution-equivalent process without instantaneous effects, we also have the edge $\varepsilon\to AN_{\mathcal{G}_{II,S}}(A)$ in $\mathcal{G}_{II}$. Due to Lemma \ref{lemma:TranslationedgesVarmaWith}, we know that in $\mathcal{G}_I$ we have the edge $\varepsilon\to AN_{\mathcal{G}_{I,S}}(A)$. These two structures in $\mathcal{G}_I$ marginalize to the bi-directed edge $\lambda\leftrightarrow AN_{\mathcal{G}_{I,S}}(A)$ in $\mathcal{G}_{I,S}$, what is a contradiction. This concludes the proof.
\end{proof}

\thmIVregressionVARMA*
\begin{proof}
     This proof follows the ideas of \cite[Thm. 5]{thams2022identifying}. We begin by proving \textit{(i)}. Due to the linearity and additive noise of the VARMA process, we can write the structural equation of $Y$ as a linear combination of its parents plus a linear combination of innovations: $$Y=\underbrace{\xi^\top PA_{\mathcal{G}_S}(Y)}_{ \text{AR part}}+\underbrace{\theta^{\top}\varepsilon^Y}_{\text{MA part}},$$ where $\varepsilon^Y$ is a vector of the innovations which enter the structural equation of $Y$ through its MA part, and $\xi,\theta$ are real vectors of the appropriate dimensions. We can recursively iterate the structural equations of the parents of $Y$ without replacing the variables in $\mathcal{X}\cup\mathcal{B}\cup ND_{\mathcal{G}_S}(\mathcal{X})$ until we reach for the first time a decomposition of the form $Y=\beta\mathcal{X}+\pi\mathcal{B}+\gamma R+\varepsilon$, where $R$ are all the variables which appear when iterating the structural equations of the parents of $Y$, which are not in $\mathcal{B}$ nor in any causal path from $\mathcal{X}$ to $Y$, meaning $R\cap (\mathcal{X}\cup\mathcal{B}\cup DE_{\mathcal{G}_S}(\mathcal{X}))=\emptyset$ (note how $R$ might contain descendants of $\mathcal{B}$), and $\varepsilon$ is the linear combination of accumulated innovations from the MA part of the structural equation of $Y$ and its iterated ancestors. Also note how the vector of coefficients in front of $\mathcal{X}$ is  the total causal effect $\beta$, as following Definition \ref{def:TotalCausalEffect} it is the sum of the products of the path coefficients along all directed paths from $\mathcal{X}$ to $Y$ which do not cross $\mathcal{X}$ (causal paths) in the full-time DAG of the process, which are not blocked by $\mathcal{B}$ by assumption \textit{(2)}. In order to prove \textit{(i)} we make two claims:
     \begin{itemize}
         \item[(a)] Any path from $\mathcal{I}$ to $R$ in $\mathcal{G}_S$ is blocked by $\mathcal{B}$, i.e. $\mathcal{I}\perp_{\mathcal{G}_S}R|\mathcal{B}$,
         \item[(b)] $\mathbb{E}[\mathbb{C}\text{ov}(\varepsilon,\mathcal{I}|\mathcal{B})]=0$.
     \end{itemize}
     Assuming  both of these statements, from (a) and the global Markov property for VARMA processes with instantaneous effects (Theorem \ref{thm:GmpVarmaWith}) we have that $\mathcal{I}\indep R|\mathcal{B}$, and hence $\mathbb{E}[\mathbb{C}\text{ov}(R,\mathcal{I}|\mathcal{B})]=0$. Thus, taking (b) into account and that trivially $\mathbb{E}[\mathbb{C}\text{ov}(\mathcal{B},\mathcal{I}|\mathcal{B})]=0$, we conclude that $\mathbb{E}[\mathbb{C}\text{ov}(Y-\beta\mathcal{X},\mathcal{I}|\mathcal{B})]=\mathbb{E}[\mathbb{C}\text{ov}(\pi\mathcal{B}+\gamma R+\varepsilon,\mathcal{I}|\mathcal{B})]=0$ as we wanted to show.

     To prove (a) we suppose that there exists $i\in\mathcal{I}, m\in R$ and a path $\mathfrak{p}:i-v_1-\cdots -v_n-m$ in $\mathcal{G}_S$ which is not blocked by $\mathcal{B}$ (where $v-w$ means there is an edge, directed or bi-directed, between $v$ and $w$ in $\mathcal{G}_S$). Therefore, all non-colliders in $\mathfrak{p}$ are not in $\mathcal{B}$ and all colliders in $\mathfrak{p}$ have a descendant in $\mathcal{B}$ in $\mathcal{G}_S$. To find a contradiction, we will construct a path in $\mathcal{G}_S$ from $\mathcal{I}$  to $Y$ violating assumption \textit{(1)}. To do so, we begin by noticing that: 
     \begin{itemize}
         \item $m\neq Y$ as it was found among its ancestors,
         \item $m\notin \mathcal{B}\cup\mathcal{X}$ by construction of $R$,
         \item $m\notin\mathcal{I}$: By construction of $R$, there is a directed path from $m$ to $Y$ not intersecting $\mathcal{B}\cup\mathcal{X}$ in $\mathcal{G}_S$. If $m\in\mathcal{I}$, this path would violate assumption \textit{(1)}.
     \end{itemize}
     Now concatenate the path $\mathfrak{p}$ from $i\in\mathcal{I}$ to $m\in R$ with the descendant path from $m\in R$ to $Y$. Denote this path by $\mathfrak{q}$. Note how both of the concatenated paths are not blocked by $\mathcal{B}$ (as the descendant path does not intersect $\mathcal{B}\cup\mathcal{X}$) and $m$ is not a collider in $\mathfrak{q}$. Thus, $\mathfrak{q}$  is not blocked by $\mathcal{B}$ in $\mathcal{G}_S$.


     Next we will show that this path is still not blocked by $\mathcal{B}$ when we remove all outgoing edges from $\mathcal{X}$ on a causal path from $\mathcal{X}$ to $Y$. This owes to the fact that $\mathfrak{q}$ contains none of these edges. Assume that $\mathfrak{q}$ contains one of these edges. As the descendant path from $m$ to $Y$ does not intersect $\mathcal{X}$ and $m\notin\mathcal{X}$ this edge has to be in $\mathfrak{p}$.  As $\mathcal{X}\cap\mathcal{I}=\emptyset$, there has to be $k\in\{1,\ldots,n\}$ such that $v_k\in\mathcal{X}$ and $\mathfrak{p}$ contains the edge $v_k\to$ which lies on a causal path from $\mathcal{X}$ to $Y$. Take the smallest of such $k$. There are two possibilities: 
     \begin{itemize}
         \item[$(\alpha)$] This edge points towards $m$, meaning we have $v_k\to v_{k+1}$. As by \textit{(2)} $\mathcal{B}$ is not a descendant of $\mathcal{X}$, $v_{k+1}$ is a non-collider in $\mathfrak{p}$, meaning in $\mathfrak{p}$ we find the structure $v_k\to v_{k+1}\to v_{k+2}$. Repeating this argument a large but finite number of times, we conclude that in $\mathfrak{p}$ we find the directed path $v_k\to v_{k+1}\to\cdots\to v_n\to m$, which is a contradiction with $R\cap DE_{\mathcal{G}_S}(\mathcal{X})=\emptyset$.
         \item[$(\delta)$] This edge points towards $i$. By the same argument as before we find in $\mathfrak{p}$ the structure $i\leftarrow v_1\leftarrow\cdots\leftarrow v_k$. The causal path from $\mathcal{X}$ to $Y$ which starts with the edge $v_k\to v_{k-1}$ (assume the notation $v_0=i$) can behave in two ways w.r.t. $\mathfrak{p}$:
         \begin{itemize}
             \item[$\ast$] $Y\notin\{v_1,\ldots,v_{k-1}\}$. Then for the segment of $\mathfrak{p}$  $v_k\to v_{k-1}\to\cdots\to v_1\to i=v_0$  there exist $l\in\{0,\ldots,k-1\}$ such that we find the path $v_l\to\cdots\to Y$ in $\mathcal{G}_S$, and the path $v_k\to\cdots\to v_l\to\cdots\to Y$ is causal. Take the smallest of such $l$'s (this allows us to assume that the path $v_l\to\cdots\to Y$ does not intersect the former segment of $\mathfrak{p}$  except at $v_l$. Now, the segment $v_{l}\to\cdots\to v_1\to i$ (if it exists) does not contain any edges outgoing $\mathcal{X}$ on a causal path from $\mathcal{X}$ to $Y$, by the choice of $k$. Neither does the path $v_l\to\cdots\to Y$, as the path $v_k\to\cdots\to v_l\to\cdots\to Y$ is causal. Therefore, the path $i\leftarrow\cdots\leftarrow v_l\to\cdots\to Y$ does not contain any edges outgoing $\mathcal{X}$ on a causal path from $\mathcal{X}$ to $Y$, and it does not intersect $\mathcal{B}$ by assumption \textit{(2)}. Thus, this path violates assumption \textit{(1)}. 
             \item[$\ast$]  $Y\in\{v_1,\ldots,v_{k-1}\}$. Then in $\mathfrak{p}$ we find the structure $v_k\to\cdots\to Y\to\cdots\to i$. The sub-path $Y\to\cdots\to i$ does not contain any edges outgoing $\mathcal{X}$ on a causal path from $\mathcal{X}$ to $Y$, as otherwise $\mathcal{G}_S$ would have a cycle. Furthermore, it contains only non-colliders not in $\mathcal{B}$, violating assumption \textit{(1)}.
         \end{itemize}
     \end{itemize}
     We can then conclude that $\mathfrak{q}$ is open given $\mathcal{B}$ and contains no edges outgoing $\mathcal{X}$ on a causal path from $\mathcal{X}$ to $Y$. This again constitutes a contradiction with assumption \textit{(1)}, what proves (a).

     Next we show that (b) also holds, namely that $\mathbb{E}[\mathbb{C}\text{ov}(\varepsilon,\mathcal{I}|\mathcal{B})]=0$. Recall  that $\varepsilon$ is a linear combination of the innovations which enter the structural equation of $Y$ and the iterated ancestors through the MA part of the process. Let $\varepsilon^i$ be one of these innovations which appear in $\varepsilon$. Then there is $A^i$ an ancestor of $Y$ (maybe $Y$ itself) such that $\varepsilon^i$ enters its structural equation directly, through its MA part. We will show that $\varepsilon^i$ does not appear in the MA($\infty$) representations of $\mathcal{B}$ and $\mathcal{I}$. By Proposition \ref{prop:DisjointMArepDirectVarmaWith} it suffices to show that  $SP_{\mathcal{G}_S}(A^i)\cap AN_{\mathcal{G}_S}(\mathcal{B})=\emptyset$ and $SP_{\mathcal{G}_S}(A^i)\cap AN_{\mathcal{G}_S}(\mathcal{I})=\emptyset$.
     \begin{itemize}
         \item For $\mathcal{B}$, recall that by construction there is a directed path from $\mathcal{X}$ to $Y$  going through $A^i$ in the full-time DAG, and hence also in $\mathcal{G}_S$ (if $A^i$ were not a descendant of $\mathcal{X}$  it would not be iterated over), which only intersects $\mathcal{X}\cup\mathcal{B}$ at the initial node (therefore this path is causal). Thus, $A^i$ is a descendant of $\mathcal{X}$, and by \textit{(2)} we conclude that  $SP_{\mathcal{G}_S}(A^i)\cap AN_{\mathcal{G}_S}(\mathcal{B})=\emptyset$.
         \item For $\mathcal{I}$, assume the opposite: there exists $\omega\in SP_{\mathcal{G}_S}(A^i)\cap AN_{\mathcal{G}_S}(\mathcal{I})$. Then we can consider the path $\mathfrak{r}: \mathcal{I}\leftarrow\cdots\leftarrow\omega\leftrightarrow A^i \to \cdots\to Y$. We know  that no node in the descendant path $A^i \to \cdots\to Y$ is in $\mathcal{B}$. Furthermore, no node in the descendant path $\omega\to\cdots\to\mathcal{I}$ intersects $\mathcal{B}$, as otherwise $\omega\in SP_{\mathcal{G}_S}(A^i)\cap AN_{\mathcal{G}_S}(\mathcal{B})$ which we know is an empty intersection. Thus, $\mathfrak{r}$ is an open path in $\mathcal{G}_S$ given $\mathcal{B}$. Furthermore, $\mathfrak{r}$ contains no edges outgoing $\mathcal{X}$ on a causal path from $\mathcal{X}$ to $Y$. Indeed, by construction $A^i \to \cdots\to Y$ does not intersect $\mathcal{X}$. For the side $\omega\to\cdots\to\mathcal{I}$ we reason analogously to item $(\delta)$ before, concluding that this side neither contains an edge outgoing $\mathcal{X}$ on a causal path from $\mathcal{X}$ to $Y$. Hence $\mathfrak{r}$ violates assumption \textit{(1)}, meaning that $SP_{\mathcal{G}_S}(A^i)\cap AN_{\mathcal{G}_S}(\mathcal{I})=\emptyset$, as we wanted to show.
     \end{itemize}
     This means that $\varepsilon^i$ does not enter the MA($\infty$) representation of $\mathcal{B}$ or $\mathcal{I}$, hence $\varepsilon^i\indep(\mathcal{B,I})$. As this holds for all the $\varepsilon^i$'s which enter $\varepsilon$ as a linear combination, $\varepsilon\indep(\mathcal{B,I})$, and therefore $\mathbb{E}[\mathbb{C}\text{ov}(\varepsilon,\mathcal{I}|\mathcal{B})]=0$, as we wanted to show. This concludes the proof of \textit{(i)}.

     The proofs of \textit{(ii)} and \textit{(iii)} follow almost identically the arguments given in \cite[Thm. 5]{thams2022identifying}. Part \textit{(ii)} is due to the fact that if $\mathbb{E}[\mathbb{C}\text{ov}(\mathcal{X},\mathcal{I}|\mathcal{B})]$ has full row rank, then if a solution to the moment equation $\mathbb{E}[\mathbb{C}\text{ov}(Y,\mathcal{I}|\mathcal{B})]=\beta\mathbb{E}[\mathbb{C}\text{ov}(\mathcal{X},\mathcal{I}|\mathcal{B})]$ exists, it is unique.

     For statement \textit{(iii)}, if $\mathcal{X,I,B}$ and $Y$ are assumed to have zero mean then we can center \textbf{X, I, B, Y} such that \cite[Eq. (5)]{thams2022identifying}: $$\hat{\beta}=\widehat{\mathbb{E}}[r_{\textbf{Y}}r_{\textbf{I}}^{\top}]W\widehat{\mathbb{E}}[r_{\textbf{I}}r_{\textbf{X}}^{\top}]\left(\widehat{\mathbb{E}}[r_{\textbf{X}}r_{\textbf{I}}^{\top}]W\widehat{\mathbb{E}}[r_{\textbf{I}}r_{\textbf{X}}^{\top}]\right)^{-1},$$ where $\widehat{\mathbb{E}}$ denotes the empirical mean, and $r_{\textbf{Y}}\defeq \textbf{Y}-\widehat{\mathbb{E}}[\textbf{Y}|\textbf{B}]$ are the residuals after regressing \textbf{Y} on \textbf{B} (\'idem for \textbf{I} and \textbf{X}). We want to show that $\hat{\beta}\stackrel{\mathbb{P}}{\to}\beta$. Firstly, note that the Law of large numbers holds for covariance-stationary processes (see \cite[Prop. 7.5]{hamilton2020time}). Thus, in combination with Slutsky's Theorem, we know that: $$\hat{\beta}\stackrel{\mathbb{P}}{\to} {\mathbb{E}}[r_{{Y}}r_{\mathcal{I}}^{\top}]W {\mathbb{E}}[r_{\mathcal{I}}r_{\mathcal{X}}^{\top}]\left({\mathbb{E}}[r_{\mathcal{X}}r_{\mathcal{I}}^{\top}]W {\mathbb{E}}[r_{\mathcal{I}}r_{\mathcal{X}}^{\top}]\right)^{-1},$$ where $r_Y=Y-\mathbb{E}[Y|\mathcal{B}]$ (\'idem for $\mathcal{X,I}$). Now  we can rewrite: $${\mathbb{E}}[r_{{Y}}r_{\mathcal{I}}^{\top}]={\mathbb{E}}[(r_{{Y}}-\beta r_{\mathcal{X}})r_{\mathcal{I}}^{\top}]+\beta{\mathbb{E}}[r_{\mathcal{X}}r_{\mathcal{I}}^{\top}]=\beta{\mathbb{E}}[r_{\mathcal{X}}r_{\mathcal{I}}^{\top}],$$ as the first addend equals zero due to the conditional uncorrelation proven in \textit{(i)}. Therefore, $\hat{\beta}\stackrel{\mathbb{P}}{\to}\beta$, and this concludes the proof.
\end{proof}

\end{document}